\providecommand{\U}[1]{\protect\rule{.1in}{.1in}}
\newtheorem{theorem}{Theorem}
\newtheorem{corollary}[theorem]{Corollary}
\newtheorem{lemma}[theorem]{Lemma}
\newtheorem{proposition}[theorem]{Proposition}
\newtheorem{remark}[theorem]{Remark}
\begin{document}

\title[Semiclassical measures on the torus]{Semiclassical measures for the Schr\"odinger equation on the torus}
\author{Nalini Anantharaman}
\address{Universit\'{e} Paris-Sud 11, Math\'{e}matiques, B\^{a}t. 425, 91405 ORSAY
CEDEX, FRANCE}
\email{Nalini.Anantharaman@math.u-psud.fr}
\author{Fabricio Maci\`a}
\address{Universidad Polit\'{e}cnica de Madrid. DCAIN, ETSI Navales. Avda. Arco de la
Victoria s/n. 28040 MADRID, SPAIN}
\email{Fabricio.Macia@upm.es}
\thanks{F. Maci{\`a} was supported by grants MTM2007-61755, MTM2010-16467 (MEC) and
Santander-Complutense 34/07-15844. N. Anantharaman wishes to acknowledge the
support of Agence Nationale de la Recherche, under the grant ANR-09-JCJC-0099-01.}

\begin{abstract}
{}In this article, the structure of semiclassical
measures for solutions to the linear Schr\"{o}dinger
equation on the torus is analysed. We show that the disintegration of
such a measure on every invariant lagrangian torus is absolutely
continuous with respect to the Lebesgue measure. We obtain an
expression of the Radon-Nikodym derivative in terms of the
sequence of initial data and show that it satisfies an explicit
propagation law. As a consequence, we also prove an observability inequality, saying that the $L^2$-norm of a solution on any open subset of the torus controls the full $L^2$-norm.

\end{abstract}
\maketitle

\newcommand{\nwc}{\newcommand}
\nwc{\nwt}{\newtheorem}
\nwt{coro}{Corollary}
\nwt{ex}{Example}
\nwt{prop}{Proposition}
\nwt{defin}{Definition}


\nwc{\mf}{\mathbf} 
\nwc{\blds}{\boldsymbol} 
\nwc{\ml}{\mathcal} 


\nwc{\lam}{\lambda}
\nwc{\del}{\delta}
\nwc{\Del}{\Delta}
\nwc{\Lam}{\Lambda}
\nwc{\elll}{\ell}

\nwc{\IA}{\mathbb{A}} 
\nwc{\IB}{\mathbb{B}} 
\nwc{\IC}{\mathbb{C}} 
\nwc{\ID}{\mathbb{D}} 
\nwc{\IE}{\mathbb{E}} 
\nwc{\IF}{\mathbb{F}} 
\nwc{\IG}{\mathbb{G}} 
\nwc{\IH}{\mathbb{H}} 
\nwc{\IN}{\mathbb{N}} 
\nwc{\IP}{\mathbb{P}} 
\nwc{\IQ}{\mathbb{Q}} 
\nwc{\IR}{\mathbb{R}} 
\nwc{\IS}{\mathbb{S}} 
\nwc{\IT}{\mathbb{T}} 
\nwc{\IZ}{\mathbb{Z}} 
\def\bbbone{{\mathchoice {1\mskip-4mu {\rm{l}}} {1\mskip-4mu {\rm{l}}}
{ 1\mskip-4.5mu {\rm{l}}} { 1\mskip-5mu {\rm{l}}}}}
\def\bbleft{{\mathchoice {[\mskip-3mu {[}} {[\mskip-3mu {[}}{[\mskip-4mu {[}}{[\mskip-5mu {[}}}}
\def\bbright{{\mathchoice {]\mskip-3mu {]}} {]\mskip-3mu {]}}{]\mskip-4mu {]}}{]\mskip-5mu {]}}}}
\nwc{\setK}{\bbleft 1,K \bbright}
\nwc{\setN}{\bbleft 1,\cN \bbright}
 \newcommand{\Lim}{\mathop{\longrightarrow}\limits}


\nwc{\va}{{\bf a}}
\nwc{\vb}{{\bf b}}
\nwc{\vc}{{\bf c}}
\nwc{\vd}{{\bf d}}
\nwc{\ve}{{\bf e}}
\nwc{\vf}{{\bf f}}
\nwc{\vg}{{\bf g}}
\nwc{\vh}{{\bf h}}
\nwc{\vi}{{\bf i}}
\nwc{\vI}{{\bf I}}
\nwc{\vj}{{\bf j}}
\nwc{\vk}{{\bf k}}
\nwc{\vl}{{\bf l}}
\nwc{\vm}{{\bf m}}
\nwc{\vM}{{\bf M}}
\nwc{\vn}{{\bf n}}
\nwc{\vo}{{\it o}}
\nwc{\vp}{{\bf p}}
\nwc{\vq}{{\bf q}}
\nwc{\vr}{{\bf r}}
\nwc{\vs}{{\bf s}}
\nwc{\vt}{{\bf t}}
\nwc{\vu}{{\bf u}}
\nwc{\vv}{{\bf v}}
\nwc{\vw}{{\bf w}}
\nwc{\vx}{{\bf x}}
\nwc{\vy}{{\bf y}}
\nwc{\vz}{{\bf z}}
\nwc{\bal}{\blds{\alpha}}
\nwc{\bep}{\blds{\epsilon}}
\nwc{\barbep}{\overline{\blds{\epsilon}}}
\nwc{\bnu}{\blds{\nu}}
\nwc{\bmu}{\blds{\mu}}
\nwc{\bet}{\blds{\eta}}



\nwc{\bk}{\blds{k}}
\nwc{\bm}{\blds{m}}
\nwc{\bM}{\blds{M}}
\nwc{\bp}{\blds{p}}
\nwc{\bq}{\blds{q}}
\nwc{\bn}{\blds{n}}
\nwc{\bv}{\blds{v}}
\nwc{\bw}{\blds{w}}
\nwc{\bx}{\blds{x}}
\nwc{\bxi}{\blds{\xi}}
\nwc{\by}{\blds{y}}
\nwc{\bz}{\blds{z}}


\nwc{\cA}{\ml{A}}
\nwc{\cB}{\ml{B}}
\nwc{\cC}{\ml{C}}
\nwc{\cD}{\ml{D}}
\nwc{\cE}{\ml{E}}
\nwc{\cF}{\ml{F}}
\nwc{\cG}{\ml{G}}
\nwc{\cH}{\ml{H}}
\nwc{\cI}{\ml{I}}
\nwc{\cJ}{\ml{J}}
\nwc{\cK}{\ml{K}}
\nwc{\cL}{\ml{L}}
\nwc{\cM}{\ml{M}}
\nwc{\cN}{\ml{N}}
\nwc{\cO}{\ml{O}}
\nwc{\cP}{\ml{P}}
\nwc{\cQ}{\ml{Q}}
\nwc{\cR}{\ml{R}}
\nwc{\cS}{\ml{S}}
\nwc{\cT}{\ml{T}}
\nwc{\cU}{\ml{U}}
\nwc{\cV}{\ml{V}}
\nwc{\cW}{\ml{W}}
\nwc{\cX}{\ml{X}}
\nwc{\cY}{\ml{Y}}
\nwc{\cZ}{\ml{Z}}

\nwc{\fA}{\mathfrak{a}}
\nwc{\fB}{\mathfrak{b}}
\nwc{\fC}{\mathfrak{c}}
\nwc{\fD}{\mathfrak{d}}
\nwc{\fE}{\mathfrak{e}}
\nwc{\fF}{\mathfrak{f}}
\nwc{\fG}{\mathfrak{g}}
\nwc{\fH}{\mathfrak{h}}
\nwc{\fI}{\mathfrak{i}}
\nwc{\fJ}{\mathfrak{j}}
\nwc{\fK}{\mathfrak{k}}
\nwc{\fL}{\mathfrak{l}}
\nwc{\fM}{\mathfrak{m}}
\nwc{\fN}{\mathfrak{n}}
\nwc{\fO}{\mathfrak{o}}
\nwc{\fP}{\mathfrak{p}}
\nwc{\fQ}{\mathfrak{q}}
\nwc{\fR}{\mathfrak{r}}
\nwc{\fS}{\mathfrak{s}}
\nwc{\fT}{\mathfrak{t}}
\nwc{\fU}{\mathfrak{u}}
\nwc{\fV}{\mathfrak{v}}
\nwc{\fW}{\mathfrak{w}}
\nwc{\fX}{\mathfrak{x}}
\nwc{\fY}{\mathfrak{y}}
\nwc{\fZ}{\mathfrak{z}}


\nwc{\tA}{\widetilde{A}}
\nwc{\tB}{\widetilde{B}}
\nwc{\tE}{E^{\vareps}}
\nwc{\tk}{\tilde k}
\nwc{\tN}{\tilde N}
\nwc{\tP}{\widetilde{P}}
\nwc{\tQ}{\widetilde{Q}}
\nwc{\tR}{\widetilde{R}}
\nwc{\tV}{\widetilde{V}}
\nwc{\tW}{\widetilde{W}}
\nwc{\ty}{\tilde y}
\nwc{\teta}{\tilde \eta}
\nwc{\tdelta}{\tilde \delta}
\nwc{\tlambda}{\tilde \lambda}
\nwc{\ttheta}{\tilde \theta}
\nwc{\tvartheta}{\tilde \vartheta}
\nwc{\tPhi}{\widetilde \Phi}
\nwc{\tpsi}{\tilde \psi}
\nwc{\tmu}{\tilde \mu}

\nwc{\To}{\longrightarrow} 

\nwc{\ad}{\rm ad}
\nwc{\eps}{\epsilon}
\nwc{\ep}{\epsilon}
\nwc{\vareps}{\varepsilon}

\def\bom{\mathbf{\omega}}
\def\om{{\omega}}
\def\ep{\epsilon}
\def\tr{{\rm tr}}
\def\diag{{\rm diag}}
\def\Tr{{\rm Tr}}
\def\i{{\rm i}}
\def\mi{{\rm i}}
\def\e{{\rm e}}
\def\sq2{\sqrt{2}}
\def\sqn{\sqrt{N}}
\def\vol{\mathrm{vol}}
\def\defi{\stackrel{\rm def}{=}}
\def\t2{{\mathbb T}^2}
\def\s2{{\mathbb S}^2}
\def\hn{\mathcal{H}_{N}}
\def\shbar{\sqrt{\hbar}}
\def\A{\mathcal{A}}
\def\N{\mathbb{N}}
\def\T{\mathbb{T}}
\def\R{\mathbb{R}}
\def\RR{\mathbb{R}}
\def\Z{\mathbb{Z}}
\def\C{\mathbb{C}}
\def\O{\mathcal{O}}
\def\Sp{\mathcal{S}_+}
\def\Lap{\triangle}
\nwc{\lap}{\bigtriangleup}
\nwc{\rest}{\restriction}
\nwc{\Diff}{\operatorname{Diff}}
\nwc{\diam}{\operatorname{diam}}
\nwc{\Res}{\operatorname{Res}}
\nwc{\Spec}{\operatorname{Spec}}
\nwc{\Vol}{\operatorname{Vol}}
\nwc{\Op}{\operatorname{Op}}
\nwc{\supp}{\operatorname{supp}}
\nwc{\Span}{\operatorname{span}}

\nwc{\dia}{\varepsilon}
\nwc{\cut}{f}
\nwc{\qm}{u_\hbar}

\def\hto0{\xrightarrow{\hbar\to 0}}
\def\htoo{\stackrel{h\to 0}{\longrightarrow}}
\def\rto0{\xrightarrow{r\to 0}}
\def\rtoo{\stackrel{r\to 0}{\longrightarrow}}
\def\ntoinf{\xrightarrow{n\to +\infty}}

\providecommand{\abs}[1]{\lvert#1\rvert}
\providecommand{\norm}[1]{\lVert#1\rVert}
\providecommand{\set}[1]{\left\{#1\right\}}

\nwc{\la}{\langle}
\nwc{\ra}{\rangle}
\nwc{\lp}{\left(}
\nwc{\rp}{\right)}

\nwc{\bequ}{\begin{equation}}
\nwc{\be}{\begin{equation}}
\nwc{\ben}{\begin{equation*}}
\nwc{\bea}{\begin{eqnarray}}
\nwc{\bean}{\begin{eqnarray*}}
\nwc{\bit}{\begin{itemize}}
\nwc{\bver}{\begin{verbatim}}

%\nwc{\eal}{\end{align}}
\nwc{\eequ}{\end{equation}}
\nwc{\ee}{\end{equation}}
\nwc{\een}{\end{equation*}}
\nwc{\eea}{\end{eqnarray}}
\nwc{\eean}{\end{eqnarray*}}
\nwc{\eit}{\end{itemize}}
\nwc{\ever}{\end{verbatim}}

\newcommand{\defeq}{\stackrel{\rm{def}}{=}}

\section{Introduction}

Consider the torus $\mathbb{T}^{d}:=\left(
\mathbb{R/}2\pi\mathbb{Z}\right) ^{d}$ equipped with the standard
flat metric. We denote by $\Delta$ the associated Laplacian. We
are interested in understanding dynamical properties related to
propagation of singularities by the (time-dependent) linear
Schr\"{o}dinger equation
\[
i\frac{\partial u}{\partial t}(t,x)=\left( -\frac{1}{2}\Delta+V(t,
x)\right) u(t,x),\qquad u\rceil_{t=0}=u_0\in L^{2}(\mathbb{T}^{d}).
\]
More precisely, given a sequence of initial conditions $u_{n}\in
L^{2}(\mathbb{T}^{d})$, we shall investigate the regularity properties of the \emph{Wigner distributions }and \emph{semiclassical measures} associated with $u_n(t, x)$.
These describe how the $L^{2}$-norm is distributed in the cotangent bundle
$T^{\ast}\mathbb{T}^{d}=\mathbb{T}^{d}\times\mathbb{R}^{d}$ (position $\times$ frequency). Our main results, Theorems
\ref{t:main} and \ref{t:precise} below, provide a description of the
regularity properties and, more generally, the global structure of
semiclassical measures associated to sequences of solutions to the
Schr\"{o}dinger equation.

These results are aimed to give a description of the
high-frequency behavior of the linear Schr\"{o}dinger flow. This
aspect of the dynamics is particularly relevant in the study of
the quantum-classical correspondence principle, but is also
related to other dynamical properties such as dispersion and
unique continuation (see the discussion below and the articles
\cite{MaciaAv, MaciaDispersion, AnantharamanMaciaSurv} for a more
precise account and detailed references on these issues). As a
corollary of Theorem \ref{t:precise}, we prove an observability
inequality on any open subset of the torus, for the Schr\"odinger
equation with a time-independent potential~: Theorem
\ref{t:obs}.\medskip

We assume the following regularity condition on the potential
$V\in L^{\infty}\left(  \mathbb{R\times T}^{d}\right)  $~:

\smallskip {\bf (R)} For every $T>0$, for every $\eps>0$, there
exists a {\em compact} set $K_{\eps}\subset  [0, T]\times \T^d$,
of Lebesgue measure $<\eps$, and $V_\eps\in C([0, T]\times \T^d)$,
such that  $\left|V-V_\eps\right|\leq \eps$ on  $\left([0,
T]\times \T^d\right)\setminus K_{\eps}$.
\smallskip

We believe that this assumption should not be necessary. In any case, assumption (R) already covers a broad class of examples.

We shall focus on the propagator starting at time
$0$, denoted by $U_{V}(t)$; {\em i.e.} $u(t)=U_{V}(t)u_0$.

Let us define the notion of Wigner distribution. We will use the
semiclassical point of view, and denote by $(u_{h})$ our family of
initial conditions, where $h>0$ is a real parameter going to $0$. The parameter $h$ acts as a scaling factor
on the frequencies, and the limit $h\To0^{+}$ corresponds to the high-frequency regime.
We will always assume that the functions $u_{h}$ are normalized in
$L^{2}(\mathbb{T}^{d})$. The \emph{Wigner distribution} associated
to $u_{h}$ (at scale $h$) is a distribution on the cotangent
bundle $T^{\ast}\mathbb{T}^{d}$, defined by
\[
\int_{T^{\ast}\mathbb{T}^{d}}a(x,\xi)w_{u_{h}}^{h}(dx,d\xi)=\left\langle
u_{h},\Op_{h}(a)u_{h}\right\rangle _{L^{2}(\mathbb{T}^{d})},\qquad
\mbox{ for all }a\in C_{c}^{\infty}(T^{\ast}\mathbb{T}^{d}),
\]
where $\Op_{h}(a)$ is the operator on $L^{2}(\mathbb{T}^{d})$ associated to
$a$ by the Weyl quantization (Section \ref{s:PDO}). More explicitly, we have
\[
\int_{T^{\ast}\mathbb{T}^{d}}a(x,\xi)w_{u_{h}}^{h}(dx,d\xi)=\frac{1}{\left(
2\pi\right)  ^{d/2}}\sum_{k,j\in\Z^{d}}\hat{u}_{h}(k)\overline{\hat{u}_{h}%
(j)}\hat{a}_{j-k}\left(  \frac{h}{2}(k+j)\right)  ,
\]
where $\hat{u}_{h}(k):=\int_{\mathbb{T}^{d}}u_{h}(x)\frac{e^{-ik.x}}%
{(2\pi)^{d/2}}dx$ and $\hat{a}_{k}(\xi):=\int_{\mathbb{T}^{d}}a(x,\xi
)\frac{e^{-ik.x}}{(2\pi)^{d/2}}dx$ denote the respective Fourier coefficients
of $u_{h}$ and $a$, with respect to the variable $x\in\mathbb{T}^{d}$. We note
that, if $a$ is a function on $T^{\ast}\mathbb{T}^{d}=\mathbb{T}^{d}%
\times\R^{d}$ that depends only on the first coordinate, then
\begin{equation}
\int_{T^{\ast}\mathbb{T}^{d}}a(x)w_{u_{h}}^{h}(dx,d\xi)=\int_{\mathbb{T}^{d}%
}a(x)|u_{h}(x)|^{2}dx. \label{e:proj}%
\end{equation}

The main object of our study will be the Wigner distributions $w_{U_{V}%
(t)u_{h}}^{h}$. When no confusion arises, we will more simply denote them by
$w_{h}(t,\cdot)$. By standard estimates on the norm of $\Op_{h}(a)$ (the
Calder\'{o}n-Vaillancourt theorem, section \ref{s:PDO}), $t\mapsto
w_{h}(t,\cdot)$ belongs to $L^{\infty}(\R;\cD^{\prime}\left(  T^{\ast
}\mathbb{T}^{d}\right)  )$, and is uniformly bounded in that space as
$h\To0^{+}$. Thus, one can extract subsequences that converge in the
weak-$\ast$ topology on $L^{\infty}(\R;\cD^{\prime}\left(  T^{\ast}%
\mathbb{T}^{d}\right)  )$. In other words, after possibly extracting a
subsequence, we have
\[
\int_{\R}\varphi(t)a(x,\xi)w_{h}(t,dx,d\xi)dt\Lim_{h\To0}\int_{\R}%
\varphi(t)a(x,\xi)\mu(t,dx,d\xi)dt
\]
for all $\varphi\in L^{1}(\R)$ and $a\in C_{c}^{\infty}(T^{\ast}\mathbb{T}%
^{d}).$ It also follows from standard properties of the Weyl quantization that
the limit $\mu$ has the following properties~:

\begin{itemize}
\item $\mu\in L^{\infty}(\R;\cM_{+}(T^{\ast}\mathbb{T}^{d}))$, meaning that
for almost all $t$, $\mu(t,\cdot)$ is a positive measure on $T^{\ast
}\mathbb{T}^{d}$.

\item The unitary character of $U_{V}(t)$ implies that $\int_{T^{\ast
}\mathbb{T}^{d}}\mu(t,dx,d\xi)$ does not depend on $t$; from the normalization
of $u_{h}$, we have $\int_{T^{\ast}\mathbb{T}^{d}}\mu(t,dx,d\xi)\leq1$, the
inequality coming from the fact that $T^{\ast}\mathbb{T}^{d}$ is not compact,
and that there may be an escape of mass to infinity.

\item Define the geodesic flow $\phi_{\tau}:T^{\ast}\mathbb{T}^{d}\To
T^{\ast}\mathbb{T}^{d}$ by $\phi_{\tau}(x,\xi):=(x+\tau\xi,\xi)$ ($\tau\in
\R$). The Weyl quantization enjoys the following property~:
\begin{equation}
\left[-\frac12\Delta,
\Op_{h}(a)\right]=\frac1{ih}\Op_{h}(\xi\cdot\partial_x a).
\label{e:weylcomm}
\end{equation}
This implies that $\mu(t,\cdot)$ is invariant under $\phi_{\tau}$,
for almost all $t$ and all $\tau\in\R$ (the argument is recalled
in Lemma \ref{Lemma Inv}).

\end{itemize}

We refer to \cite{MaciaAv} for details. We can now state our first main
result, which deals with the regularity properties of the measures $\mu$.

\begin{theorem}
\label{t:main}(i) Let $\mu$ be a weak-$\ast$ limit of the family $w_{h}$.
Then, for almost all $t$, $\int_{\R^{d}}\mu(t,\cdot,d\xi)$ is an absolutely
continuous measure on $\mathbb{T}^{d}$.

(ii) In fact, the following stronger statement holds.
Let $\bar{\mu}$ be the measure on $\mathbb{R}^{d}$ image of $\mu
(t,\cdot)$ under the projection map $(x,\xi)\mapsto\xi$. Then $\bar{\mu}$ does
not depend on $t$.

For every bounded
measurable function $f$, and every $L^{1}$-function $\theta(t)$ write
\[
\int_{\mathbb{R}}\int_{\mathbb{T}^{d}\times\mathbb{R}^{d}}f(x,\xi)\mu(t,dx,d\xi)\theta
(t)dt=\int_{\mathbb{R}}\int_{\mathbb{R}^{d}}\left(
\int_{\mathbb{T}^{d}}f(x,\xi)\mu_{\xi }(t,dx)\right)
\bar{\mu}(d\xi)\theta(t)dt,
\]
where $\mu_{\xi}(t,\cdot)$ is the disintegration\footnote{When $\mu\left(
t,\cdot\right)  $ is a probability measure, $\mu_{\xi}(t,\cdot)$ is the
conditional law of $x$ knowing $\xi$, when the pair $(x,\xi)$ is distributed
according to $\mu(t,\cdot)$} of $\mu(t,\cdot)$ with respect to the variable
$\xi$. Then for $\bar{\mu}$-almost every $\xi$, the measure $\mu_{\xi}%
(t,\cdot)$ is absolutely continuous.
\end{theorem}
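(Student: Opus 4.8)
The plan is to analyze the structure of semiclassical measures using a two-microlocal / second-microlocal approach near each rational (resp. irrational) direction in frequency space, exploiting the fact that the Schrödinger flow on the torus, restricted to the level sets $\{\xi = \text{const}\}$, is a translation flow, and that the potential $V$ enters only as a lower-order perturbation. The key observation is that the free Schrödinger propagator $U_0(t) = e^{it\Delta/2}$ acts on frequency $k$ by the phase $e^{-it|k|^2/2}$, so the essential content concerns how wave packets at frequency $\xi = hk$ spread under the translation flow $\phi_\tau(x,\xi) = (x+\tau\xi, \xi)$ together with resonances among the $|k|^2$.

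First I would establish part (ii) — the $t$-independence of the pushforward $\bar\mu$ onto the $\xi$-variable — which is essentially formal: taking test functions $a(x,\xi) = b(\xi)$ depending only on $\xi$, the commutator identity \eqref{e:weylcomm} gives $[-\tfrac12\Delta, \Op_h(b(\xi))] = 0$, and one checks that the contribution of the potential $V$ to $\tfrac{d}{dt}\langle u_h(t), \Op_h(b)u_h(t)\rangle$ is $O(h)$ (since $\Op_h(b(\xi))$ commutes with multiplication operators up to order $h$), so $\int b(\xi)\,\mu(t,dx,d\xi)$ is constant in $t$. Then I would reduce part (i) to part: since the projection of $\mu(t,\cdot)$ onto $\mathbb T^d$ is $\int_{\mathbb R^d}\mu_\xi(t,\cdot)\,\bar\mu(d\xi)$, absolute continuity of each $\mu_\xi(t,\cdot)$ (for $\bar\mu$-a.e. $\xi$) immediately gives absolute continuity of the projection by Fubini. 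So everything comes down to the disintegrations $\mu_\xi(t,\cdot)$.

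The heart of the matter — and the step I expect to be the main obstacle — is proving absolute continuity of $\mu_\xi(t,\cdot)$ on $\mathbb T^d$. Here I would split $\mathbb R^d$ according to the rank of the rational relations satisfied by $\xi$: for $\xi$ in a totally irrational direction, the translation flow $x\mapsto x+\tau\xi$ is uniquely ergodic on $\mathbb T^d$, so $\phi_\tau$-invariance of $\mu(t,\cdot)$ forces $\mu_\xi(t,\cdot)$ to be (a multiple of) Lebesgue measure, hence absolutely continuous — this case is easy. The difficulty is the "resonant" directions, where $\xi$ lies in a proper rational subspace and the invariant measure can concentrate on a subtorus; there, $\phi_\tau$-invariance alone is far too weak. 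I would handle this by a second-microlocalization along the family of resonant sublattices $\Lambda \subset \mathbb Z^d$: on the region where the frequency is $\varepsilon$-close (at scale $h/\varepsilon$ or finer) to the subspace $\Lambda^\perp\otimes\mathbb R$, one obtains a two-microlocal measure whose evolution is governed by an effective Schrödinger operator $-\tfrac12\Delta_{\Lambda}$ on the subtorus (the "averaged" dynamics along the resonant directions), and one argues by induction on the dimension of the subtorus, the base case being the free dynamics on $\mathbb T^d$ in a genuinely irrational direction. Controlling the potential throughout this second-microlocal analysis is exactly where hypothesis (R) is used — one approximates $V$ by a continuous $V_\varepsilon$ off a small compact set so that the relevant commutators and propagation estimates survive the $h\to 0$ limit. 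Assembling the regular two-microlocal pieces over all resonant sublattices and passing to the limit $\varepsilon\to 0$ then yields absolute continuity of $\mu_\xi(t,\cdot)$ for $\bar\mu$-a.e. $\xi$.

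A cleaner way to organize the induction is to prove, by descending induction on $\dim$ of the primitive rational subspace $\Lambda$ containing the relevant frequencies, the statement: any semiclassical measure for the Schrödinger equation on $\mathbb T^{\dim\Lambda}$ (with an $L^\infty$ potential satisfying (R)) whose mass is carried by frequencies $\xi$ with no further rational relations has disintegrations over $\xi$ that are absolutely continuous — indeed Lebesgue on the relevant orbit closures. The decomposition of a general measure into these pieces uses the fact that the $\xi$-directions with a rational relation of given rank form a countable union of subspaces, and on the complement of a neighborhood of all lower-dimensional ones, the two-microlocal measure "sees" only the effective dynamics on the quotient torus. The potential-theoretic input (R) enters in showing that the effective, averaged operator is again of the same type, so that the inductive hypothesis applies. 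The main technical obstacle is thus twofold: (a) setting up the two-microlocal calculus carefully enough that the limiting objects solve the claimed effective equations, and (b) handling the non-smooth potential via the approximation in (R) uniformly through the induction.
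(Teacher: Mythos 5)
Your high-level strategy matches the paper's: decompose frequency space into resonant pieces $R_\Lambda$ indexed by primitive submodules $\Lambda\subset\mathbb Z^d$, second-microlocalize along $\Lambda^\perp$, observe that the two-microlocal pieces propagate under an effective (averaged) Schr\"odinger dynamics on the sub-torus $\mathbb T_\Lambda$, and iterate. The approximation of $V$ through condition (R) is also handled in the same spirit. But there are two genuine gaps.

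First, the mechanism that actually produces absolute continuity is never stated, and the one clue you give points in the wrong direction. You write that for resonant $\xi$ the disintegration $\mu_\xi(t,\cdot)$ should be ``Lebesgue on the relevant orbit closures'', but the orbit closure of $x\mapsto x+\tau\xi$ for $\xi\in R_\Lambda$ with $\Lambda\neq\{0\}$ is a \emph{proper} sub-torus $x_0+\mathbb T_{\Lambda^\perp}$ of $\mathbb T^d$; Lebesgue measure on it is \emph{singular} with respect to $dx$ on $\mathbb T^d$, which is exactly the bad scenario the theorem rules out. (You correctly flag that $\phi_\tau$-invariance alone is ``far too weak'' at resonant $\xi$ --- and indeed each such sub-torus carries an invariant ergodic measure singular in $\mathbb T^d$ --- so the concluding sentence contradicts your own diagnosis.) What actually drives absolute continuity in the paper is a structural fact about the compact two-microlocal piece $\tilde\mu_\Lambda$: it is represented by a positive \emph{trace-class operator-valued} measure $\tilde\rho_\Lambda(t,ds,d\sigma)$ on $T^*\mathbb T_{\Lambda^\perp}$ with values in $\mathcal L^1(L^2(\mathbb T_\Lambda))$, and for $a=a(x)$ one has
$\int a\,d\tilde\mu_\Lambda=\operatorname{Tr}\bigl(\int m_a\,T_\Lambda^*\tilde\rho_\Lambda\,T_\Lambda\bigr)$,
which is of the form $\int_{\mathbb T^d} a(x)\,f(x)\,dx$ with $f\in L^1$ (the diagonal of the trace-class kernel). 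The piece $\tilde\mu^\Lambda$ at infinity is not itself trace-class; it gains an extra invariance $\phi^1_\tau$ and is pushed one level further by the successive microlocalization of Section~\ref{s:successive}, terminating when $\operatorname{rk}\Lambda=0$. This trace-class/\,$L^1$-kernel mechanism is the crux of part (i) and is absent from your write-up.

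Second, you propose to prove (ii) directly and then read off (i), but the paper goes in the opposite order, and the step from (i) to (ii) is a nontrivial extra argument that you are implicitly claiming without providing. The structure theorem as developed gives absolute continuity of the \emph{projection} of $\mu(t,\cdot)$ onto $\mathbb T^d$, not of the conditional measures $\mu_\xi(t,\cdot)$ for individual $\xi$. To get those, the paper conditions $\mu(t,\cdot)$ on a filtration $\mathcal F_n$ of dyadic hypercubes in $\xi$-space, uses the martingale convergence theorem to write $\mu_\xi(t,\cdot)$ as a limit of normalized restrictions, realizes each restriction (up to a diagonal extraction in $h$) as the projection of a semiclassical measure of a modified sequence $v_{h_n}=\operatorname{Op}_{h_n}(\chi_n)u_{h_n}/\lVert\cdot\rVert$, and then applies (i) to that sequence (Section~\ref{s:martingale}). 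If you insist on proving (ii) first, you would instead need to disintegrate the operator-valued measure $\tilde\rho_\Lambda$ over $\sigma$ and do so compatibly across the whole tower of successive microlocalizations; this is not impossible, but it is a separate piece of work that your sketch does not supply. As written, the proposal establishes part (ii) for totally irrational $\xi$ by unique ergodicity, correctly identifies the resonant case as the obstacle, but then neither states the correct absolute-continuity mechanism nor provides the conditioning argument that would close the loop.
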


The first assertion in Theorem
\ref{t:main} may be restated in a simpler, concise way.

\begin{corollary}
\label{t:example}Let $(u_{n})$ be a sequence in $L^{2}(\mathbb{T}^{d})$, such
that $\norm{u_n}_{L^{2}(\mathbb{T}^{d})}=1$ for all $n$. Consider the sequence
of probability measures $\nu_{n}$ on $\mathbb{T}^{d}$, defined by
\begin{equation}
\nu_{n}(dx)=\left(  \int_{0}^{1}|U_{V}(t)u_{n}(x)|^{2}dt\right)  dx.
\label{probmes}%
\end{equation}
Let $\nu$ be any weak-$\ast$ limit of the sequence $(\nu_{n})$~: then $\nu$ is
absolutely continuous.
\end{corollary}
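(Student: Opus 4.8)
The plan is to regard $(u_{n})$ as a semiclassical family $(u_{h})$ at a suitably chosen scale $h=h_{n}\to0^{+}$, apply Theorem \ref{t:main}(i) to the resulting semiclassical measure, and then check that no mass escapes to infinity in the frequency variable when passing to the limit in $\nu_{n}\rightharpoonup\nu$ --- that last point being the only genuine difficulty. Since $\nu$ is a weak-$\ast$ limit of $(\nu_{n})$, first pass to a subsequence (not relabelled) along which $\nu_{n}\rightharpoonup\nu$. For \emph{any} bounded sequence in $L^{2}(\mathbb{T}^{d})$ there is a scale $h_{n}\to0^{+}$ making it $h_{n}$-oscillating, i.e. with $\lim_{R\to\infty}\limsup_{n\to\infty}\sum_{|k|\geq R/h_{n}}|\hat{u}_{n}(k)|^{2}=0$: choosing $R_{n}\in\mathbb{N}$ with $\sum_{|k|\geq R_{n}}|\hat{u}_{n}(k)|^{2}<1/n$ and setting $h_{n}:=\min(n^{-1},R_{n}^{-2})$ forces $h_{n}\to0$ and $h_{n}R_{n}\to0$, so that $R/h_{n}\geq R_{n}$ eventually for each fixed $R$. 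Fix such $h_{n}$, put $u_{h_{n}}:=u_{n}$, and, extracting a further subsequence, let $w_{h_{n}}=w^{h_{n}}_{U_{V}(t)u_{h_{n}}}$ converge weak-$\ast$ to some $\mu\in L^{\infty}(\mathbb{R};\mathcal{M}_{+}(T^{\ast}\mathbb{T}^{d}))$, as in the discussion preceding Theorem \ref{t:main}.

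Next I would pass to the limit in a frequency-localised form of \eqref{e:proj}. Fix $a\in C^{\infty}(\mathbb{T}^{d})$ and a cut-off $\chi_{R}\in C_{c}^{\infty}(\mathbb{R}^{d})$, $0\leq\chi_{R}\leq1$, $\chi_{R}\equiv1$ on $\{|\xi|\leq R\}$. Since multiplication by $a$ is $\Op_{h_{n}}(a(x))$, the symbolic calculus gives $\Op_{h_{n}}(a(x))=\Op_{h_{n}}(a(x)\chi_{R}(\xi))+a\,(1-\chi_{R})(h_{n}D)+O_{R}(h_{n})$ in operator norm; combining this with \eqref{e:proj} and the definition of $w_{h_{n}}$ (note $a(x)\chi_{R}(\xi)\in C_{c}^{\infty}(T^{\ast}\mathbb{T}^{d})$) yields
\[\int_{\mathbb{T}^{d}}a\,d\nu_{n}=\int_{0}^{1}\!\!\int_{T^{\ast}\mathbb{T}^{d}}a(x)\chi_{R}(\xi)\,w_{h_{n}}(t,dx,d\xi)\,dt\ +\ r_{n}(R)\ +\ O_{R}(h_{n}),\]
where $r_{n}(R)=\int_{0}^{1}\langle U_{V}(t)u_{n},\,a\,(1-\chi_{R})(h_{n}D)\,U_{V}(t)u_{n}\rangle\,dt$. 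Writing $M_{n}^{R}(t):=\langle U_{V}(t)u_{n},\,(1-\chi_{R})(h_{n}D)\,U_{V}(t)u_{n}\rangle$, which is $\geq0$ because $(1-\chi_{R})(h_{n}D)$ acts as $1-\chi_{R}(h_{n}k)\in[0,1]$ on $e^{ik\cdot x}$, Cauchy--Schwarz gives $|r_{n}(R)|\leq\|a\|_{\infty}\int_{0}^{1}\sqrt{M_{n}^{R}(t)}\,dt\leq\|a\|_{\infty}\bigl(\int_{0}^{1}M_{n}^{R}(t)\,dt\bigr)^{1/2}$. Letting $n\to\infty$ (the first term converges by the weak-$\ast$ convergence $w_{h_{n}}\rightharpoonup\mu$ tested against $\mathbf{1}_{[0,1]}(t)a(x)\chi_{R}(\xi)$), then $R\to\infty$ (dominated convergence, $\mu(t,\cdot)\geq0$ of total mass $\leq1$), and using $\nu_{n}\rightharpoonup\nu$, one obtains $\int_{\mathbb{T}^{d}}a\,d\nu=\int_{0}^{1}\int_{T^{\ast}\mathbb{T}^{d}}a(x)\,\mu(t,dx,d\xi)\,dt$, \emph{provided}
\[(\star)\qquad\lim_{R\to\infty}\ \limsup_{n\to\infty}\ \int_{0}^{1}M_{n}^{R}(t)\,dt=0.\]

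The crux is $(\star)$, which says that $h_{n}$-oscillation of $(u_{n})$ is inherited by the solutions $U_{V}(t)u_{n}$, uniformly for $t\in[0,1]$. Writing $v_{n}(t)=U_{V}(t)u_{n}$ and $q=1-\chi_{R}$, and differentiating $M_{n}^{R}(t)=\langle v_{n}(t),q(h_{n}D)v_{n}(t)\rangle$ along $i\partial_{t}v_{n}=(-\tfrac12\Delta+V)v_{n}$, the kinetic contribution vanishes (as $q(h_{n}D)$ commutes with $\Delta$) and one is left with $|\tfrac{d}{dt}M_{n}^{R}(t)|\leq\bigl\|[\,q(h_{n}D),V\,]\bigr\|$. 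For smooth $V$ the symbolic calculus gives $\|[\,q(h_{n}D),V\,]\|=O_{R}(h_{n})$, so for $t\in[0,1]$
\[M_{n}^{R}(t)\ \leq\ \langle u_{n},q(h_{n}D)u_{n}\rangle+O_{R}(h_{n})\ \leq\ \sum_{|k|\geq R/h_{n}}|\hat{u}_{n}(k)|^{2}+O_{R}(h_{n}),\]
and taking $\limsup_{n}$, then $R\to\infty$, and invoking the $h_{n}$-oscillation of $(u_{n})$ yields $(\star)$. For $V=0$ this is immediate, since $|\widehat{U_{0}(t)u_{n}}(k)|=|\hat{u}_{n}(k)|$. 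For $V$ satisfying only (R), I would approximate $V$ by a continuous and then a smooth potential and absorb the error using the smallness of $|V-V_{\eps}|$ off a set of small measure --- exactly the sort of approximation assumption (R) is designed to license, and which is carried out in the analysis behind Theorems \ref{t:main} and \ref{t:precise}.

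Finally, by Theorem \ref{t:main}(i), for almost every $t$ the measure $\int_{\mathbb{R}^{d}}\mu(t,\cdot,d\xi)$ is absolutely continuous; write it as $\rho_{t}(x)\,dx$ with $\rho_{t}\geq0$ and $\int_{\mathbb{T}^{d}}\rho_{t}\,dx\leq1$. The identity above then becomes $\int_{\mathbb{T}^{d}}a\,d\nu=\int_{\mathbb{T}^{d}}a(x)\bigl(\int_{0}^{1}\rho_{t}(x)\,dt\bigr)dx$, and by Tonelli $\rho:=\int_{0}^{1}\rho_{t}\,dt$ lies in $L^{1}(\mathbb{T}^{d})$; since this holds for all $a\in C^{\infty}(\mathbb{T}^{d})$, we conclude that $\nu=\rho\,dx$ is absolutely continuous. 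I expect the main obstacle to be precisely $(\star)$: controlling, uniformly in time, the high-frequency mass of the solution under a potential merely satisfying (R) --- trivial for $V=0$, a one-line energy estimate for smooth $V$, but requiring in general the approximation machinery developed for the main theorems.
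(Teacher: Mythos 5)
Your approach is essentially the paper's: choose a scale $h_n$ making $(u_n)$ $h_n$-oscillating so that no mass escapes to infinity at $t=0$, argue that this persists for $t\in[0,1]$, and then apply Theorem~\ref{t:main}(i). The paper executes the first step by truncating $u_n$ to Fourier modes $|k|\leq h_n^{-1}$ (so the time-zero Wigner distribution is compactly supported), and then invokes the $t$-independence of the total mass of $\mu(t,\cdot)$ (a fact deferred to \cite{MaciaAv}) rather than proving a uniform-in-$t$ high-frequency estimate. Your proof of $(\star)$ by differentiating $M_n^R(t)$ is correct for smooth $V$ and adapts cleanly to continuous $V$; it makes explicit what the paper leaves implicit, which is useful.

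The one point you should be careful about is your sketch for potentials satisfying only (R): if you ``absorb the error using the smallness of $|V-V_\eps|$ off a set of small measure'' the way \S\ref{s:linfty} does (bounding $\int_0^T\norm{(1-\chi_\eps)U_V(t)u_n}^2dt$ via absolute continuity of the spatial limit), you are invoking precisely Corollary~\ref{t:example}, which is what you are trying to prove, and the argument becomes circular. The paper's route avoids this by establishing the full-mass property via the initial-data semiclassical measure $\mu_0$ and the (separately justified) constancy of total mass, rather than via a direct time-uniform energy estimate on $(1-\chi_R)(h_nD)U_V(t)u_n$. So either justify $(\star)$ for (R) potentials without appealing to the conclusion, or follow the paper's structure and reduce to the $t$-independence of the $\xi$-marginal's total mass.
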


Our next result enlightens the structure of the set of semiclassical
measures arising as weak-$\ast$ limits of sequences $\left(  w_{h}\right)  $.
It gives a description of the Radon-Nikodym derivatives of the measures
$\int_{\R^{d}}\mu(t,\cdot,d\xi)$ and clarifies the link between $\mu(0,\cdot)$
and $\mu(t,\cdot)$. It was already noted in \cite{MaciaAv} (in the case $V=0$)
that the dependence of $\mu(t,\cdot)$ on the sequence of initial conditions is
a subtle issue~: although $w_{h}(0,\cdot)=w_{u_{h}}^{h}$ completely determines
$w_{h}(t,\cdot)=w_{U_{V}(t)u_{h}}^{h}$ for all $t$, it is not true that the
weak-$\ast$ limits of $w_{h}(0,\cdot)$ determine $\mu(t,\cdot)$ for all $t$.
In \cite{MaciaAv}, one can find examples of two sequences $(u_{h})$ and
$(v_{h})$ of initial conditions, such that $w_{u_{h}}^{h}$ and $w_{v_{h}}^{h}$
have the same limit in $\cD^{\prime}(T^{\ast}\mathbb{T}^{d})$, but
$w_{U_{V}(t)u_{h}}^{h}$ and $w_{U_{V}(t)v_{h}}^{h}$ have different limits in
$L^{\infty}(\R;\cD^{\prime}(T^{\ast}\mathbb{T}^{d}))$.

In order to state Theorem \ref{t:precise}, we must introduce some notation. We
call a submodule $\Lambda\subset\Z^{d}$ primitive if $\left\langle
\Lambda\right\rangle \cap\mathbb{Z}^{d}=\Lambda$ (where $\left\langle
\Lambda\right\rangle $ denotes the linear subspace of $\mathbb{R}^{d}$ spanned
by $\Lambda$). If $b$ is a function on $\mathbb{T}^{d}$, let $\widehat{b}_{k}$,
$k\in\mathbb{Z}^{d}$, denote the Fourier coefficients of $b$. If $\widehat{b}_{k}=0$ for $k\not \in \Lambda$, we will say that $b$ has only Fourier modes
in $\Lambda$. This means that $b$ is constant in the directions orthogonal to
$\left\langle \Lambda\right\rangle $. Let $L_{\Lambda}^{p}(\IT^{d})$ denote
the subspace of $L^{p}\left(  \mathbb{T}^{d}\right)  $ consisting of functions
with Fourier modes in $\Lambda$.  If $b\in L^2(\T^d)$, we denote by
$\left\langle b\right\rangle _{\Lambda}$ its orthogonal projection onto $L_{\Lambda}^{2}(\IT^{d})$, in other words, the average of $b$ along
$\Lambda^{\bot}$~:%
\[
\left\langle b\right\rangle _{\Lambda}\left(  x\right)  :=\sum_{k\in\Lambda
}\widehat{b}_{k}\left(  t\right)  \frac{e^{ik\cdot x}}{\left(  2\pi\right)
^{d/2}}.
\]
Given $b\in L_{\Lambda}^{\infty}\left(
\mathbb{T}^{d}\right)  $, we will denote by $m_{b}$ the multiplication
operator by $b$, acting on $L_{\Lambda}^{2}(\IT^{d})$.

Finally, we denote by $U_{\la V\ra_{\Lambda}}(t)$ the unitary
propagator of the equation
\[
i\frac{\partial v}{\partial t}(t,x)=\left(  -\frac{1}{2}\Delta+\la
V\ra_{\Lambda }(t,x)\right)  v(t,x),\qquad v\rceil_{t=0}\in
L_{\Lambda}^{2}(\mathbb{T}^{d}).
\]

\begin{theorem}
\label{t:precise} For any sequence $(u_{h})$, we can extract a subsequence
such that the following hold~:

\begin{itemize}
\item the subsequence $w_{h}(t,\cdot)$ converges weakly-$\ast$ to a limit
$\mu(t,\cdot)$;

\item for each primitive submodule $\Lambda\subset\Z^{d}$, we can build from
the sequence of initial conditions $(u_{h})$ a nonnegative trace class
operator $\sigma_{\Lambda}$, acting on $L_{\Lambda}^{2}(\IT^{d})$%
;\footnote{This means that the integral kernel of
$\sigma_{\Lambda}$ is constant in the directions orthogonal to $\Lambda$.}

\item for almost all $t$, we have
\[
\int_{\R^{d}}\mu(t,\cdot,d\xi)=\sum_{\Lambda}\nu_{\Lambda}(t,\cdot),
\]
where $\nu_{\Lambda}(t,\cdot)$ is the measure on $\mathbb{T}^{d}$, whose
non-vanishing Fourier modes correspond to frequencies in $\Lambda$, defined
by
\[
\int_{\mathbb{T}^{d}}b(x)\nu_{\Lambda}(t,dx)=\Tr\left(  m_{\left\langle
b\right\rangle _{\Lambda}}\,U_{\la V\ra_{\Lambda}}(t)\,\sigma_{\Lambda
}\,{U_{\la V\ra_{\Lambda}}(t)}^{\ast}\right)  ,
\]
if $b\in L^{\infty}\left(  \mathbb{T}^{d}\right)  $.
\end{itemize}
\end{theorem}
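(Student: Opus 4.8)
The plan is to carry out a two-microlocal (second-microlocal) analysis of the Wigner distributions $w_h(t,\cdot)$ along the family of resonant directions attached to the primitive submodules $\Lambda\subset\Z^d$, and to show that on each such stratum the limit object obeys a Heisenberg evolution governed by the averaged Hamiltonian $-\tfrac12\Delta+\langle V\rangle_\Lambda$ on $L^2_\Lambda(\T^d)$. The first, purely geometric, step fixes the shape of the decomposition. For a primitive submodule $\Lambda$ set $I_\Lambda:=\{\xi\in\R^d:\ \Z^d\cap\xi^\perp=\Lambda\}$; the $I_\Lambda$ are Borel, pairwise disjoint, cover $\R^d$, and $I_\Lambda\subset\langle\Lambda\rangle^\perp$. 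By Lemma \ref{Lemma Inv}, $\mu(t,\cdot)$ is invariant under $\phi_\tau$ for a.e.\ $t$; since for $\xi\in I_\Lambda$ the closure of the orbit $\{(x+\tau\xi,\xi):\tau\in\R\}$ is the subtorus $\{x+v:\ v\in\langle\Lambda\rangle^\perp\}$ (a rational subspace, as it is cut out by integer equations), the conditional measures $\mu_\xi(t,\cdot)$ for $\xi\in I_\Lambda$ are invariant under translation by $\langle\Lambda\rangle^\perp$. Hence $\nu_\Lambda(t,\cdot):=\int_{I_\Lambda}\mu(t,\cdot,d\xi)$ has all its Fourier modes in $\langle\Lambda\rangle\cap\Z^d=\Lambda$, and $\int_{\R^d}\mu(t,\cdot,d\xi)=\sum_\Lambda\nu_\Lambda(t,\cdot)$ with $\int_{\T^d}b\,\nu_\Lambda(t,dx)=\int_{\T^d}\langle b\rangle_\Lambda\,\nu_\Lambda(t,dx)$ for every $b\in L^\infty(\T^d)$. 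The real content is to identify each $\nu_\Lambda$.

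\textbf{Second-microlocal operators and their propagation.} For each $\Lambda$ I would build a second-microlocal Wigner distribution resolving, at the intermediate scale, the frequency component along $\langle\Lambda\rangle$: writing a semiclassical frequency as $\xi=\xi''+\xi'$ with $\xi''\in\langle\Lambda\rangle^\perp$ and $\xi'\in\langle\Lambda\rangle$, the regime relevant to the $\Lambda$-stratum is $|\xi''|\asymp1$ while $P_{\langle\Lambda\rangle}k=\xi'/h$ stays bounded; the complementary regimes (where $P_{\langle\Lambda\rangle}k$ is unbounded but $o(1/h)$, resp.\ $\asymp1/h$) are reabsorbed into strictly finer submodules, resp.\ into the ``generic'' stratum, and keeping track of this forces an induction on $\dim\langle\Lambda\rangle$. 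Extracting a limit of the tube-localized Wigner distributions produces a measure $\xi''\mapsto M_\Lambda(t,d\xi'')$ on $\langle\Lambda\rangle^\perp$ valued in nonnegative trace-class operators on $L^2_\Lambda(\T^d)$, trace class with $\int\Tr M_\Lambda(t,d\xi'')\le\limsup_h\|u_h\|_{L^2}^2=1$ by positivity (sharp Gårding) and the $L^\infty(\R;\cM_+)$ bound on $\mu$. The propagation law comes from conjugating $U_V(t)$ by the free propagator: on the Fourier side the coupling of $k$ to $k+\ell$ carries $e^{it(|k+\ell|^2-|k|^2)/2}\widehat{V}_\ell(t)=e^{it(2k\cdot\ell+|\ell|^2)/2}\widehat{V}_\ell(t)$, and since $k\cdot\ell=(P_{\langle\Lambda\rangle}k)\cdot\ell+(P_{\langle\Lambda\rangle^\perp}k)\cdot\ell$ with $P_{\langle\Lambda\rangle^\perp}k\asymp\xi''/h$, this phase oscillates rapidly in $t$ unless $\xi''\cdot\ell=0$, i.e.\ unless $\ell\in\Lambda$; by the averaging principle the modes $\ell\notin\Lambda$ of $V$ drop out, so $V$ is replaced by $\langle V\rangle_\Lambda$, while for $\ell\in\Lambda$ the surviving phase $e^{it(2(P_{\langle\Lambda\rangle}k)\cdot\ell+|\ell|^2)/2}$ depends only on the bounded variable $P_{\langle\Lambda\rangle}k$ and, on the tube profile (a copy of $L^2_\Lambda(\T^d)$, up to the momentum twist addressed below), is the one generated by $-\tfrac12\Delta$. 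One thus obtains $i\partial_tM_\Lambda=[-\tfrac12\Delta+\langle V\rangle_\Lambda,\,M_\Lambda]$ weakly, i.e.\ $M_\Lambda(t,\cdot)=U_{\langle V\rangle_\Lambda}(t)\,M_\Lambda(0,\cdot)\,U_{\langle V\rangle_\Lambda}(t)^{\ast}$.

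\textbf{Assembly and the rough potential.} Set $\sigma_\Lambda:=\int_{\langle\Lambda\rangle^\perp}M_\Lambda(0,d\xi'')$, a nonnegative trace-class operator on $L^2_\Lambda(\T^d)$ manufactured from $(u_h)$. Since the only matrix elements of $m_{\langle b\rangle_\Lambda}$ that enter couple modes differing by an element of $\Lambda$, one integrates out $\xi''$, uses \eqref{e:proj} together with the $\phi_\tau$-invariance to pass from $b$ to $\langle b\rangle_\Lambda$, and arrives at $\int_{\T^d}b\,\nu_\Lambda(t,dx)=\Tr\big(m_{\langle b\rangle_\Lambda}\,U_{\langle V\rangle_\Lambda}(t)\,\sigma_\Lambda\,U_{\langle V\rangle_\Lambda}(t)^{\ast}\big)$ for $b\in L^\infty(\T^d)$, which is the asserted formula; as the right-hand side defines an absolutely continuous measure of $x$, Theorem \ref{t:main} drops out as a by-product. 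Finally, the smoothness of $V$ is removed by running the whole argument first for the continuous approximants $V_\eps$ of hypothesis \textbf{(R)} and then letting $\eps\to0$: by Duhamel and $|V-V_\eps|\le\eps$ off a compact set of measure $<\eps$, $U_V(t)-U_{V_\eps}(t)\to0$ in the strong operator topology uniformly on compact time intervals, and likewise $U_{\langle V_\eps\rangle_\Lambda}(t)\to U_{\langle V\rangle_\Lambda}(t)$, which suffices to pass all the limits.

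\textbf{Main obstacle.} The crux is the second-microlocal step and the bookkeeping surrounding it. One must: (a) make the ``$P_{\langle\Lambda\rangle}k$ bounded'' localization precise through a basis of $\Z^d$ adapted to $\Lambda$ (Smith normal form), for $\langle\Lambda\rangle^\perp\cap\Z^d$ does not generally complement $\Lambda$ in $\Z^d$, so that the profile carried by a single tube is a twisted copy of $L^2_\Lambda(\T^d)$, and one has to verify that integrating over the transverse variable $\xi''$ reassembles these pieces into a genuine trace-class operator on the untwisted $L^2_\Lambda(\T^d)$; (b) prove the averaging/normal-form lemma using only regularity \textbf{(R)} — this is exactly where the compact bad set of small measure is exploited; and, most delicately, (c) run the induction on $\dim\langle\Lambda\rangle$ so that the contributions of the complementary regimes are attributed to exactly one stratum each. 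Establishing this last ``no double counting'' decomposition — i.e.\ the asymptotic orthogonality of the frequency cutoffs attached to distinct submodules and the fact that the leftover of each stratum feeds precisely the finer ones — is where the real difficulty lies.
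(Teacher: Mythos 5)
Your overall blueprint — two-microlocal localization along $\Lambda^\perp$, operator-valued limits, a Heisenberg propagation law for them, and an induction over chains of nested submodules to deal with the "mass escaping to $\eta=\infty$" — matches the structure of the paper's proof (Sections \ref{s:second}--\ref{s:propagation}). Two places, however, call for comment, one a genuine difference of method and one a genuine gap.

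\textbf{Different route for the averaging $V\rightsquigarrow\langle V\rangle_\Lambda$.} You propose to kill the Fourier modes $\ell\notin\Lambda$ of $V$ by a non-stationary-phase argument in $t$, exploiting the factor $e^{it(P_{\Lambda^\perp}k)\cdot\ell}$ with $P_{\Lambda^\perp}k\asymp\xi''/h$. The paper instead obtains the reduced Hamiltonian purely by a \emph{spatial} symmetry: Lemma \ref{l:Cinfty} first yields the propagation law with the full $V(t,\pi_\Lambda(s,y))$, and Proposition \ref{p:average} then replaces $V$ by $\langle V\rangle_\Lambda$ using the fact that $\tilde\rho_\Lambda(t,\cdot)\rceil_{\T_{\Lambda^\perp}\times R_\Lambda}$ is invariant under $s\mapsto s+v$ for all $v\in\Lambda^\perp$ (this is Lemma \ref{LemmaFourierInvariant} at the level of the operator-valued measure), so that integrating in $s$ amounts to averaging $V$ over $\Lambda^\perp$. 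The spatial route is cleaner because it works after the limit $h\to0$ has been taken and requires no uniform control of $t$-oscillatory integrals along the $h$-dependent sequence. Your time-averaging would have to be made quantitative and survive the double limit $h\to0$, $R\to\infty$; this is not impossible but is not a drop-in replacement and you have not justified it.

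\textbf{Genuine gap: the passage to rough $V$ under (R).} Your claim that Duhamel and $|V-V_\eps|\le\eps$ off a compact set $K_\eps$ of measure $<\eps$ give $U_V(t)-U_{V_\eps}(t)\to0$ in the strong operator topology is not warranted. On $K_\eps$ the difference $V-V_\eps$ is only $O(1)$, and the strong-topology estimate would require $\|(V-V_\eps)\,U_{V_\eps}(s)u\|_{L^2}\to0$ for each fixed $u$ uniformly on compacts in $s$; nothing forces $U_{V_\eps}(s)u$ to put little $L^2$-mass on the ($\eps$-dependent, moving) bad set. Moreover, even if some form of strong convergence held, it is pointwise in the initial datum and hence does not pass to the two-microlocal limits, which are taken along the \emph{sequence} $u_h$. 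The paper's argument in \S\ref{s:linfty} is of an entirely different nature and is essential: it first establishes Theorem \ref{t:main}(i) (absolute continuity of the time-averaged density) from the static decomposition of Sections \ref{s:second}--\ref{s:successive}, and then uses this, via Cauchy--Schwarz, to show that $\int_0^T|\theta(t)|\,\|(1-\chi_\eps)u_h(t)\|^2\,dt$ converges as $h\to0$ to $\int_0^T\int_{\T^d}|\theta|\,|1-\chi_\eps|^2\,\nu_t(dx)\,dt$, a quantity that vanishes as $\eps\to0$ precisely because $\nu_t$ is absolutely continuous. Without this bootstrapping through absolute continuity, the contribution of the small-measure bad set is not controllable.

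\textbf{On the bookkeeping.} You correctly identify the induction on $\dim\langle\Lambda\rangle$ and the "no double counting" between strata as the main technical difficulty, but you do not carry it out and your displayed formula $\sigma_\Lambda=\int_{\langle\Lambda\rangle^\perp}M_\Lambda(0,d\xi'')$ reads as a single-step object. In the paper, $\sigma_\Lambda$ is the \emph{sum over all strictly decreasing chains} $\Lambda_1\supset\cdots\supset\Lambda_k\supset\Lambda$ of the corresponding operator-valued measures $\tilde\rho_\Lambda^{\Lambda_1\cdots\Lambda_k}(0,\cdot)$ produced by iterating the second microlocalization on the "at infinity" parts $\tilde\mu^{\Lambda_1\cdots\Lambda_k}$; this iteration is exactly how the complementary regimes you mention are redistributed without overlap, and it terminates because the ranks strictly decrease. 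This part of your outline is therefore not a wrong idea, but it is an outline of the statement of the difficulty rather than a proof.

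Finally, a small remark: you appeal to \eqref{e:proj} and $\phi_\tau$-invariance to replace $b$ by $\langle b\rangle_\Lambda$; what is actually used in the paper is Lemma \ref{LemmaFourierInvariant}, i.e.\ the invariance of $\mu\rceil_{\T^d\times R_\Lambda}$ (resp. of $\tilde\rho_\Lambda\rceil_{\T_{\Lambda^\perp}\times R_\Lambda}$) under all translations by $\Lambda^\perp$, not merely the flow invariance.
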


Theorem \ref{t:precise} tells us more about the dependence of
$\mu(t,\cdot)$ with respect to $t$. If two sequences of initial
conditions $(u_{h})$ and $(v_{h})$ give rise to the same family of
operators $\sigma_{\Lambda}$, then they also give rise to the same
limit $\mu(t,\cdot)$. There are cases in which the measures
$\nu_{\Lambda}$ can be determined from the semiclassical measure
$\mu(0,\cdot)$ of the sequence of initial data~: in Corollary
\ref{c:vuLzero} in Section \ref{endthm1} we show that if
$\mu(0,\mathbb{T}^d\times\Lambda^{\bot})=0$ then $\nu_{\Lambda}$
vanishes identically.

Technically speaking, the operators $\sigma_{\Lambda}$ are built
in terms of $2$-microlocal semiclassical measures, that describe
how the sequences $\left( u_{h}\right)  $ concentrate along
certain coisotropic manifolds in phase-space. The technical construction of
$\sigma_{\Lambda}$ will only be achieved at the end of Section
\ref{s:propagation}.

We shall prove, as a consequence of Theorem \ref{t:precise}, the following result:

\begin{theorem}
\label{t:obs}Suppose $V\in L^\infty(\T^d)$ does not depend on time and satisfies condition (R). Then for every open set $\omega\subset
\mathbb{T}^{d}$ and every $T>0$ there exists a constant $C=C(T, \omega)>0$ such that:%
\begin{equation}
\left\Vert u_{0}\right\Vert _{L^{2}\left(  \mathbb{T}^{d}\right)  }^{2}\leq
C\int_{0}^{T}\left\Vert U_{V}\left(  t\right)  u_{0}\right\Vert _{L^{2}\left(
\omega\right)  }^{2}dt, \label{e:oi}%
\end{equation}
for every initial datum $u_{0}\in L^{2}\left(  \mathbb{T}^{d}\right)  $.
\end{theorem}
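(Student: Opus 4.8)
\textbf{Proof proposal for Theorem \ref{t:obs}.}

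The plan is to argue by contradiction in the usual way for observability estimates. Suppose \eqref{e:oi} fails for some open set $\omega$ and some $T>0$. Then there exists a sequence of initial data $(u_n)$ with $\norm{u_n}_{L^2(\T^d)}=1$ but $\int_0^T \norm{U_V(t)u_n}_{L^2(\omega)}^2\,dt \to 0$. Viewing this as a sequence $u_h=u_n$ in the semiclassical setting (with, say, $h=1/n$, so that no genuine oscillation is forced and the relevant Wigner measures may concentrate at $\xi=0$), extract a subsequence so that the conclusions of Theorem \ref{t:precise} hold, producing a limit $\mu(t,\cdot)$ and operators $\sigma_\Lambda$. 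By \eqref{e:proj}, passing to the limit in $\int_0^T\int_\omega |U_V(t)u_h|^2\,dt$ gives $\int_0^T \int_{T^*\T^d} \mathbf{1}_\omega(x)\,\mu(t,dx,d\xi)\,dt = 0$, hence $\int_{\R^d}\mu(t,\cdot,d\xi)$ gives no mass to $\omega$ for a.e.\ $t$. Writing $\int_{\R^d}\mu(t,\cdot,d\xi)=\sum_\Lambda \nu_\Lambda(t,\cdot)$, and noting each $\nu_\Lambda\geq 0$, we deduce $\nu_\Lambda(t,\omega)=0$ for every primitive submodule $\Lambda$ and a.e.\ $t$. The goal is then to show that this forces the total mass $\sum_\Lambda \nu_\Lambda(t,\T^d)$ to be $0$, which contradicts the fact that it should equal $1$ (a point requiring care: one must rule out escape of mass to $\xi=\infty$; since $V$ is bounded and $u_n$ has unit norm, no mass can escape, because $-\frac12\Delta + V$ has discrete spectrum and one may first reduce to a fixed spectral window by an Egorov/compactness argument, or invoke that for this sequence the Wigner measures live over a compact frequency set — this is a standard point I would dispatch using boundedness of $V$ and finiteness of the spectral measure).

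The heart of the matter is a unique continuation statement for the propagated measures $\nu_\Lambda$: if $\nu_\Lambda(t,\omega)=0$ for all $t$ in a set of positive measure, then $\sigma_\Lambda=0$. Fix $\Lambda$. The defining formula $\int b\,d\nu_\Lambda(t) = \Tr\!\left(m_{\langle b\rangle_\Lambda}\, U_{\langle V\rangle_\Lambda}(t)\,\sigma_\Lambda\, U_{\langle V\rangle_\Lambda}(t)^*\right)$ shows that $\nu_\Lambda(t,\cdot)$ is, up to averaging along $\Lambda^\perp$, the density $\rho_\Lambda(t,x)$ of the (time-dependent, mixed) state $U_{\langle V\rangle_\Lambda}(t)\sigma_\Lambda U_{\langle V\rangle_\Lambda}(t)^*$, which solves a von Neumann equation on $L^2_\Lambda(\T^d)$ with Hamiltonian $-\frac12\Delta + \langle V\rangle_\Lambda$. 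The condition says $\langle \mathbf{1}_\omega\rangle_\Lambda$ integrated against this density vanishes; since $\langle \mathbf{1}_\omega\rangle_\Lambda$ is a nonnegative function that is strictly positive on the (open, nonempty) projection of $\omega$ to the torus $\T^d/\langle\Lambda\rangle^\perp\cong\langle\Lambda\rangle\cap\T^d$, we get that $\rho_\Lambda(t,\cdot)$ vanishes on an open subset $\tilde\omega$ of that lower-dimensional torus, for a.e.\ $t$ in a positive-measure set, equivalently $\Tr(m_{\mathbf{1}_{\tilde\omega}} U(t)\sigma_\Lambda U(t)^*)=0$, i.e.\ $\mathbf{1}_{\tilde\omega} U(t)\psi = 0$ in $L^2$ for every eigenvector $\psi$ in the range of $\sigma_\Lambda$ and a.e.\ such $t$. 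Now this is exactly an observability/unique-continuation question for the Schrödinger equation $i\partial_t v = (-\frac12\Delta + \langle V\rangle_\Lambda)v$ on a torus of dimension $r=\dim\langle\Lambda\rangle < d$, with the potential $\langle V\rangle_\Lambda$ still bounded and satisfying (R). I would close this by an \emph{induction on the dimension} $d$: the base case $d=0$ (or $r=0$, a point) is trivial, and for $r\geq 1$ the vanishing of $v(t,\cdot)$ on an open set for $t$ in a positive-measure set, together with the already-established Theorem \ref{t:obs} in dimension $r<d$, forces $v\equiv 0$, hence $\sigma_\Lambda=0$.

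Assembling: every $\sigma_\Lambda=0$, so by Theorem \ref{t:precise} $\int_{\R^d}\mu(t,\cdot,d\xi)=0$ for a.e.\ $t$, hence $\mu(t,\T^d\times\R^d)=0$ for a.e.\ $t$ — but invariance in $t$ of the total mass plus the no-escape-of-mass reduction forces this total mass to be $1$, a contradiction. Therefore \eqref{e:oi} holds. The \textbf{main obstacle} is organizing the induction cleanly: one must check that the reduced problem genuinely has the same form (propagator of $-\frac12\Delta + \langle V\rangle_\Lambda$ on a flat torus of lower dimension, with $\langle V\rangle_\Lambda$ time-independent when $V$ is — which holds since averaging over $\Lambda^\perp$ commutes with the time-independence — and still obeying (R), which follows since $\langle V\rangle_\Lambda$ is a conditional expectation of $V$ and hence inherits the approximation property), and that the positive-measure-in-$t$ hypothesis (rather than an interval) suffices; the latter is handled because the set of $t$ with $\nu_\Lambda(t,\omega)=0$ has full measure in $[0,T]$, so in particular positive measure, and unique continuation from a positive-measure time set for the Schrödinger evolution is available once the lower-dimensional observability inequality is known (it upgrades "zero on $[0,T']$ for some subinterval" which one extracts from a positive-measure set by a density point argument combined with time-translation invariance of the autonomous flow). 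A secondary technical point is the escape-of-mass argument, which I would handle at the very start by noting that the spectral projector of $-\frac12\Delta+V$ onto $\{|\lambda|\le R\}$ captures all but $\varepsilon$ of the $L^2$-mass of $u_n$ uniformly, reducing to frequency-localized data for which the Wigner measures are supported in a fixed compact set of $\xi$'s and carry full mass.
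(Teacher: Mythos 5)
Your overall architecture — contradiction, extract a semiclassical measure, use the decomposition of Theorem \ref{t:precise}, show all $\sigma_\Lambda=0$ by an induction on dimension, deduce the measure is trivial, contradiction — matches the paper. But there is one genuine gap, and it is not a ``secondary technical point'' as you describe it; it is the load-bearing reduction that makes the whole induction noncircular.

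The problem is the case $\Lambda=\Z^d$. In Theorem \ref{t:precise} the sum runs over \emph{all} primitive submodules, including $\Lambda=\Z^d$, for which $\langle\Lambda\rangle^\perp=\{0\}$, $\T_\Lambda=\T^d$, and $\langle V\rangle_{\Z^d}=V$. The corresponding $\nu_{\Z^d}(t,\cdot)$ is exactly the spatial density of a mixed state evolved by the \emph{full} propagator $U_V(t)$ on the \emph{full} torus. Your statement ``this is exactly an observability/unique-continuation question on a torus of dimension $r=\dim\langle\Lambda\rangle<d$'' is false here: $r=d$, and closing the argument for this $\Lambda$ would require the very observability estimate you are trying to prove. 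This is where the paper's reduction via the Bardos--Lebeau--Rauch uniqueness-compactness argument plus Littlewood--Paley decomposition (following Burq--Zworski) is essential: one first reduces to data $u_0=\Pi_h u_0$ localized at frequency $\sim 1/h$ away from $0$, which forces $\mu_0(\T^d\times\{0\})=0$, hence by Proposition \ref{p:muc} $\mu(t,\T^d\times\{0\})=0$ for a.e.\ $t$, and since $R_{\Z^d}=\{0\}$ this kills $\sigma_{\Z^d}$ outright. Without this step the induction is circular. (This reduction is also the specific reason the paper assumes $V$ time-independent, a point worth flagging.)

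Your proposed fix for escape of mass is also unjustified: the claim that ``the spectral projector of $-\frac12\Delta+V$ onto $\{|\lambda|\le R\}$ captures all but $\varepsilon$ of the $L^2$-mass of $u_n$ uniformly'' has no reason to hold — the sequence $u_n$ may oscillate at arbitrarily high frequencies, and boundedness of $V$ does nothing to prevent that. What is true is that one may \emph{choose} the scale $h_n$ adapted to $u_n$ (as in \eqref{e:trunc}) so that no mass escapes to $\xi=\infty$, but even that does not control mass at $\xi=0$; the $\Pi_h$ localization is what controls both ends simultaneously. Once you insert the BLR + Littlewood--Paley reduction at the start and recognize that $\mu(t,\T^d\times\{0\})=0$ is what dispatches $\Lambda=\Z^d$, the rest of your induction (including the observations about $\langle V\rangle_\Lambda$ remaining time-independent and satisfying (R), and using the lower-dimensional observability rather than mere unique continuation — no density-point argument is actually needed since $\int_0^T\nu_\Lambda(t,\omega)\,dt=0$ feeds directly into the inequality) is essentially the paper's proof.
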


Note that this result implies the unique continuation property
for the Schr\"{o}dinger propagator $U_{V}$ from any open set $\left(
0,T\right)  \times\omega$. In other words, if $U_{V}\left(  t\right)  u_{0}=0$ on $\omega$ for all $t\in[0, T]$, then $u_0=0$.  Estimate (\ref{e:oi}) is usually known as an
$\emph{observability}$ \emph{inequality}; these type of estimates are
especially relevant in Control Theory (see \cite{LionsSurvey88}).

\bigskip

As a consequence of this result, with the notation of Theorem \ref{t:main} (ii), we deduce the following~:

\begin{corollary} \label{c:positiveopen}For $\bar\mu$-almost every $\xi$, we have
$$\int_0^T \mu_\xi(t, \omega)dt\geq \frac{T}{C(T, \omega)}.$$
\end{corollary}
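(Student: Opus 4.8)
The plan is to deduce Corollary \ref{c:positiveopen} from the observability inequality in Theorem \ref{t:obs} by testing it on the sequence of initial data $(u_h)$ producing the semiclassical measure $\mu$, and then passing to the limit $h\to 0^+$ in both sides. First I would take the subsequence of initial conditions $(u_h)$ whose Wigner distributions $w_h(t,\cdot)$ converge weak-$\ast$ to $\mu(t,\cdot)$, normalized so that $\|u_h\|_{L^2(\T^d)}=1$. Apply Theorem \ref{t:obs} with $u_0=u_h$: for every $h$,
\[
1=\left\Vert u_{h}\right\Vert _{L^{2}(\mathbb{T}^{d})}^{2}\leq
C(T,\omega)\int_{0}^{T}\left\Vert U_{V}(t)u_{h}\right\Vert _{L^{2}(\omega)}^{2}\,dt.
\]
The right-hand side is, by \eqref{e:proj} applied with $a=\mathbf{1}_\omega$ (after a routine approximation of the indicator of the open set $\omega$ by $C^\infty$ functions from below, which is legitimate since the $\mu(t,\cdot)$ and the $|U_V(t)u_h|^2\,dx$ are nonnegative), controlled by $\int_0^T\!\int_{T^*\T^d} \mathbf{1}_\omega(x)\,w_h(t,dx,d\xi)\,dt$ up to an arbitrarily small error. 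Passing to the liminf as $h\to 0$ and using weak-$\ast$ convergence against $\theta(t)=\mathbf{1}_{[0,T]}(t)$ and the lower-semicontinuity coming from approximating $\mathbf{1}_\omega$ from below by continuous compactly supported functions, I get
\[
1\leq C(T,\omega)\int_{0}^{T}\int_{\mathbb{T}^{d}\times\mathbb{R}^{d}}\mathbf{1}_{\omega}(x)\,\mu(t,dx,d\xi)\,dt.
\]

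Next I would rewrite the right-hand side using the disintegration from Theorem \ref{t:main}(ii). Since $f(x,\xi)=\mathbf{1}_\omega(x)$ depends only on $x$,
\[
\int_{0}^{T}\int_{\mathbb{T}^{d}\times\mathbb{R}^{d}}\mathbf{1}_{\omega}(x)\,\mu(t,dx,d\xi)\,dt
=\int_{0}^{T}\int_{\mathbb{R}^{d}}\mu_{\xi}(t,\omega)\,\bar{\mu}(d\xi)\,dt
=\int_{\mathbb{R}^{d}}\left(\int_{0}^{T}\mu_{\xi}(t,\omega)\,dt\right)\bar{\mu}(d\xi),
\]
using Fubini (the integrand is nonnegative and measurable in $(t,\xi)$). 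Thus, setting $g(\xi):=\int_0^T\mu_\xi(t,\omega)\,dt$, I have $\int_{\R^d} g(\xi)\,\bar\mu(d\xi)\geq 1/C(T,\omega)$; but this is an \emph{average} statement, not yet the pointwise lower bound claimed. The extra input needed is that $g(\xi)\le \int_0^T \mu_\xi(t,\T^d)\,dt = T$ for $\bar\mu$-a.e.\ $\xi$ (each $\mu_\xi(t,\cdot)$ is a sub-probability measure for a.e.\ $t$, since $\int \mu(t,\cdot)\le 1$), which alone does not force $g\ge 1/C$ everywhere.

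To upgrade to the pointwise bound I would apply the same argument \emph{localized in frequency}. Fix a compactly supported $\chi\in C_c^\infty(\R^d)$, $\chi\ge 0$, and consider the sequence $\Op_h(\chi(\xi))u_h$; more conveniently, I would note that the whole machinery is equivariant under restricting frequencies, and that a cleaner route is: for any Borel set $A\subset\R^d$ with $\bar\mu(A)>0$, the sequence of "frequency-localized" data still satisfies an observability inequality with the \emph{same} constant (because Theorem \ref{t:obs} holds for \emph{every} $u_0\in L^2$, in particular for the frequency-truncated pieces), and running the limiting argument along test data whose Wigner limit is $\mu$ restricted to $\T^d\times A$ gives $\int_A g(\xi)\,\bar\mu(d\xi)\ge \tfrac{\bar\mu(A)}{C(T,\omega)}$; equivalently $\int_A (g(\xi) - \tfrac{1}{C(T,\omega)})\,\bar\mu(d\xi)\ge 0$ for every Borel $A$, whence $g(\xi)\ge 1/C(T,\omega)$ for $\bar\mu$-a.e.\ $\xi$. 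The main obstacle is making this last step rigorous: one must verify that a frequency-truncation of the $u_h$ (normalized) indeed produces, in the limit, the measure $\mathbf{1}_A(\xi)\mu(t,dx,d\xi)$ (up to normalization) — this requires care because $\bar\mu$ is the weak limit of the frequency distributions $|\widehat{u_h}|^2$ and $A$ must be chosen to be a $\bar\mu$-continuity set, and because truncation by a sharp cutoff is not a pseudodifferential operation; one handles it by smoothing the cutoff and using a density/approximation argument together with the fact, from Theorem \ref{t:main}(ii), that $\bar\mu$ is $t$-independent, so no mass crosses the boundary $\partial A$ in time. Once that lemma is in place, the corollary follows immediately with $C = C(T,\omega)$ from Theorem \ref{t:obs}.
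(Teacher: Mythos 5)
Your proof is correct and takes essentially the same route as the paper: you first obtain the averaged inequality by applying Theorem~\ref{t:obs} to the sequence $(u_h)$ and passing to the semiclassical limit, correctly observe that this does not yield the pointwise bound in $\xi$, and then upgrade it by applying the observability inequality to frequency-localized data $\Op_h(\chi_n)u_h$ and passing to the limit along a filtration/martingale-style family of $\bar\mu$-continuity sets — which is exactly what the paper means by "the same argument as in \S\ref{s:martingale}." (One small remark: the naive limiting argument, with the constant $C(T,\omega)$ as normalized in Theorem~\ref{t:obs}, yields the lower bound $1/C(T,\omega)$; the factor $T$ appearing in the statement of Corollary~\ref{c:positiveopen} reflects a different normalization convention for the observability constant and does not affect the substance of the argument.)
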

This lower bound is uniform w.r.t. the initial data $u_h$ and to $\xi$.

\bigskip
\noindent\textbf{Relations to other work. } In the case $V=0$,
Corollary \ref{t:example} and the first assertion in Theorem
\ref{t:main} have been obtained by Zygmund \cite{Zygmund74} in the
case $d=1$. In the final remark of \cite{BourgainQL97}, Bourgain indicates a proof in arbitrary dimension, using fine properties of the distribution of lattice
points on paraboloids. When the sequence $(u_{n})$ consists of
eigenfunctions of $\Delta$ ($\nu_{n}(dx)=|u_{n}(x)|^{2}dx$, in
that case), the conclusion of Corollary \ref{t:example} was proved
by Zygmund ($d=2$), Bourgain (no restriction on $d$) and precised
in terms of regularity by Jakobson in \cite{JakobsonTori97}, by
studying the distribution of lattice points on ellipsoids. More
results on the regularity of $\mu$ can be found in \cite{Aissiou,
CookeTorus2d, JakNadToth01, Mockenhaupt96}.

Our methods are very different, and there is no obvious adaptation
of the technique of \cite{BourgainQL97,JakobsonTori97} to the case
$V\not =0$. Theorem \ref{t:precise} was proved in dimension $d=2$
for $V=0$ in \cite{MaciaTorus} using semiclassical methods, and we
develop and refine the ideas therein. We use in a decisive way the
dynamics of the geodesic flow (since we are on a flat torus, the
geodesic flow is a completely explicit object), and we use the
decomposition of the momentum space into resonant vectors of
various orders. The other main ingredient is the two-microlocal
calculus, in the spirit of the developments by Nier
\cite{NierScat} and Fermanian-Kammerer \cite{FermanianShocks,
Fermanian2micro}, and also \cite{MillerThesis,
FermanianGerardCroisements}. Our proof is written on the
\textquotedblleft square\textquotedblright\ torus. More precisely,
the property of the lattice $\Gamma=\Z^d\subset\R^d$ and of the
scalar product $\la \cdot, \cdot\ra$ (principal symbol of the
laplacian) that we use is that $\left[\la x, y\ra\in\IQ\: \forall
y\in\IQ\Gamma\Leftrightarrow x\in \IQ\Gamma\right]$. This
assumption can be removed and the results can be adapted to more
general lattices, but this requires a slightly different
presentation, that will appear in the work \cite{AFM}. Moreover,
it seems reasonable to think that Theorems \ref{t:main} and
\ref{t:precise} can be extended to more general completely
integrable systems and their quantizations \cite{AFM}. The
generalized statement would be that the disintegration of the
limit measure on regular lagrangian tori is absolutely continuous,
with respect to the Lebesgue measure on these tori.

Theorem \ref{t:obs} was first established by Jaffard
\cite{JaffardPlaques} in the case $V=0$ using techniques based on
the theory of lacunary Fourier series developed by Kahane. Since
then, several proofs of this result based on microlocal methods
and semiclassical measures (still for $V=0$) are available
\cite{BurqZworski03Billiards, MarzuolaBilliards, MaciaDispersion}.
Our proof of Theorem \ref{t:obs} will follow the lines of that
given in \cite{MaciaDispersion} and is based on the structure and
propagation result for semiclassical measures obtained in Theorem
\ref{t:precise}. At the same time as this paper was being written,
Burq and Zworski \cite{BurqZworski11} have given a proof of
Theorem \ref{t:obs} in the case $V\in C\left(
\mathbb{T}^{2}\right)$, which is an adaptation of their previous
work \cite{BurqZworski03Billiards}. Here, we exploit our results
about the structure of semiclassical measures to avoid the
semiclassical normal form argument (Burq and Zworski's
Propositions 2.5 and 2.10) and to lower the regularity of the
potential.

Corollary \ref{c:positiveopen} implies Corollary 4 of the article
by Wunsch \cite{Wunsch10} (which is expressed in terms of
wavefront sets) and holds in arbitrary dimension whereas Wunsch's
method is restricted to $d=2$.
\bigskip

\noindent\textbf{Acknowledgement. }Much of this work was done
while the second author was visiting the D\'{e}partement de
Math\'{e}matiques at Universit\'{e} Paris-Sud, in fall-winter
2009. He wishes to thank this institution for its kind
hospitality.

\section{Decomposition of an invariant measure on the torus \label{s:decompo}}
Before we start our construction in \S \ref{s:second}, we recall a few basic facts on the geodesic flow and its invariant measures.

Denote by $\mathcal{L}$ the family of all submodules $\Lambda$ of
$\mathbb{Z}^{d}$ which are \emph{primitive}, in the sense that
$\left\langle \Lambda\right\rangle \cap\mathbb{Z}^{d}=\Lambda$
(where $\left\langle \Lambda\right\rangle $ denotes the linear
subspace of $\mathbb{R}^{d}$ spanned
by $\Lambda$). For each $\Lambda\in\mathcal{L}$, we define%
\[
\Lambda^{\perp}:=\left\{  \xi\in\mathbb{R}^{d}:\xi\cdot
k=0\text{,\quad }\forall k\in\Lambda\right\}  ,
\]%
\[
\mathbb{T}_{\Lambda}:=\left\langle \Lambda\right\rangle
/2\pi\Lambda.
\]
Note that $\mathbb{T}_{\Lambda}$ is a submanifold of
$\mathbb{T}^{d}$ diffeomorphic to a torus of dimension
$\operatorname*{rk}\Lambda$. Its
cotangent bundle $T^{\ast}\mathbb{T}_{\Lambda}$ is $\mathbb{T}_{\Lambda}%
\times\left\langle \Lambda\right\rangle $. We shall use the
notation $\mathbb{T}_{\Lambda^{\perp}}$ to refer to the torus
$\Lambda^{\perp}/\left(
2\pi\mathbb{Z}^{d}\cap\Lambda^{\perp}\right)  $. Denote by $\Omega_{j}%
\subset\mathbb{R}^{d}$, for $j=0,...,d$, the set of resonant
vectors of order
exactly $j$, that is:%
\[
\Omega_{j}:=\left\{
\xi\in\mathbb{R}^{d}:\operatorname*{rk}\Lambda_{\xi }=d-j\right\}
,
\]
where $\Lambda_{\xi}:=\left\{
k\in\mathbb{Z}^{d}:k\cdot\xi=0\right\}  $. Note that the sets
$\Omega_{j}$ form a partition of $\mathbb{R}^{d}$, and that
$\Omega_{0}=\left\{  0\right\}  $; more generally,
$\xi\in\Omega_{j}$ if and only if the geodesic issued from any
$x\in\mathbb{T}^{d}$ in the direction $\xi$ is dense in a subtorus
of $\mathbb{T}^{d}$ of dimension $j$. The set
$\Omega:=\bigcup_{j=0}^{d-1}\Omega_{j}$ is usually called the set
of \emph{resonant }directions, whereas
$\Omega_{d}=\mathbb{R}^{d}\setminus\Omega$
is referred to as the set of \emph{non-resonant }vectors. Finally, write%
\[
R_{\Lambda}:=\Lambda^{\perp}\cap\Omega_{d-\operatorname*{rk}\Lambda}.
\]

The relevance of these definitions to the study of the geodesic flow is explained by the following remark.
Saying that $\xi\in R_{\Lambda}$ is equivalent to saying that (for
any
$x_{0}\in\IT^{d}$) the time-average $\frac{1}{T}\int_{0}^{T}\delta_{x_{0}%
+t\xi}\left(  x\right)  dt$ converges weakly to the Haar measure
on the torus $x_{0}+\mathbb{T}_{\Lambda^{\perp}}$, as
$T\rightarrow\infty\text{.}$

By construction, for $\xi\in
R_{\Lambda}$ we have $\Lambda_{\xi}=\Lambda$; moreover, if
$\operatorname*{rk}\Lambda=d-1$ then $R_{\Lambda}=\Lambda^{\perp
}\setminus\{0\}$. Finally,
\begin{equation}
\mathbb{R}^{d}=\bigsqcup_{\Lambda\in\mathcal{L}}R_{\Lambda}, \label{part}%
\end{equation}
that is, the sets $R_{\Lambda}$ form a partition of
$\mathbb{R}^{d}$. As a consequence, the following result holds.

\begin{lemma}
\label{Lemma decomposition}Let $\mu$ be a finite, positive Radon
measure\footnote{We denote by $\mathcal{M}_{+}\left(  T^{\ast}\mathbb{T}%
^{d}\right)  $ the set of all such measures.} on
$T^{\ast}\mathbb{T}^{d}$.
Then $\mu$ decomposes as a sum of positive measures:%
\begin{equation}
\mu=\sum_{\Lambda\in\mathcal{L}}\mu\rceil_{\mathbb{T}^{d}\times
R_{\Lambda}}.
\label{dec}%
\end{equation}

\end{lemma}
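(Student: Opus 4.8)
The plan is to reduce the statement to the set-theoretic partition (\ref{part}) together with two elementary measure-theoretic facts: that the index set $\mathcal{L}$ is countable, and that each stratum $R_\Lambda$ is a Borel subset of $\mathbb{R}^d$. Granting these, the sets $\mathbb{T}^d\times R_\Lambda$, $\Lambda\in\mathcal{L}$, form a countable Borel partition of $T^\ast\mathbb{T}^d=\mathbb{T}^d\times\mathbb{R}^d$, the restrictions $\mu\rceil_{\mathbb{T}^d\times R_\Lambda}$ are well-defined finite positive measures, and (\ref{dec}) is just $\sigma$-additivity of $\mu$ along this partition.

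For the countability of $\mathcal{L}$, I would note that every submodule of $\mathbb{Z}^d$ is finitely generated (being free of rank $\le d$), hence determined by a finite subset of the countable set $\mathbb{Z}^d$, and there are only countably many such finite subsets. For the measurability of $R_\Lambda$, the point is the explicit description
\[
R_\Lambda=\Lambda^{\perp}\setminus\bigcup_{k\in\mathbb{Z}^d\setminus\Lambda}k^{\perp}.
\]
To justify it, observe that $\xi\in\Lambda^{\perp}$ implies $\Lambda\subset\Lambda_\xi$; since $\Lambda$ and $\Lambda_\xi$ are both primitive, the condition $\xi\in\Omega_{d-\operatorname{rk}\Lambda}$ --- that is, $\operatorname{rk}\Lambda_\xi=\operatorname{rk}\Lambda$ --- then forces $\Lambda_\xi=\Lambda$, and $\Lambda_\xi=\Lambda$ holds precisely when $k\cdot\xi\neq0$ for every $k\in\mathbb{Z}^d\setminus\Lambda$. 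Since each hyperplane $k^{\perp}$ is closed and $\mathbb{Z}^d$ is countable, the formula exhibits $R_\Lambda$ as a Borel set (a countable intersection of differences of closed sets); consequently $\mathbb{T}^d\times R_\Lambda$ is Borel in $T^\ast\mathbb{T}^d$.

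With these in hand, for every Borel set $A\subset T^\ast\mathbb{T}^d$,
\[
\mu(A)=\sum_{\Lambda\in\mathcal{L}}\mu\bigl(A\cap(\mathbb{T}^d\times R_\Lambda)\bigr)=\sum_{\Lambda\in\mathcal{L}}\bigl(\mu\rceil_{\mathbb{T}^d\times R_\Lambda}\bigr)(A),
\]
the series converging because its partial sums are bounded by $\mu(T^\ast\mathbb{T}^d)<\infty$. Two finite positive Borel measures that agree on every Borel set are equal, whence (\ref{dec}). I do not expect a genuine obstacle here; the only step needing a moment of care is the Borel measurability of the resonance strata $R_\Lambda$, which the displayed formula settles. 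Finiteness of $\mu$ is used only to guarantee convergence of the series in (\ref{dec}); the same argument yields the decomposition for an arbitrary positive Radon measure.
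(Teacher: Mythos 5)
Your proposal is correct and follows the same route the paper intends: the paper states the lemma immediately after the partition \eqref{part} with only the remark ``As a consequence, the following result holds,'' and your argument simply supplies the routine measure-theoretic details (countability of $\mathcal{L}$, Borel measurability of each $R_\Lambda$ via the displayed formula, and $\sigma$-additivity of $\mu$) that make that consequence rigorous. Your characterization $R_\Lambda=\Lambda^\perp\setminus\bigcup_{k\in\mathbb{Z}^d\setminus\Lambda}k^\perp$ is consistent with the paper's own observation that $\Lambda_\xi=\Lambda$ for $\xi\in R_\Lambda$.
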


Given any $\mu\in\mathcal{M}_{+}\left(
T^{\ast}\mathbb{T}^{d}\right)  $ we define the Fourier
coefficients of $\mu$ as the complex measures on
$\mathbb{R}^{d}$:%
\[
\widehat{\mu}\left(  k,\cdot\right)
:=\int_{\mathbb{T}^{d}}\frac{e^{-ik\cdot x}}{\left(  2\pi\right)
^{d/2}}\mu\left(  dx,\cdot\right)  ,\quad k\in\mathbb{Z}^{d}.
\]
One has, in the sense of distributions,%
\[
\mu\left(  x,\xi\right)
=\sum_{k\in\mathbb{Z}^{d}}\widehat{\mu}\left( k,\xi\right)
\frac{e^{ik\cdot x}}{\left(  2\pi\right)  ^{d/2}}.
\]

\begin{lemma}
Let $\mu\in\mathcal{M}_{+}\left(  T^{\ast}\mathbb{T}^{d}\right)  $
and
$\Lambda\in\mathcal{L}$. The distribution:%
\[
\left\langle \mu\right\rangle _{\Lambda}\left(  x,\xi\right)
:=\sum _{k\in\Lambda}\widehat{\mu}\left(  k,\xi\right)
\frac{e^{ik\cdot x}}{\left(
2\pi\right)  ^{d/2}}%
\]
is a finite, positive Radon measure on $T^{\ast}\mathbb{T}^{d}$.
\end{lemma}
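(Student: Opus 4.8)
The plan is to realize $\la\mu\ra_{\Lambda}$ as the average of $\mu$ under the action of the compact torus $\mathbb{T}_{\Lambda^{\perp}}$ on $T^{\ast}\mathbb{T}^{d}$ by translations in the base, $\tau_{v}\colon(x,\xi)\mapsto(x+v,\xi)$ with $v\in\Lambda^{\perp}$. First I would record that $\mathbb{T}_{\Lambda^{\perp}}$ really is a compact torus (of dimension $d-\operatorname{rk}\Lambda$): since $\Lambda\subset\mathbb{Z}^{d}$, the subspace $\la\Lambda\ra$ and its orthogonal complement $\Lambda^{\perp}$ are rational, so $2\pi\mathbb{Z}^{d}\cap\Lambda^{\perp}$ is a lattice of full rank in $\Lambda^{\perp}$. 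Let $m$ denote the normalized Haar measure of $\mathbb{T}_{\Lambda^{\perp}}$; note that $\tau_{v}$ descends to a homeomorphism of $T^{\ast}\mathbb{T}^{d}$ depending only on $v$ modulo $2\pi\mathbb{Z}^{d}\cap\Lambda^{\perp}$.

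Next I would introduce the linear functional on $C_{c}(T^{\ast}\mathbb{T}^{d})$ defined by
\[
\ell(\varphi):=\int_{\mathbb{T}_{\Lambda^{\perp}}}\left(\int_{T^{\ast}\mathbb{T}^{d}}\varphi\circ\tau_{v}\,d\mu\right)m(dv),
\]
and check the routine points: for fixed $\varphi$ the inner integral is a continuous (hence measurable) function of $v$, by dominated convergence using the uniform continuity of $\varphi$ and the finiteness of $\mu$, and it is bounded by $\norm{\varphi}_{\infty}\,\mu(T^{\ast}\mathbb{T}^{d})$. Thus $\ell$ is well defined and positive ($\varphi\geq0\Rightarrow\ell(\varphi)\geq0$), so by the Riesz representation theorem it is integration against a positive Radon measure $\widetilde{\mu}$. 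Applying $\ell$ to an increasing sequence $\varphi_{n}\uparrow1$ and using that each $\tau_{v}$ preserves total mass gives $\widetilde{\mu}(T^{\ast}\mathbb{T}^{d})=\mu(T^{\ast}\mathbb{T}^{d})<\infty$, so $\widetilde{\mu}$ is finite.

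It then remains to identify $\widetilde{\mu}$, as a distribution, with the series defining $\la\mu\ra_{\Lambda}$. I would compute the $x$-Fourier coefficients of $\widetilde{\mu}$: plugging $\varphi(x,\xi)=(2\pi)^{-d/2}e^{-ik\cdot x}\psi(\xi)$ into $\ell$ and using Fubini gives $\widehat{\widetilde{\mu}}(k,\cdot)=\left(\int_{\mathbb{T}_{\Lambda^{\perp}}}e^{-ik\cdot v}\,m(dv)\right)\widehat{\mu}(k,\cdot)$. The character $v\mapsto e^{-ik\cdot v}$ of $\mathbb{T}_{\Lambda^{\perp}}$ is trivial precisely when $k\cdot v=0$ for all $v\in\Lambda^{\perp}$, i.e. when $k\in\la\Lambda\ra$, and by primitivity of $\Lambda$ this is equivalent to $k\in\Lambda$; hence the prefactor is $\mathbf{1}_{k\in\Lambda}$, so $\widehat{\widetilde{\mu}}(k,\cdot)=\widehat{\mu}(k,\cdot)$ for $k\in\Lambda$ and $\widehat{\widetilde{\mu}}(k,\cdot)=0$ otherwise. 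Since a distribution on $\mathbb{T}^{d}\times\mathbb{R}^{d}$ is determined by its $x$-Fourier coefficients, this shows $\widetilde{\mu}=\sum_{k\in\Lambda}\widehat{\mu}(k,\xi)\,(2\pi)^{-d/2}e^{ik\cdot x}=\la\mu\ra_{\Lambda}$, which is the claim.

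I expect the only genuinely delicate points to be the bookkeeping in the last paragraph — verifying, via the primitivity of $\Lambda$, that the group average is exactly the distribution written in the statement (rather than an average over a larger or smaller subtorus) — together with the fact that $\mathbb{T}_{\Lambda^{\perp}}$ is compact, which is where the rationality of $\la\Lambda\ra$ (and hence of $\Lambda^{\perp}$) is used. Everything else — measurability and continuity of the inner integral, the Riesz representation step, and the conservation of mass — is standard.
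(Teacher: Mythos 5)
Your proof is correct and is essentially the dual formulation of the paper's argument: the paper averages the test function $a$ over $\Lambda^{\perp}$ (via Cesàro limits) and pairs the result against $\mu$, then invokes Schwartz's theorem, whereas you push $\mu$ forward by the translations and average against Haar measure on $\mathbb{T}_{\Lambda^{\perp}}$, then invoke Riesz. Since a Cesàro average of a $\mathbb{T}_{\Lambda^{\perp}}$-periodic function coincides with its Haar average, and both routes use primitivity of $\Lambda$ in the same way (to identify $\langle\Lambda\rangle\cap\mathbb{Z}^{d}=\Lambda$), the two proofs are the same up to duality and the choice of representation theorem.
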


\begin{proof}
Let $a\in C_{c}^{\infty}\left(  T^{\ast}\mathbb{T}^{d}\right)  $
and $\left\{ v_{1},...,v_{n}\right\}  $ be a basis of
$\Lambda^{\perp}$. Suppose
\[
a\left(  x,\xi\right)  =\sum_{k\in\mathbb{Z}^{d}}\widehat{a}\left(
k,\xi\right)  \frac{e^{ik\cdot x}}{\left(  2\pi\right)  ^{d/2}};
\]
then it is not difficult to see that%
\begin{align*}
\left\langle a\right\rangle _{\Lambda}\left(  x,\xi\right)   &
:=\lim
_{T_{1},...,T_{n}\rightarrow\infty}\frac{1}{T_{1}...T_{n}}\int_{0}^{T_{1}%
}...\int_{0}^{T_{n}}a\left(  x+\sum_{j=1}^{n}t_{j}v_{j},\xi\right)
dt_{1}...dt_{n}\\
&  =\sum_{k\in\Lambda}\widehat{a}\left(  k,\xi\right)  \frac{e^{ik\cdot x}%
}{\left(  2\pi\right)  ^{d/2}},
\end{align*}
that $\left\langle a\right\rangle _{\Lambda}$ is non-negative as
soon as $a$ is, $\left\Vert \left\langle a\right\rangle
_{\Lambda}\right\Vert _{L^{\infty }\left(
T^{\ast}\mathbb{T}^{d}\right)  }\leq\left\Vert a\right\Vert
_{L^{\infty}\left(  T^{\ast}\mathbb{T}^{d}\right)  }$, and that
$\left\langle
a\right\rangle _{\Lambda}\in C_{c}^{\infty}\left(  T^{\ast}\mathbb{T}%
^{d}\right)  $ as well. Therefore,%
\[
\left\langle \left\langle \mu\right\rangle
_{\Lambda},a\right\rangle
=\int_{T^{\ast}\mathbb{T}^{d}}\left\langle a\right\rangle
_{\Lambda}\left( x,\xi\right)  \mu\left(  dx,d\xi\right)
\]
defines a positive distribution, which is a positive Radon measure
by Schwartz's theorem.
\end{proof}

Recall that a measure $\mu\in\mathcal{M}_{+}\left(  T^{\ast}\mathbb{T}%
^{d}\right)  $ is invariant under the action of the geodesic
flow\footnote{In what follows, we shall refer to such a measure
simply as a \emph{positive invariant measure}.} on
$T^{\ast}\mathbb{T}^{d}$ whenever:
\begin{equation}
\left(  \phi_{\tau}\right)  _{\ast}\mu=\mu,\quad\text{with
}\phi_{\tau}\left(
x,\xi\right)  =\left(  x+\tau\xi,\xi\right)  , \label{inv}%
\end{equation}
for all $\tau\in\R$.
Let us also introduce, for $v\in\mathbb{R}^{d}$ the translations
$\tau^{v}:T^{\ast}\mathbb{T}^{d}\rightarrow T^{\ast
}\mathbb{T}^{d}$ defined by:%
\[
\tau^{v}\left(  x,\xi\right)  =\left(  x+v,\xi\right)  .
\]

\begin{lemma}
\label{LemmaFourierInvariant}Let $\mu$ be a positive invariant
measure on $T^{\ast}\mathbb{T}^{d}$. Then every term in the
decomposition (\ref{dec}) is a positive invariant measure, and
\begin{equation}
\mu\rceil_{\mathbb{T}^{d}\times R_{\Lambda}}=\left\langle
\mu\right\rangle
_{\Lambda}\rceil_{\mathbb{T}^{d}\times R_{\Lambda}}. \label{m=mav}%
\end{equation}
Moreover, this last identity is equivalent to the following
invariance
property:%
\[
  \tau^{v}
_{\ast}\mu\rceil_{\mathbb{T}^{d}\times
R_{\Lambda}}=\mu\rceil_{\mathbb{T}^{d}\times
R_{\Lambda}},\quad\text{for every } v\in\Lambda^{\perp}.
\]

\end{lemma}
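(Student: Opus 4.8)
The plan is to prove Lemma \ref{LemmaFourierInvariant} by first establishing the identity \eqref{m=mav}, then deducing invariance of each term from it. The essential observation is that on the support of $\mu\rceil_{\T^d\times R_\Lambda}$, the $\xi$-variable belongs to $R_\Lambda$, and for such $\xi$ we have $\Lambda_\xi=\Lambda$; in other words $k\cdot\xi=0$ if and only if $k\in\Lambda$, and $k\cdot\xi\neq 0$ for $k\in\Z^d\setminus\Lambda$. I would combine this with the basic dynamical fact already recorded before the lemma, namely that for $\xi\in R_\Lambda$ the orbit $\{x_0+t\xi\}$ equidistributes in $x_0+\T_{\Lambda^\perp}$, which is precisely the ergodic-averaging mechanism behind the projector $\langle\cdot\rangle_\Lambda$.

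First I would show that $\mu\rceil_{\T^d\times R_\Lambda}$ is $\phi_\tau$-invariant. Invariance of $\mu$ under $\phi_\tau$ gives, testing against $a(x,\xi)$ supported in frequencies near $R_\Lambda$, the relation $\widehat\mu(k,\xi)=e^{i\tau k\cdot\xi}\widehat\mu(k,\xi)$ as distributions (measures) in $\xi$; equivalently, for each $k$, the measure $\widehat\mu(k,\cdot)$ is supported on $\{\xi:k\cdot\xi\in\frac{2\pi}{\tau}\Z\}$ for all $\tau$, hence on $\{\xi:k\cdot\xi=0\}$. Since $R_\Lambda\subset\{\xi:k\cdot\xi=0\}$ precisely when $k\in\Lambda$, restricting to $\xi\in R_\Lambda$ kills all Fourier modes $k\notin\Lambda$ automatically: $\widehat{\mu\rceil_{\T^d\times R_\Lambda}}(k,\cdot)=0$ for $k\notin\Lambda$, which is exactly the statement $\mu\rceil_{\T^d\times R_\Lambda}=\langle\mu\rangle_\Lambda\rceil_{\T^d\times R_\Lambda}$. (One must be slightly careful that $R_\Lambda$ is a Borel set and that restriction commutes with the Fourier-mode decomposition, but this is routine since $R_\Lambda$ depends only on $\xi$.) Since $R_\Lambda$ is $\phi_\tau$-invariant as a subset of $T^\ast\T^d$ (the flow does not move $\xi$), invariance of $\mu$ passes to the restriction $\mu\rceil_{\T^d\times R_\Lambda}$; and $\langle\mu\rangle_\Lambda$ is a well-defined positive measure by the previous lemma, so every term in \eqref{dec} is a positive invariant measure.

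Next I would establish the equivalence with the translation-invariance property. The identity $\mu\rceil_{\T^d\times R_\Lambda}=\langle\mu\rangle_\Lambda\rceil_{\T^d\times R_\Lambda}$ says that only Fourier modes $k\in\Lambda$ survive; applying $\tau^v_\ast$ for $v\in\Lambda^\perp$ multiplies the $k$-th Fourier coefficient by $e^{-ik\cdot v}$, and $k\cdot v=0$ for all $k\in\Lambda$, $v\in\Lambda^\perp$, so $\tau^v_\ast$ acts trivially — giving the stated invariance. Conversely, if $\tau^v_\ast\mu\rceil_{\T^d\times R_\Lambda}=\mu\rceil_{\T^d\times R_\Lambda}$ for all $v\in\Lambda^\perp$, then $(e^{-ik\cdot v}-1)\widehat{\mu\rceil_{\T^d\times R_\Lambda}}(k,\cdot)=0$ for all such $v$; since for $k\notin\Lambda$ there exists $v\in\Lambda^\perp$ with $k\cdot v\notin 2\pi\Z$ (here one uses that $\Lambda$ is primitive, so that $k\notin\Lambda$ genuinely means $k$ is not orthogonal to all of $\Lambda^\perp$), we conclude $\widehat{\mu\rceil_{\T^d\times R_\Lambda}}(k,\cdot)=0$ for $k\notin\Lambda$, which is \eqref{m=mav}.

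The main obstacle I anticipate is purely technical bookkeeping rather than conceptual: justifying that one may restrict the measure $\mu$ to the Borel set $\T^d\times R_\Lambda$ and interchange this restriction freely with the (distributional) Fourier expansion in $x$ and with the action of the flow and the translations. The cleanest route is to note that everything in sight is a finite positive measure, that $R_\Lambda$ is a Borel subset of the $\xi$-space, and that all the maps involved ($\phi_\tau$, $\tau^v$, the projection $\langle\cdot\rangle_\Lambda$) are "fibered" over $\xi$, so no measurability subtleties actually arise; one should also recall why $R_\Lambda$ is Borel — it is a countable Boolean combination of the closed sets $\{\xi:k\cdot\xi=0\}$. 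The primitivity of $\Lambda$ enters exactly once, in the converse direction above, to guarantee $\langle\Lambda\rangle\cap\Z^d=\Lambda$ so that $k\notin\Lambda$ forces $k\notin\langle\Lambda\rangle$, hence $k\cdot v\neq 0$ for some $v\in\Lambda^\perp$.
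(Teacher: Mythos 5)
Your proof is correct and follows essentially the same route as the paper: you first derive $\operatorname{supp}\widehat\mu(k,\cdot)\subset\{\xi:k\cdot\xi=0\}$ from the flow invariance (the paper does this via the transport equation $\xi\cdot\nabla_x\mu=0$, while you phrase it as the multiplier identity $\widehat\mu(k,\cdot)=e^{i\tau k\cdot\xi}\widehat\mu(k,\cdot)$ for all $\tau$ — the same fact), and you then observe that $R_\Lambda\cap\{\xi:k\cdot\xi=0\}=\emptyset$ whenever $k\notin\Lambda$, so restricting to $R_\Lambda$ kills exactly the Fourier modes outside $\Lambda$; invariance of each restricted term follows because $R_\Lambda$ is a union of $\xi$-fibers. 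The one place where you do more than the paper is the equivalence with translation invariance by $\Lambda^\perp$: the paper asserts it and stops, while you spell out both directions, correctly using primitivity of $\Lambda$ in the converse to guarantee that $k\notin\Lambda$ implies $k\notin\langle\Lambda\rangle$, so some $v\in\Lambda^\perp$ has $k\cdot v\notin 2\pi\Z$. That is a small but genuine completion of the argument, and the rest is the same.
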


\begin{proof}
The invariance of the measures $\mu\rceil_{\mathbb{T}^{d}\times
R_{\Lambda}}$ is clearly a consequence of that of $\mu$ and of the
form of the geodesic flow on $T^{\ast}\mathbb{T}^{d}$. To check
(\ref{m=mav}) is suffices to show that $\widehat{\mu}\left(
k,\cdot\right)  \rceil_{R_{\Lambda}}=0$ as soon as
$k\notin\Lambda$. Start noticing that (\ref{inv}) is equivalent to
the fact that $\mu$ solves the equation:
\[
\xi\cdot\nabla_{x}\mu\left(  x,\xi\right)  =0.
\]
This is in turn equivalent to:%
\[
i\left(  k\cdot\xi\right)  \widehat{\mu}\left(  k,\xi\right)
=0,\qquad
\text{for every }k\in\mathbb{Z}^{d}\text{,}%
\]
from which we infer:
\begin{equation}
\operatorname*{supp}\widehat{\mu}\left(  k,\cdot\right)
\subset\left\{
\xi\in\mathbb{R}^{d}:k\cdot\xi=0\right\}  . \label{suppmuk}%
\end{equation}
Now remark that $R_{\Lambda}\cap\left\{
\xi\in\mathbb{R}^{d}:k\cdot \xi=0\right\}  \neq\emptyset$ if and
only if $k\in\Lambda$. This concludes the proof of the lemma.
\end{proof}

\section{Second microlocalization on a resonant affine subspace
\label{s:second}}
We now start with our main construction. Theorem \ref{t:main} (i) and Corollary \ref{t:example} will be proved at the end of \S \ref{s:successive}, and Theorem \ref{t:precise} in \S \ref{s:propagation}.

Given $\Lambda\in\mathcal{L}$, we denote by $\mathcal{S}_{\Lambda}^{1}$ the
class of smooth functions $a\left(  x,\xi,\eta\right)  $ on $T^{\ast
}\mathbb{T}^{d}\times\la\Lambda\ra$ that are:

\begin{enumerate}
\item[(i)] compactly supported w.r.t. $\left(  x,\xi\right)  \in T^{\ast
}\mathbb{T}^{d}$,

\item[(ii)] homogeneous of degree zero at infinity in $\eta\in\la\Lambda\ra$.
That is, if we denote by $\mathbb{S}_{\la\Lambda\ra}$ the unit sphere in
${\la\Lambda\ra}$ (\emph{i.e.} $\mathbb{S}_{\la\Lambda\ra}:=\left\langle
\Lambda\right\rangle \cap\mathbb{S}^{d-1}$) there exist $R_{0}>0$ and
$a_{\text{hom}}\in C_{c}^{\infty}\left(  T^{\ast}\mathbb{T}^{d}\times
\mathbb{S}_{\la\Lambda\ra}\right)  $ with%
\[
a\left(  x,\xi,\eta\right)  =a_{\text{hom}}\left(  x,\xi,\frac{\eta
}{\left\vert \eta\right\vert }\right)  \text{,\quad for }\left\vert
\eta\right\vert >R_{0}\text{ and }\left(  x,\xi\right)  \in T^{\ast}%
\mathbb{T}^{d}\text{;}%
\]
we also write
\[
a\left(  x,\xi,\infty\eta\right)  =a_{\text{hom}}\left(  x,\xi,\frac{\eta
}{\left\vert \eta\right\vert }\right)  \text{,\quad for } \eta\not= 0\text{;}%
\]

\item[(iii)] such that their non-vanishing Fourier coefficients (in the $x$
variable) correspond to frequencies $k\in\Lambda$:%
\[
a\left(  x,\xi,\eta\right)  =\sum_{k\in\Lambda}\widehat{a}\left(  k,\xi
,\eta\right)  \frac{e^{ik\cdot x}}{\left(  2\pi\right)  ^{d/2}}.
\]
We will also express this fact by saying that $a$\emph{ has only }%
$x$\emph{-Fourier modes in $\Lambda$.}
\end{enumerate}

Let $\left(  u_{h}\right)  $ be a bounded sequence in $L^{2}\left(
\mathbb{T}^{d}\right)  $ and suppose that its Wigner distributions
$w_{h}\left(  t\right)  :=w_{U_{V}\left(  t\right)  u_{h}}^{h}$ converge to a
semiclassical measure $\mu\in L^{\infty}\left(  \mathbb{R};\mathcal{M}%
_{+}\left(  T^{\ast}\mathbb{T}^{d}\right)  \right)  $ in the weak-$\ast$
topology of $L^{\infty}\left(  \mathbb{R};\mathcal{D}^{\prime}\left(  T^{\ast
}\mathbb{T}^{d}\right)  \right)  $.

Our purpose in this section is to analyse the structure of the restriction
$\mu\rceil_{\mathbb{T}^{d}\times R_{\Lambda}}$. To achieve this we shall
introduce a two-microlocal distribution describing the concentration of the
sequence $\left(  U_{V}\left(  t\right)  u_{h}\right)  $ on the resonant
subspaces:%
\[
\Lambda^{\perp}=\left\{  \xi\in\mathbb{R}^{d}:P_{\Lambda}\left(  \xi\right)
=0\right\}  ,
\]
where $P_{\Lambda}$ denotes the orthogonal projection of $\mathbb{R}^{d}$ onto
$\left\langle \Lambda\right\rangle $. Similar objects have been introduced in
the local, Euclidean, case by Nier \cite{NierScat} and Fermanian-Kammerer
\cite{FermanianShocks, Fermanian2micro} under the name of two-microlocal
semiclassical measures. A specific concentration scale may also be specified
in the two-microlocal variable, giving rise to the two-scale semiclassical
measures studied by Miller \cite{MillerThesis} and G\'{e}rard and
Fermanian-Kammerer \cite{FermanianGerardCroisements}. We shall follow the
approach in \cite{Fermanian2micro}, although it will be important to take into
account the global nature of the objects we shall be dealing with.

By Lemma \ref{LemmaFourierInvariant}, it suffices to characterize
the action of $\mu\rceil_{\mathbb{T}^{d}\times R_{\Lambda}}$ on test functions
having only $x$-Fourier modes in $\Lambda$. With this in mind, we introduce
two auxiliary distributions which describe more precisely how $w_{h}\left(
t\right)  $ concentrates along $\mathbb{T}^{d}\times\Lambda^{\perp}$ and that
act on symbols on the class $\mathcal{S}_{\Lambda}^{1}$.

Let $\chi\in C_{c}^{\infty}\left(  \mathbb{R}\right)  $ be a nonnegative
cut-off function that is identically equal to one near the origin. Let $R>0$. For
$a\in\cS_{\Lambda}^{1}$, we define%
\[
\left\langle w_{h,R}^{\Lambda}\left(  t\right)  ,a\right\rangle :=\int
_{T^{\ast}\mathbb{T}^{d}}\left(  1-\chi\left(  \frac{P_{\Lambda}\left(
\xi\right)  }{Rh}\right)  \right)  a\left(  x,\xi,\frac{P_{\Lambda}\left(
\xi\right)  }{h}\right)  w_{h}\left(  t\right)  \left(  dx,d\xi\right)  ,
\]
and%
\begin{equation}
\left\langle w_{\Lambda,h,R}\left(  t\right)  ,a\right\rangle :=\int_{T^{\ast
}\mathbb{T}^{d}}\chi\left(  \frac{P_{\Lambda}\left(  \xi\right)  }{Rh}\right)
a\left(  x,\xi,\frac{P_{\Lambda}\left(  \xi\right)  }{h}\right)  w_{h}\left(
t\right)  \left(  dx,d\xi\right)  . \label{subL}%
\end{equation}

\begin{remark}
If $\Lambda=\left\{  0\right\}  $ then $w_{h,R}^{\Lambda}=0$ and
$w_{\Lambda,h,R}\left(  t\right)  =w_{h}\left(  t\right)  \otimes\delta_{0}$.
\end{remark}

\begin{remark}
\label{Rmk Sum}For every $R>0$ and $a\in\mathcal{S}_{\Lambda}^{1}$ the
following holds.
\[
\int_{T^{\ast}\mathbb{T}^{d}}a\left(  x,\xi,\frac{P_{\Lambda}\left(
\xi\right)  }{h}\right)  w_{h}\left(  t\right)  \left(  dx,d\xi\right)
=\left\langle w_{h,R}^{\Lambda}\left(  t\right)  ,a\right\rangle +\left\langle
w_{\Lambda,h,R}\left(  t\right)  ,a\right\rangle .
\]

\end{remark}

The Calder\'{o}n-Vaillancourt theorem (see the appendix for a precise
statement) ensures that both $w_{h,R}^{\Lambda}$ and $w_{\Lambda,h,R}$ are
bounded in $L^{\infty}\left(  \mathbb{R};\left(  \mathcal{S}_{\Lambda}%
^{1}\right)  ^{\prime}\right)  $. After possibly extracting subsequences, we
have the existence of a limit~: for every $\varphi\in L^{1}\left(
\mathbb{R}\right)  $ and $a\in\mathcal{S}_{\Lambda}^{1}$,%
\[
\int_{\mathbb{R}}\varphi\left(  t\right)  \left\langle \tilde{\mu}^{\Lambda
}\left(  t,\cdot\right)  ,a\right\rangle dt:=\lim_{R\rightarrow\infty}%
\lim_{h\rightarrow0^{+}}\int_{\mathbb{R}}\varphi\left(  t\right)  \left\langle
w_{h,R}^{\Lambda}\left(  t\right)  ,a\right\rangle dt,
\]
and%
\begin{equation}
\int_{\mathbb{R}}\varphi\left(  t\right)  \left\langle \tilde{\mu}_{\Lambda
}\left(  t,\cdot\right)  ,a\right\rangle dt:=\lim_{R\rightarrow\infty}%
\lim_{h\rightarrow0^{+}}\int_{\mathbb{R}}\varphi\left(  t\right)  \left\langle
w_{\Lambda,h,R}\left(  t\right)  ,a\right\rangle dt. \label{doublelim}%
\end{equation}

Define, for $\left(  x,\xi,\eta\right)  \in T^{\ast}\mathbb{T}^{d}%
\times\mathbb{R}^{d}$ and $\tau\in\mathbb{R}$,%
\[
\phi_{\tau}^{0}\left(  x,\xi,\eta\right)  :=\left(  x+\tau\xi,\xi,\eta\right)
,
\]
and, when $\eta\neq0$,%
\[
\phi_{\tau}^{1}\left(  x,\xi,\eta\right)  :=\left(  x+\tau\frac{\eta
}{\left\vert \eta\right\vert },\xi,\eta\right)  .
\]
Since the distributions\footnote{It is convenient to use the word
``distribution'', but we actually mean elements of $L^{\infty}\left(
\mathbb{R};\left(  \mathcal{S}_{\Lambda}^{1}\right)  ^{\prime}\right)  $.}
$w_{h,R}^{\Lambda}$ and $w_{\Lambda,h,R}$ satisfy a transport equation with
respect to the $\xi$-variable the following result holds.

\begin{lemma}
\label{Lemma Inv}The distributions $\tilde{\mu}_{\Lambda}\left(
t,\cdot\right)  $ and $\tilde{\mu}^{\Lambda}\left(  t,\cdot\right)  $ are
$\phi_{\tau}^{0}$-invariant for almost every $t$:%
\[
\left(  \phi_{\tau}^{0}\right)  _{\ast}\tilde{\mu}_{\Lambda}\left(
t,\cdot\right)  =\tilde{\mu}_{\Lambda}\left(  t,\cdot\right)  ,\quad\left(
\phi_{\tau}^{0}\right)  _{\ast}\tilde{\mu}^{\Lambda}\left(  t,\cdot\right)
=\tilde{\mu}^{\Lambda}\left(  t,\cdot\right)  ,\quad\text{for every }\tau
\in\mathbb{R}%
\]

\end{lemma}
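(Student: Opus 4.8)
The claim is that the two-microlocal objects $\tilde\mu_\Lambda(t,\cdot)$ and $\tilde\mu^\Lambda(t,\cdot)$ are invariant under the flow $\phi^0_\tau(x,\xi,\eta)=(x+\tau\xi,\xi,\eta)$, which is the lift of the geodesic flow that leaves the extra variable $\eta$ untouched. The strategy is to derive, at the level of the $h$-dependent distributions $w^\Lambda_{h,R}(t)$ and $w_{\Lambda,h,R}(t)$, an approximate transport equation in $t$, and then pass to the limit $h\to0^+$ and $R\to\infty$ using the defining formulas. The key input is the commutator identity \eqref{e:weylcomm}, combined with the evolution equation $i\hbar\,\partial_t(U_V(t)u_h)=(-\tfrac{h^2}{2}\Delta+h^2V)(U_V(t)u_h)$ (after rescaling time by $h$, so that $\hbar=h$), which controls $\frac{d}{dt}\langle w_h(t),\cdot\rangle$.

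First I would compute $\frac{d}{dt}\langle w_h(t),\Op_h(b)\rangle = \frac{1}{ih}\langle u_h(t),[{-\tfrac12\Delta}+V,\Op_h(b)]u_h(t)\rangle$ for a test symbol $b$ on $T^*\mathbb{T}^d$ (here $u_h(t):=U_V(t)u_h$). The Laplacian part gives, by \eqref{e:weylcomm}, exactly $\langle w_h(t),\Op_h(\xi\cdot\partial_x b)\rangle$, which is the generator of the geodesic flow; the potential part contributes a term of the form $\frac1{ih}\langle u_h(t),[\Op_h(V),\Op_h(b)]u_h(t)\rangle$, which by symbolic calculus is $O(1)$ (the leading term being $\Op_h(\{V,b\})$, bounded uniformly by Calderón–Vaillancourt) — crucially it carries a prefactor that does \emph{not} blow up, but since $b$ here will be evaluated at $\eta=P_\Lambda(\xi)/h$, I must be careful: the relevant symbols are $b_h(x,\xi)=(1-\chi(P_\Lambda(\xi)/(Rh)))\,a(x,\xi,P_\Lambda(\xi)/h)$ and the $\chi$-counterpart. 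The point is that $\partial_x$ of such a symbol does not see the $\eta$-rescaling (the $\eta$-dependence is through $\xi$, not $x$), and the $x$-Fourier modes of $a$ lie in $\Lambda$, so $\xi\cdot\partial_x b_h = \xi\cdot\partial_x a$ evaluated at the rescaled argument — which is again a symbol in the same class with a controlled seminorm. Hence testing $\frac{d}{dt}\langle w_h(t),\cdot\rangle$ against $\varphi(t)a(x,\xi,P_\Lambda(\xi)/h)$ and integrating by parts in $t$ yields, in the double limit, that $\langle\tilde\mu^\Lambda(t,\cdot),\xi\cdot\partial_x a\rangle=0$ for a.e. $t$, and similarly for $\tilde\mu_\Lambda$. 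This is precisely the infinitesimal version of $\phi^0_\tau$-invariance.

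The steps, in order: (1) write the rescaled Schrödinger evolution and the formula for $\frac{d}{dt}\langle w_h(t),\Op_h(b)\rangle$; (2) insert $b=b_h$ as above and use \eqref{e:weylcomm} to identify the Laplacian contribution as the $\xi\cdot\partial_x$ transport term evaluated at the rescaled argument, noting this stays in $\mathcal{S}^1_\Lambda$ with uniformly bounded seminorms; (3) estimate the potential commutator term: here I need assumption (R) to handle the fact that $V$ is only $L^\infty$ — approximate $V$ by $V_\varepsilon\in C([0,T]\times\mathbb{T}^d)$ off a small compact set, bound the $C^0$ part's commutator by symbolic calculus, and bound the remainder using \eqref{e:proj} and the smallness of $|V-V_\varepsilon|$ off $K_\varepsilon$ together with the fact that the Wigner measure of mass near $K_\varepsilon$ is small; (4) integrate against $\varphi\in L^1(\mathbb{R})$ (supported where the above makes sense), integrate by parts in $t$, pass to $h\to0$ then $R\to\infty$, obtaining $\int\varphi(t)\langle\tilde\mu^\Lambda(t,\cdot),\xi\cdot\partial_x a\rangle\,dt=0$; (5) since $\varphi$ is arbitrary, conclude $\langle\tilde\mu^\Lambda(t),\xi\cdot\partial_x a\rangle=0$ for a.e. $t$, i.e. the distributional identity $\xi\cdot\nabla_x\tilde\mu^\Lambda(t,\cdot)=0$, which is equivalent to $\phi^0_\tau$-invariance; repeat verbatim for $\tilde\mu_\Lambda$.

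The main obstacle I anticipate is step (3): controlling the potential term when $V$ is merely $L^\infty$. With a smooth $V$ the commutator $\frac1{ih}[\Op_h(V),\Op_h(b_h)]=\Op_h(\{V,b_h\})+O(h)$ is immediately bounded, but under (R) one does not have a symbolic calculus for $V$ directly. The fix — which is the technical heart of the lemma — is to split $V=V_\varepsilon+(V-V_\varepsilon)$: for $V_\varepsilon$ continuous one can still make sense of $\Op_h(V_\varepsilon)$ acting boundedly and show the commutator with $\Op_h(b_h)$ tends to $0$ weakly (using density of smooth functions and uniform operator bounds), while for the rough remainder one uses that $\|(V-V_\varepsilon)\,1_{K_\varepsilon^c}\|_\infty\le\varepsilon$ and that $\limsup_h\int 1_{K_\varepsilon}(t,x)|u_h(t,x)|^2\,dx\,dt$ can be made $O(\varepsilon)$ (a standard consequence of continuity of $t\mapsto\|u_h(t)\|_{L^2}$ and regularity of $K_\varepsilon$, via \eqref{e:proj}). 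Letting $\varepsilon\to0$ at the end kills this error. The rest is routine symbolic calculus on the torus plus the weak-$\ast$ convergence already set up before the statement.
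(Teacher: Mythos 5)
Your broad strategy matches the paper's: differentiate $\langle w_h(t),\cdot\rangle$ in time using the Schr\"odinger equation, split the commutator with the Hamiltonian into a Laplacian part (which by \eqref{e:weylcomm} gives the $\frac1h\langle w_h(t),\xi\cdot\partial_x a\rangle$ transport term) and a potential part $\langle\mathcal L_V^h(t),a\rangle$, integrate against $\varphi\in C_c^1(\R)$ and integrate by parts in $t$, insert the rescaled two-microlocal symbol, and pass to the limit $h\To0$, $R\To+\infty$. So far so good; and your observation that $\xi\cdot\partial_x$ preserves $\cS^1_\Lambda$ because the test functions have $x$-Fourier modes in $\Lambda$ is correct and indeed used.

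But there is a genuine misstep in your step (3). You claim the \emph{main obstacle} is estimating the potential commutator, and that assumption (R) is required for it. It is not, and the elaborate approximation argument you sketch is superfluous at this stage. In the paper's bookkeeping (equations \eqref{e:propW}--\eqref{e:LV}) the potential contribution is simply
\[
\left\langle\mathcal L_V^h(t),a\right\rangle
= i\left\langle u_h(t,\cdot),\left[V(t,\cdot),\Op_h(a)\right]u_h(t,\cdot)\right\rangle,
\]
with \emph{no} $\frac1h$ prefactor; it is trivially bounded uniformly in $h$, by the crude estimate
$\left|\langle\mathcal L_V^h(t),a\rangle\right|\le 2\|V\|_{L^\infty}\|\Op_h(a)\|_{L^2\To L^2}$,
using the Calder\'on--Vaillancourt theorem for the uniform operator bound (and nothing else). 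No symbolic calculus involving $V$, no continuity of $V$, no assumption (R). After multiplying through by $h$ via the time integration by parts, one has
\[
\int_{\R}\varphi(t)\left\langle w_h(t),\xi\cdot\partial_x a\right\rangle dt
= -h\int_\R\varphi'(t)\left\langle w_h(t),a\right\rangle dt
  -h\int_\R\varphi(t)\left\langle\mathcal L_V^h(t),a\right\rangle dt,
\]
and both terms on the right are $O(h)$, hence vanish. Your confusion apparently comes from writing the potential term as $\frac1{ih}\langle u_h,[\Op_h(V),\Op_h(b)]u_h\rangle$ and treating it as $O(1)$ via a pseudodifferential expansion that is unavailable for $L^\infty$ $V$: this $\frac1{ih}$ is spurious (it belongs only to the Laplacian term, where \eqref{e:weylcomm} cancels it exactly), and once it is removed the whole issue evaporates.

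Assumption (R) \emph{is} genuinely needed in the paper, but only later (Section~\ref{s:propagation}, Lemma~\ref{l:Cinfty} and Proposition~\ref{p:average}), where one must identify the \emph{limit} of the potential commutator rather than merely kill it by a factor of $h$. Importing that machinery here is not wrong in the sense of producing a false statement, but it betrays a misreading of where the difficulty actually lies, and if carried out as you describe (with the $\frac1h$ prefactor on the potential) the argument would in fact not close, since you would then need $[V,\Op_h(a)]=o(h)$, which is false even for smooth $V$.
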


\begin{proof}
Let $a\in C_{c}^{\infty}\left(  T^{\ast}\mathbb{T}^{d}\right)  $. Then%
\begin{equation}
\frac{d}{dt}\left\langle w_{h}\left(  t\right)  ,a\right\rangle =i\left\langle
u_{h}\left(  t,\cdot\right)  ,\left[  -\frac{1}{2}\Delta+V\left(
t,\cdot\right)  ,\operatorname*{Op}\nolimits_{h}\left(  a\right)  \right]
u_{h}\left(  t,\cdot\right)  \right\rangle . \label{e:comm}%
\end{equation}
Now, using identity (\ref{e:weylcomm}) for the Weyl quantization
we deduce:
\begin{equation}
\frac{d}{dt}\left\langle w_{h}\left(  t\right)  ,a\right\rangle =\frac{1}%
{h}\left\langle w_{h}\left(  t\right)  ,\xi\cdot\partial_{x}a\right\rangle
+\left\langle \mathcal{L}_{V}^{h}\left(  t\right)  ,a\right\rangle ,
\label{e:propW}%
\end{equation}
where%
\begin{equation}
\left\langle \mathcal{L}_{V}^{h}\left(  t\right)  ,a\right\rangle
:=i\left\langle u_{h}\left(  t,\cdot\right)  ,\left[  V\left(  t,\cdot\right)
,\operatorname*{Op}\nolimits_{h}\left(  a\right)  \right]  u_{h}\left(
t,\cdot\right)  \right\rangle . \label{e:LV}%
\end{equation}
Note that this quantity is bounded in $h$ for $t$ varying on a compact set.
Integration in $t$ against a function $\varphi\in C_{c}^{1}\left(
\mathbb{R}\right)  $ gives:%
\[
\int_{\mathbb{R}}\varphi\left(  t\right)  \left\langle w_{h}\left(  t\right)
,\xi\cdot\partial_{x}a\right\rangle dt=-h\int_{\mathbb{R}}\varphi^{\prime
}\left(  t\right)  \left\langle w_{h}\left(  t\right)  ,a\right\rangle
dt-h\int_{\mathbb{R}}\varphi\left(  t\right)  \left\langle \mathcal{L}_{V}%
^{h}\left(  t\right)  ,a\right\rangle dt.
\]
Replacing $a$ in the above identity by%
\[
\chi\left(  \frac{P_{\Lambda}\left(  \xi\right)  }{Rh}\right)  a\left(
x,\xi,\frac{P_{\Lambda}\left(  \xi\right)  }{h}\right)  \mbox{ or} \left(
1-\chi\left(  \frac{P_{\Lambda}\left(  \xi\right)  }{Rh}\right)  \right)
a\left(  x,\xi,\frac{P_{\Lambda}\left(  \xi\right)  }{h}\right)
\]
and letting $h\rightarrow0^{+}$ and $R\rightarrow\infty$ we obtain:%
\[
\left\langle \tilde{\mu}_{\Lambda}\left(  t,\cdot\right)  ,\xi\cdot
\partial_{x}a\right\rangle =0 \mbox{ and }\left\langle \tilde{\mu}^{\Lambda
}\left(  t,\cdot\right)  ,\xi\cdot\partial_{x}a\right\rangle =0
\]
which is the desired invariance property.
\end{proof}

Positivity and invariance properties of the accumulation points $\tilde{\mu}_{\Lambda}\left(
t,\cdot\right)  $ and $\tilde{\mu}^{\Lambda}\left(  t,\cdot\right)  $ are
described in the next two results.

\begin{theorem}
\label{thm 1st2micro}(i)  For a.e. $t\in\mathbb{R}$, $\tilde{\mu}^{\Lambda}\left(  t,\cdot\right)
$ is positive, $0$-homogeneous and
supported at infinity in the variable $\eta$ ($i.e.$, it vanishes when paired
with a compactly supported function). As a consequence, $\tilde{\mu}^{\Lambda
}\left(  t,\cdot\right)  $ may be identified \footnote{\label{f:homogen}More precisely, there exists a positive measure $M^{\Lambda}\left(  t,\cdot\right)
$ on $T^{\ast}\mathbb{T}^{d}\times\mathbb{S}_{\la\Lambda\ra}$
such that $\int_{T^*\T^d\times\la \Lambda\ra} a(x, \xi, \eta)\tilde{\mu}^{\Lambda}\left(  t,d\xi, d\eta\right)=
\int_{T^*\T^d\times\mathbb{S}_{\la\Lambda\ra}} a(x, \xi,\infty \eta)M^{\Lambda}\left(  t,d\xi, d\eta\right).$
For simplicity we will identify $M^{\Lambda}\left(  t,\cdot\right)$ and $\tilde{\mu}^{\Lambda
}\left(  t,\cdot\right)  $, and we will write the integrals in the most convenient way according to the context.
} with a positive measure on
$T^{\ast}\mathbb{T}^{d}\times\mathbb{S}_{\la\Lambda\ra} $.

For a.e. $t\in\mathbb{R}$, the projection of $\tilde{\mu}_{\Lambda
}\left(  t,\cdot\right)  $
on $T^{\ast}\mathbb{T}^{d}$ is a positive measure.

(ii) Both   $\tilde{\mu}^{\Lambda}\left(  t,\cdot\right)
$ and $\tilde{\mu}_{\Lambda
}\left(  t,\cdot\right)  $ are $\phi_{\tau}^{0}$-invariant.

(iii) Let
\[
\mu^{\Lambda}\left(  t,\cdot\right)  :=\int_{\left\langle \Lambda\right\rangle
}\tilde{\mu}^{\Lambda}\left(  t,\cdot,d\eta\right)  \rceil_{(x, \xi)\in\mathbb{T}%
^{d}\times R_{\Lambda}},\quad\mu_{\Lambda}\left(  t,\cdot\right)
:=\int_{ {\la\Lambda\ra}}\tilde{\mu}_{\Lambda}\left(  t,\cdot
,d\eta\right)  \rceil_{(x, \xi)\in\mathbb{T}^{d}\times R_{\Lambda}}.
\]
Then both $\mu^{\Lambda}\left(  t,\cdot\right)  $ and $\mu_{\Lambda}\left(
t,\cdot\right)  $ are positive measures on $T^{\ast}\mathbb{T}^{d}$, invariant
by the geodesic flow, and satisfy:%
\begin{equation}
\mu\left(  t,\cdot\right)  \rceil_{\mathbb{T}^{d}\times R_{\Lambda}}%
=\mu^{\Lambda}\left(  t,\cdot\right)  +\mu_{\Lambda}\left(  t,\cdot\right)  .
\label{decomposition}%
\end{equation}

\end{theorem}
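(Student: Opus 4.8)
The plan is to prove Theorem \ref{thm 1st2micro} in three stages, corresponding to its three assertions, relying on the fact that $w_{h,R}^{\Lambda}$ and $w_{\Lambda,h,R}$ are obtained by testing the (positive) Wigner distributions $w_h(t)$ against symbols of a specific form, and then passing to the double limit $h\to0^+$, $R\to\infty$.

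\emph{Proof of (i): positivity and homogeneity.} First I would establish positivity of $\tilde\mu^\Lambda(t,\cdot)$. The key point is that for $a\in\mathcal S_\Lambda^1$ with $a\geq0$, the symbol $(x,\xi)\mapsto \bigl(1-\chi(P_\Lambda(\xi)/(Rh))\bigr)\,a\bigl(x,\xi,P_\Lambda(\xi)/h\bigr)$ is, up to an $O(h)$ error in operator norm coming from the Calder\'on--Vaillancourt theorem and the symbolic calculus (the cutoff and the rescaled $\eta$-variable produce lower-order terms), a nonnegative symbol whose Weyl quantization is $\geq -Ch$ as an operator; hence $\langle w_{h,R}^\Lambda(t),a\rangle\geq -Ch\|u_h\|^2$, and letting $h\to0$ then $R\to\infty$ gives $\langle\tilde\mu^\Lambda(t,\cdot),a\rangle\geq0$ for a.e.\ $t$ (after integrating against $\varphi\geq0$ and using a density/countability argument over a dense set of test functions $a$). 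The same reasoning applied to symbols $a$ \emph{independent of} $\eta$ (so that $\chi(P_\Lambda(\xi)/(Rh))\,a(x,\xi)$ is genuinely nonnegative) gives positivity of the projection of $\tilde\mu_\Lambda(t,\cdot)$ onto $T^*\T^d$. For the support-at-infinity statement: if $a\in\mathcal S_\Lambda^1$ is compactly supported in $\eta$, say $\supp a\subset\{|\eta|\le R_1\}$, then for $R>R_1$ the factor $1-\chi(P_\Lambda(\xi)/(Rh))$ forces $|P_\Lambda(\xi)|>c R h$ on the support of the oscillating factor while $a(x,\xi,P_\Lambda(\xi)/h)$ requires $|P_\Lambda(\xi)|/h\le R_1<R$; these are incompatible once $R>R_1/c$, so $\langle w_{h,R}^\Lambda(t),a\rangle=0$ for $R$ large, whence $\langle\tilde\mu^\Lambda(t,\cdot),a\rangle=0$. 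Combined with $0$-homogeneity in $\eta$ (inherited from condition (ii) defining $\mathcal S_\Lambda^1$: $a$ is $0$-homogeneous for $|\eta|>R_0$, and the support-at-infinity property means only the homogeneous part matters), this yields the identification with a positive measure $M^\Lambda(t,\cdot)$ on $T^*\T^d\times\mathbb S_{\la\Lambda\ra}$ by the standard disintegration of a $0$-homogeneous measure supported at infinity into a radial part (a point mass at $\infty$) and an angular part.

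\emph{Proof of (ii): $\phi_\tau^0$-invariance.} This is exactly Lemma \ref{Lemma Inv}, which has already been proved in the excerpt; I would simply invoke it.

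\emph{Proof of (iii): the decomposition.} I would first check that $\mu^\Lambda(t,\cdot)$ and $\mu_\Lambda(t,\cdot)$, defined by integrating out $\eta$ and restricting $(x,\xi)$ to $\T^d\times R_\Lambda$, are well-defined positive measures: the $\eta$-integral of $\tilde\mu^\Lambda$ is finite because (by part (i)) it equals the total mass of $M^\Lambda(t,\cdot)$ in the angular variable, which is dominated by the mass of $\mu(t,\cdot)$; similarly for $\mu_\Lambda$. Geodesic-flow invariance of both follows from part (ii): $\phi_\tau^0$ projects to $\phi_\tau$ on $T^*\T^d$, the restriction to $\T^d\times R_\Lambda$ is preserved because $R_\Lambda$ is a set of $\xi$'s and $\phi_\tau$ fixes $\xi$, and integrating out $\eta$ commutes with the pushforward since $\phi_\tau^0$ acts trivially on $\eta$. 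The heart of (iii) is identity (\ref{decomposition}). Here I would start from Remark \ref{Rmk Sum}: for $a\in\mathcal S_\Lambda^1$,
\[
\int_{T^*\T^d} a\Bigl(x,\xi,\frac{P_\Lambda(\xi)}{h}\Bigr)w_h(t)(dx,d\xi)=\langle w_{h,R}^\Lambda(t),a\rangle+\langle w_{\Lambda,h,R}(t),a\rangle.
\]
Taking a symbol $a(x,\xi,\eta)=b(x,\xi)$ independent of $\eta$, with $b$ having only $x$-Fourier modes in $\Lambda$, the left-hand side converges (as $h\to0$) to $\langle\la\mu(t,\cdot)\ra_\Lambda,b\rangle$ by definition of the Wigner limit and of $\la\cdot\ra_\Lambda$; by Lemma \ref{LemmaFourierInvariant}, $\la\mu(t,\cdot)\ra_\Lambda\rceil_{\T^d\times R_\Lambda}=\mu(t,\cdot)\rceil_{\T^d\times R_\Lambda}$, so testing further against a function supported in $\xi\in R_\Lambda$ picks out $\langle\mu(t,\cdot)\rceil_{\T^d\times R_\Lambda},b\rangle$. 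On the right-hand side, $\langle w_{h,R}^\Lambda(t),b\rangle\to\langle\tilde\mu^\Lambda(t,\cdot),b\rangle$ and then, integrating the trivial $\eta$-dependence and restricting to $R_\Lambda$, to $\langle\mu^\Lambda(t,\cdot),b\rangle$; likewise $\langle w_{\Lambda,h,R}(t),b\rangle\to\langle\mu_\Lambda(t,\cdot),b\rangle$. Since, by Lemma \ref{LemmaFourierInvariant} again, $\mu(t,\cdot)\rceil_{\T^d\times R_\Lambda}$ is determined by its action on test functions with $x$-Fourier modes in $\Lambda$, this establishes (\ref{decomposition}).

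\emph{Main obstacle.} The delicate point is the interchange of the limits $h\to0^+$ and $R\to\infty$ with the various restrictions and $\eta$-integrations, and making sure no mass is lost or double-counted at the junction $|P_\Lambda(\xi)|\sim Rh$ between the two regimes. Concretely, one must verify that the $O(h)$ errors from the Weyl calculus are uniform in $R$ on the relevant symbol classes (this is where assumption (R) on $V$ and the precise structure of $\mathcal S_\Lambda^1$ enter, to control $\langle\mathcal L_V^h(t),a\rangle$ and the commutator remainders), and that the region $|P_\Lambda(\xi)|/h$ of order $R$ carries vanishing mass in the iterated limit — which is what legitimises splitting $w_h(t)$ cleanly into a piece concentrating \emph{exactly} on scale $h$ transversally to $\Lambda^\perp$ (captured by $\tilde\mu_\Lambda$) and a piece concentrating at a \emph{slower than $h$} scale, i.e.\ ``at infinity'' in $\eta$ (captured by $\tilde\mu^\Lambda$). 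I would isolate this as a separate lemma on the two-microlocal splitting, modeled on Fermanian-Kammerer's construction in \cite{Fermanian2micro}, adapted to the global torus setting.
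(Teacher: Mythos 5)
Your proposal follows the same overall route as the paper for all three parts: a G\aa rding-type positivity argument plus a support-at-infinity observation for (i), an invocation of Lemma \ref{Lemma Inv} for (ii), and Remark \ref{Rmk Sum} combined with Lemma \ref{LemmaFourierInvariant} for (iii). Three remarks on the details.

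First, the G\aa rding error for $\tilde\mu^\Lambda$ is \emph{not} $\cO(h)$ as you claim. For the two-microlocal symbol $(1-\chi(P_\Lambda\xi/(Rh)))\,a(x,\xi,P_\Lambda\xi/h)$ the $\xi$-derivatives are $\cO(h^{-1})$, so the usual symbolic-calculus gain of one power of $h$ per remainder term is exactly cancelled; the naive estimate only gives $\cO(1)$. What saves the argument --- and this is the content of Corollary \ref{CoroCV} in the appendix --- is the $0$-homogeneity of $a$ in $\eta$ together with the localization $|P_\Lambda\xi|\gtrsim Rh$ enforced by $1-\chi$, which turns the bound into $\cO(R^{-1})$. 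Your double limit $h\to0$ then $R\to\infty$ still kills this, so the conclusion is unchanged, but the mechanism you describe (``the cutoff and the rescaled $\eta$-variable produce lower-order terms'') is precisely what is \emph{not} true, and identifying the $R^{-1}$ gain is the nontrivial point.

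Second, for the positivity of the projection of $\tilde\mu_\Lambda$ onto $T^*\T^d$ the paper does not run a separate G\aa rding argument: it develops in \S\ref{s:sy} the operator-valued measure $\tilde\rho_\Lambda$ (Proposition \ref{p:weakstarlimit}) and expresses $\tilde\mu_\Lambda$ as a trace against it (formula \eqref{e:muLtilda}), from which positivity of the projection is immediate, and which is in any case indispensable later for the propagation law of \S\ref{s:propagation}. Your direct G\aa rding route for this piece is also viable, but you will need essentially the first (unnumbered) corollary of the appendix rather than plain Calder\'on--Vaillancourt, and you lose the structure that \S\ref{s:propagation} requires.

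Third, your ``main obstacle'' paragraph overstates the role of assumption (R): it is used only for the propagation law, not for Theorem \ref{thm 1st2micro}. In this theorem $V$ enters only through the commutator term $\cL_V^h$ in Lemma \ref{Lemma Inv}, for which $\|V\|_{L^\infty}$ suffices, and parts (i) and (iii) are entirely independent of $V$.
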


Note that identity (\ref{decomposition}) is a consequence of the decomposition
property expressed in Remark \ref{Rmk Sum}.

The following result is the key step of our proof, it states that
both $\mu^{\Lambda}$ and $\mu_{\Lambda}$ have some extra
regularity in the variable $x$, for two different reasons~:

\begin{theorem}
\label{Thm Properties}(i) For a.e. $t\in\mathbb{R}$,
$\tilde\mu_{\Lambda}\left(  t,\cdot\right)  $ is concentrated on $\mathbb{T}%
^{d}\times\Lambda^{\bot}\times\la\Lambda\ra$ and its projection on
$\mathbb{T}^{d}$ is absolutely continuous with respect to the Lebesgue measure.

(ii) For a.e. $t\in\mathbb{R}$, the measure $\tilde{\mu}^{\Lambda}\left(
t,\cdot\right)  $ satisfies the invariance property:%
\begin{equation}
\left(  \phi_{\tau}^{1}\right)  _{\ast}\tilde{\mu}^{\Lambda}\left(
t,\cdot\right)  =\tilde{\mu}^{\Lambda}\left(  t,\cdot\right)  ,\quad
\tau\in\mathbb{R}. \label{2minv}%
\end{equation}

\end{theorem}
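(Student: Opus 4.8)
\textbf{Proof proposal for Theorem \ref{Thm Properties}.}

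The plan is to treat the two assertions separately, exploiting the two different concentration scales built into $w^\Lambda_{h,R}$ and $w_{\Lambda,h,R}$. For part (ii), the strategy is to derive a second (finer) propagation law for $w_{\Lambda,h,R}(t)$, beyond the coarse one used in Lemma \ref{Lemma Inv}. The key point is that on the support of $\chi(P_\Lambda(\xi)/(Rh))$ one has $P_\Lambda(\xi)=O(Rh)$, so the two-microlocal variable $\eta\approx P_\Lambda(\xi)/h$ stays bounded by $R$, while the ``slow'' part of $\xi$, namely $\xi-P_\Lambda(\xi)\in\Lambda^\perp$, is $O(1)$. I would go back to \eqref{e:propW}, split $\xi\cdot\partial_x=\bigl(\xi-P_\Lambda(\xi)\bigr)\cdot\partial_x+P_\Lambda(\xi)\cdot\partial_x$, and observe that the first term produces the $\phi^0_\tau$-invariance already recorded, whereas the second term, after inserting the cutoff and rescaling, has size $P_\Lambda(\xi)/h=O(\eta)$; testing against a symbol $a\in\mathcal{S}^1_\Lambda$ that depends on $\eta$ only through $\eta/|\eta|$ at infinity, and using that $\tilde\mu^\Lambda(t,\cdot)$ lives at $\eta=\infty$, the term $P_\Lambda(\xi)\cdot\partial_x a$ becomes, in the limit, $|\eta|\,\frac{\eta}{|\eta|}\cdot\partial_x a_{\text{hom}}$, i.e. proportional to $\partial_\tau$ along the flow $\phi^1_\tau$. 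The extra factor $|\eta|\to\infty$ must be absorbed by an appropriate power of $h$ coming from the $h$ in front of $\varphi'$ and of $\mathcal{L}^h_V$ in the integrated identity; this is exactly the mechanism by which the finer invariance \eqref{2minv} appears. The main obstacle here will be bookkeeping the precise power of $h$ and justifying that the potential term $\mathcal{L}^h_V(t)$, now tested against an oscillating symbol with an $\eta$-dependence, still tends to zero after the double limit $h\to0$, $R\to\infty$; this is where regularity condition \textbf{(R)} enters, allowing one to approximate $V$ by continuous potentials so that the commutator $[V,\Op_h(\cdot)]$ is $o(1)$ in the relevant operator norm.

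For part (i) the first claim — that $\tilde\mu_\Lambda(t,\cdot)$ is concentrated on $\mathbb{T}^d\times\Lambda^\perp\times\la\Lambda\ra$ — is essentially built into the definition \eqref{subL}: the cutoff $\chi(P_\Lambda(\xi)/(Rh))$ forces $P_\Lambda(\xi)\to0$ as $h\to0$ for fixed $R$, so in the limit the $\xi$-variable lies in $\Lambda^\perp=\{P_\Lambda(\xi)=0\}$; I would make this precise by testing against a symbol supported away from $\Lambda^\perp$ in $\xi$ and checking the pairing vanishes. The substantial claim is the absolute continuity of the projection onto $\mathbb{T}^d$. Here the idea is to use the $\phi^0_\tau$-invariance from Lemma \ref{Lemma Inv} together with the fact that, on $R_\Lambda$, the flow direction $\xi\in\Lambda^\perp$ generates (by the ergodicity statement recalled before Lemma \ref{Lemma decomposition}) translations dense in $\mathbb{T}_{\Lambda^\perp}$, so that $\tilde\mu_\Lambda(t,\cdot)$ restricted over $R_\Lambda$ is invariant under all translations by $v\in\Lambda^\perp$ (cf.\ Lemma \ref{LemmaFourierInvariant}), hence its $x$-Fourier modes live in $\Lambda$; this reduces matters to a measure on the lower-dimensional torus $\mathbb{T}_\Lambda$. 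Absolute continuity on $\mathbb{T}_\Lambda$ should then follow from a further propagation argument in the two-microlocal variable $\eta$ and a positivity/compactness input — morally, one writes the projected measure as $\int m_{\la b\ra_\Lambda}$ paired against a nonnegative operator (a trace, as in Theorem \ref{t:precise}) and uses that such a trace defines an $L^1$ density. I expect the heart of the matter, and the main obstacle, to be precisely this last point: showing that the limiting object $\mu_\Lambda$ is not merely a measure but has an $L^1$ density, which requires identifying $\tilde\mu_\Lambda$ with (the trace against) an operator-valued measure and invoking that trace-class operators have summable eigenvalues; the delicate step is the passage to the limit $h\to0$ that produces this operator from the Wigner distributions $w_{\Lambda,h,R}$, controlling both the oscillation in $x$ at scale matching $\Lambda$ and the concentration scale $h$ in $\eta$.

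In both parts the logical skeleton is the same as in Lemma \ref{Lemma Inv}: start from the exact evolution identity \eqref{e:propW}, substitute the two-microlocal test symbol, integrate against $\varphi\in C^1_c(\mathbb{R})$, and take the iterated limit. What is new is that one keeps track of the $\eta$-dependence and of one extra order of smallness. I would organize the argument so that part (ii) is proved first (it is closest to Lemma \ref{Lemma Inv} and needs only the transport structure plus (R)), and then use the structure it provides — together with the geometric fact that on $R_\Lambda$ the $\phi^0$-orbits equidistribute on $\mathbb{T}_{\Lambda^\perp}$ — to establish the concentration and absolute continuity in part (i). Throughout, condition \textbf{(R)} is used only to guarantee that the potential contributions $\mathcal{L}^h_V(t)$ vanish in the limit after the cutoffs are inserted, by density of continuous potentials and the Calderón--Vaillancourt bound on the commutators.
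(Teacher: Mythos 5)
Your outline for part (ii) matches the paper's argument in spirit: one uses the transport identity \eqref{e:propW}, the observation that $\xi\cdot\partial_x a = P_\Lambda(\xi)\cdot\partial_x a$ for $a$ with $x$-Fourier modes in $\Lambda$, and then tests against the rescaled symbol $\frac{1}{|\eta|}a^R$, whose Calder\'on--Vaillancourt norm is $\mathcal{O}(1/R)$ because $|\eta|>R$ on the support of $1-\chi(\eta/R)$; this makes both the $\varphi'$ term and the commutator $\mathcal{L}^h_V$ term vanish in the iterated limit. However, you invoke condition \textbf{(R)} here, and that is not needed: the paper's estimate $\limsup_h\|[V,B^\Lambda_{h,R}]\|\le C R^{-1}\|V\|_{L^\infty}$ uses only $V\in L^\infty$. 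The $1/R$ smallness is purely a symbol-size effect, independent of any approximation of $V$. Condition (R) only enters later, in Section~\ref{s:propagation}, for the propagation law of $\tilde\rho_\Lambda$, and there it is used \emph{after} the absolute continuity of part~(i) has been established, not in the proof of this theorem.

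For part (i), your high-level picture is right — the concentration on $\mathbb{T}^d\times\Lambda^\perp\times\langle\Lambda\rangle$ is built into the cutoff $\chi(P_\Lambda\xi/(Rh))$, and absolute continuity should come from representing the projected measure as a trace against a positive trace-class operator — but the crucial step, which you flag as ``the delicate step'' and ``the main obstacle,'' is exactly the one that is not given, and it is the substance of the proof. Specifically: what makes the limit an operator-valued measure $\tilde\rho_\Lambda$ with values in $\mathcal{L}^1(L^2(\mathbb{T}_\Lambda))$ is the passage to the covering $\mathbb{T}_{\Lambda^\perp}\times\mathbb{T}_\Lambda$ via the isometry $T_\Lambda$, under which $\Op_h(a)$ factors as $\Op_h^{\Lambda^\perp}\Op_h^\Lambda$, and the key point is that the quantization in the $\mathbb{T}_\Lambda$ variable appears at scale~$1$ (not $h$), i.e. as $\Op_1^\Lambda(a_{R,\Lambda}^h(\sigma,\cdot))$, with a symbol compactly supported in $\eta$. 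It is this change of scale — $\eta$ becoming the full (unscaled) dual variable of $y\in\mathbb{T}_\Lambda$ — that makes the quantized symbol a compact operator and the limit trace-class, from which absolute continuity follows because $b\mapsto\Tr(m_b\rho)$ has the $L^1$ density $y\mapsto\rho(y,y)$. Without this construction (the maps $\chi_\Lambda$, $\pi_\Lambda$, $T_\Lambda$, the rescaled symbol $a^h_{R,\Lambda}$, the quantity $n^\Lambda_h(t)$, and Proposition~\ref{p:weakstarlimit}), there is no route from $w_{\Lambda,h,R}$ to the trace formula. Finally, your proposed logical order (prove (ii) first and ``use the structure it provides'' for (i)) does not work as stated: parts (i) and (ii) concern the two \emph{disjoint} pieces $\tilde\mu_\Lambda$ and $\tilde\mu^\Lambda$ of the decomposition, and the paper proves them independently; the $\phi^1_\tau$-invariance of $\tilde\mu^\Lambda$ gives no information about $\tilde\mu_\Lambda$.
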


\begin{remark}
As we shall prove in Section \ref{s:propagation}, the distributions $\tilde\mu_{\Lambda
}\left(  t,\cdot\right)  $ verify a propagation law that is related to unitary
propagator generated by the self-adjoint operator $\frac{1}{2}\Delta
+\left\langle V\right\rangle _{\Lambda}\left(  t,\cdot\right)  $, where
$\left\langle V\right\rangle _{\Lambda}$ denotes the average of $V$ along
$\Lambda^{\perp}$.
\end{remark}

\begin{remark}
\label{r:nice}The invariance property (\ref{2minv}) provides $\tilde{\mu
}^{\Lambda}$ with additional regularity. This is clearly seen when
$\operatorname*{rk}\Lambda=1$. In that case, (\ref{2minv}) implies that, for a.e. $t\in\mathbb{R}$, the measure
$\tilde{\mu}^{\Lambda}\left(  t,\cdot\right)  $ satisfies for every
$v\in\mathbb{S}_{\la\Lambda\ra} $:%
\begin{equation}
\left(  \tau_{s}^{v}\right)  _{\ast}\tilde{\mu}^{\Lambda}\left(
t,\cdot\right)  \rceil_{\mathbb{T}^{d}\times R_{\Lambda}\times
 {\la\Lambda\ra} }=\tilde{\mu}^{\Lambda}\left(
t,\cdot\right) \rceil_{\mathbb{T}^{d}\times
R_{\Lambda}\times {\la\Lambda\ra}
},\quad s\in\mathbb{R}\text{.} \label{invmusup}%
\end{equation}
On the other hand,  Lemma
\ref{LemmaFourierInvariant} implies that
(\ref{invmusup}) also holds for every $v\in\Lambda^{\perp}$. Therefore, we
conclude that $\tilde{\mu}^{\Lambda}\left(  t,\cdot\right)  \rceil
_{\mathbb{T}^{d}\times R_{\Lambda}\times{\la\Lambda\ra} }$ is
constant in $x\in\mathbb{T}^{d}$ in this case.
\end{remark}

\begin{remark}
Theorems \ref{thm 1st2micro} (iii), and \ref{Thm Properties} (i), together with
Lemma \ref{Lemma decomposition} imply that, for a.e. $t\in\mathbb{R}$, we have
a decomposition:%
\[
\mu\left(  t,\cdot\right)  =\sum_{\Lambda\in\mathcal{L}}\mu^{\Lambda}\left(
t,\cdot\right)  +\sum_{\Lambda\in\mathcal{L}}\mu_{\Lambda}\left(
t,\cdot\right)  ,
\]
where the second term in the above sum defines a positive measure whose
projection on $\mathbb{T}^{d}$ is absolutely continuous with respect to the
Lebesgue measure.
\end{remark}

The rest of this section is devoted to the proofs of Theorems
\ref{thm 1st2micro} and \ref{Thm Properties}.

\subsection{Computation and structure of $\tilde{\mu}_{\Lambda}$\label{s:sy}}

We use the linear isomorphism%
\[
\chi_{\Lambda}:\Lambda^{\perp}\times\left\langle
\Lambda\right\rangle \rightarrow\mathbb{R}^{d}:\left(  s,y\right)  \mapsto s+y
\]
and denote by
$\tilde{\chi}_{\Lambda}:T^{\ast}\Lambda^{\perp}\times T^{\ast
}\left\langle \Lambda\right\rangle \rightarrow
T^{\ast}\mathbb{R}^{d}$ the induced canonical transformation. Explicitly, $\tilde{\chi}_{\Lambda}$
goes as follows~: let $(s, \sigma)\in T^{\ast}\Lambda^{\perp}=\Lambda^{\perp}\times \left(\Lambda^{\perp}\right)^*$ and $(y, \eta)\in T^{\ast
}\left\langle \Lambda\right\rangle=\left\langle \Lambda\right\rangle\times \left\langle \Lambda\right\rangle^*$. Extend $\sigma$ to a linear form on $\R^d$ vanishing on $\left\langle \Lambda\right\rangle$, and $\eta$ to a linear form on $\R^d$ vanishing on $\Lambda^{\perp}$.
Then $\tilde{\chi}_{\Lambda}(s, \sigma, y, \eta)=(s+y, \sigma+\eta)\in T^*\R^d=\R^d\times (\R^d)^*$.

The map $\chi_{\Lambda}$ goes to the quotient and gives a smooth Riemannian covering~:
\[
\pi_{\Lambda}:\mathbb{T}_{\Lambda^{\perp}}\times\mathbb{T}_{\Lambda
}\rightarrow\mathbb{T}^{d}:\left(  s,y\right)  \mapsto s+y;
\] $\tilde{\pi}_{\Lambda}$ will denote its
extension to the cotangent bundles $T^{\ast}\mathbb{T}_{\Lambda^{\perp}}\times
T^{\ast}\mathbb{T}_{\Lambda}\rightarrow T^{\ast}\mathbb{T}^{d}$. Let
$p_\Lambda$ denote the degree of $\pi_\Lambda$. 

There is a linear
isomorphism $T_{\Lambda
}:L_{\text{loc}}^{2}\left(  \mathbb{R}^{d}\right)  \rightarrow L_{\text{loc}%
}^{2}\left(  \Lambda^{\perp}\times\left\langle
\Lambda\right\rangle \right)  $ given by
\[
T_{\Lambda}u:=\frac{1}{\sqrt{p_{\Lambda}}}\left(u\circ\chi_{\Lambda}\right).
\]
Note that because of the factor $p_{\Lambda}^{-1/2}$,
$T_{\Lambda}$ maps
$L^{2}\left(  \mathbb{T}^{d}\right)  $ isometrically into a subspace of $L^{2}\left(  \mathbb{T}%
_{\Lambda^{\perp}}\times\mathbb{T}_{\Lambda}\right)  =L^{2}\left(
\mathbb{T}_{\Lambda^{\perp}};L^{2}\left(  \mathbb{T}_{\Lambda}\right)
\right)  $. Moreover, $T_{\Lambda}$ maps $L_{\Lambda}%
^{2}\left(  \mathbb{T}^{d}\right)  $ into $L^{2}\left(  \mathbb{T}_{\Lambda
}\right)  \subset L^{2}\left(  \mathbb{T}_{\Lambda^{\perp}}\times
\mathbb{T}_{\Lambda}\right)  $, since if the non-vanishing Fourier modes of
$u$ correspond only to frequencies $k\in\Lambda$, then
\begin{equation}
T_{\Lambda}u\left(  s,y\right)  =\frac{1}{\sqrt{p_{\Lambda}}}u\left(  y\right)  \quad\text{for every }%
s\in\mathbb{T}_{\Lambda^{\perp}}. \label{constantins}%
\end{equation}
Since $\tilde{\chi}_{\Lambda}$ is linear, the following holds for any $a\in
C^{\infty}\left(  T^{\ast}\mathbb{R}^{d}\right)  $:%
\[
T_{\Lambda}\Op_{h}\left(  a\right)  =\Op_{h}\left(  a\circ\tilde{\chi
}_{\Lambda}\right)  T_{\Lambda}.
\]
Denote by $\Op_{h}^{\Lambda^{\perp}}$ and $\Op_{h}^{\Lambda}$ the Weyl
quantization operators defined on smooth test functions on $T^{\ast}%
\Lambda^{\perp}\times T^{\ast}\left\langle \Lambda\right\rangle $ which act on
the variables $T^{\ast}\Lambda^{\perp}$ and $T^{\ast}\left\langle
\Lambda\right\rangle $ respectively, leaving the other frozen. The composition
$\Op_{h}^{\Lambda^{\perp}}\Op_{h}^{\Lambda}$ gives the whole Weyl quantization
$\Op_{h}$ on $T^{\ast}\Lambda^{\perp}\times T^{\ast}\left\langle
\Lambda\right\rangle $. Now, if $a\in\mathcal{S}_{\Lambda}^{1}$ we have, in
view of (\ref{constantins}), that $a\circ\tilde{\pi}_{\Lambda}$ does not
depend on $s\in\mathbb{T}_{\Lambda^{\perp}}$ and therefore we write $a\circ\tilde{\pi}_{\Lambda}(\sigma, y, \eta)$ for $a\circ\tilde{\pi}_{\Lambda}(s, \sigma, y, \eta)$. We have
\begin{equation}
T_{\Lambda}\Op_{h}\left(  a\right)  =\Op_{h}^{\Lambda}\left(  a\circ\tilde
{\pi}_{\Lambda}\left(  hD_{s},\cdot\right)  \right)  T_{\Lambda}.
\label{ChangeCoord}%
\end{equation}
Note that for every $\sigma\in\Lambda^{\perp}$, the operators $\Op_{h}%
^{\Lambda}\left(  a\circ\tilde{\pi}_{\Lambda}\left(  \sigma,\cdot\right)
\right)  $ map $L^{2}\left(  \mathbb{T}_{\Lambda}\right)  $ into itself. To be even more precise, it maps  the subspace
$T_\Lambda(L^2_\Lambda(\T^d))$ into itself.

\begin{remark}
\label{rmkCompact} Let $a\in\mathcal{S}_{\Lambda}^{1}$; set $a_{R}\left(
x,\xi,\eta\right)  :=\chi\left(  \eta/R\right)  a\left(  x,\xi,\eta\right)  $
and define $a_{R,\Lambda}^{h}\in C_{c}^{\infty}\left(  \Lambda^{\perp}\times
T^{\ast}\mathbb{T}_{\Lambda}\right)  $ by%
\[
a_{R,\Lambda}^{h}\left(  \sigma,y,\eta\right)  :=a_{R}\left(  \tilde{\pi
}_{\Lambda}\left(  \sigma,y,h\eta\right)  ,\eta\right)  =a_R(y, \sigma+h\eta, \eta),\quad\left(
y,\eta\right)  \in T^{\ast}\mathbb{T}_{\Lambda},\quad\sigma\in\Lambda^{\perp
}.
\]
It is simple to check that (\ref{ChangeCoord}) gives:
\[
T_{\Lambda}\Op_{h}\left(  a\right)
T_{\Lambda}^{\ast}=\Op_{1}^{\Lambda }\left(
a_{R,\Lambda}^{h}\left(  hD_{s},\cdot\right)  \right)           ,
\]
and
\[
\left\langle w_{\Lambda,h,R}\left(  t\right)  ,a\right\rangle
=\left\langle T_{\Lambda}u_{h}\left(  t,\cdot\right)  ,
\Op_{1}^{\Lambda}\left(  a_{R,\Lambda}^{h}\left(
hD_{s},\cdot\right) \right)  T_{\Lambda}u_{h}\left( t,\cdot\right)
\right\rangle _{L^{2}\left(
\mathbb{T}_{\Lambda^{\perp}};L^{2}\left(
\mathbb{T}_{\Lambda}\right) \right)  }.
\]

Note that for every $R>0$, $t\in\mathbb{R}$ and $\left(  s,\sigma\right)  \in
T^{\ast}\mathbb{T}_{\Lambda^{\perp}}$, the operator
\[
\Op_{1}^{\Lambda}\left(  a_{R,\Lambda}^{h}\left(  \sigma,\cdot\right)
\right)
\]
is compact on $L^{2}\left(  \mathbb{T}_{\Lambda}\right)  $, since
$a_{R,\Lambda}^{h}$ is compactly supported in the variable $\eta$.
\end{remark}

Given a Hilbert space $H$, denote respectively by $\mathcal{K}\left(
H\right)  $ and $\mathcal{L}^{1}\left(  H\right)  $ the spaces of compact and
trace class operators on $H$. A measure on a polish space $T$, taking values
in $\mathcal{L}^{1}\left(  H\right)  $, is defined as a bounded linear
functional $\rho$ from $C_{c}\left(  T\right)  $ to $\mathcal{L}^{1}\left(
H\right)  $; $\rho$ is said to be positive if, for every nonnegative $b\in
C_{c}\left(  T\right)  $, $\rho\left(  b\right)  $ is a positive hermitian
operator. The set of such measures is denoted by $\mathcal{M}_{+}\left(
T;\mathcal{L}^{1}\left(  H\right)  \right)  $; they can be identified in a
natural way to positive linear functionals on $C_{c}\left(  T;\mathcal{K}%
\left(  H\right)  \right)  $. Background and further details on
operator-valued measures may be found for instance in \cite{GerardMDM91}.

In view of Remark \ref{rmkCompact}, it turns out that the limiting object
relevant in the computation of $\tilde{\mu}_{\Lambda}$ is the one presented in
the next result. For $K\in C_{c}^{\infty}\left(  T^{\ast}\mathbb{T}%
_{\Lambda^{\perp}};\mathcal{K}\left(  L^{2}\left(  \mathbb{T}_{\Lambda
}\right)  \right)  \right)  $ denote:%
\begin{equation}
\label{e:nh}\left\langle n_{h}^{\Lambda}\left(  t\right)
,K\right\rangle :=\left\langle T_{\Lambda}U_{V}\left(  t\right)
u_{h},K\left(  s,hD_{s}\right)  T_{\Lambda}U_{V}\left(  t\right)
u_{h}\right\rangle _{L^{2}\left(
\mathbb{T}_{\Lambda^{\perp}};L^{2}\left(
\mathbb{T}_{\Lambda}\right)  \right)  }.
\end{equation}

\begin{proposition}
\label{p:weakstarlimit} Suppose $\left(  u_{h}\right)  $ is bounded in
$L^{2}\left(  \mathbb{T}^{d}\right)  $. Then, modulo a subsequence, the
following convergence takes place:
\begin{equation}
\label{e:rholambda}\lim_{h\rightarrow0^{+}}\int_{\mathbb{R}}\varphi\left(
t\right)  \left\langle n_{h}^{\Lambda}\left(  t\right)  ,K\right\rangle
dt=\int_{\mathbb{R}}\varphi\left(  t\right)  \Tr\int_{T^{\ast}\mathbb{T}%
_{\Lambda^{\perp}}}K\left(  s,\sigma\right)  \tilde{\rho}_{\Lambda}\left(
t,ds,d\sigma\right)  dt,
\end{equation}
for every $K\in C_{c}^{\infty}\left(  T^{\ast}\mathbb{T}_{\Lambda^{\perp}%
};\mathcal{K}\left(  L^{2}\left(  \mathbb{T}_{\Lambda}\right)  \right)
\right)  $ and every $\varphi\in L^{1}\left(  \mathbb{R}\right)  $; in other
words, $\tilde{\rho}_{\Lambda}$ is the limit of $n_{h}^{\Lambda}\left(
t\right)  $ in the weak-$\ast$ topology of $L^{\infty}\left(  \R, \cD^{\prime
}\left(  T^{\ast}\mathbb{T}_{\Lambda^{\perp}};\mathcal{L}^{1}\left(
L^{2}\left(  \mathbb{T}_{\Lambda}\right)  \right)  \right)  \right)  $.

Then $\tilde{\rho}_{\Lambda}$ is an $L^{\infty}$-function in $t$ taking values
in the set of positive, $\mathcal{L}^{1}\left(  L^{2}\left(  \mathbb{T}%
_{\Lambda}\right)  \right)  $-valued measures on $T^{\ast}\mathbb{T}%
_{\Lambda^{\perp}}$. We have $\int_{T^{\ast}\mathbb{T}%
_{\Lambda^{\perp}}}\Tr\, \tilde{\rho}_{\Lambda}(t, ds, d\sigma)\leq 1$ for {\em a.e.} $t$.

Moreover, for almost every $t$ the measure $\tilde{\rho}_{\Lambda}\left(
t,\cdot\right)  $ is invariant by the geodesic flow $\phi_{\tau}|_{T^{\ast
}\mathbb{T}_{\Lambda^{\perp}}} : (s, \sigma)\mapsto(s+\tau\sigma, \sigma)$
($\tau\in\R$).
\end{proposition}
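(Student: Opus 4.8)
The plan is to run the usual extraction/compactness argument for semiclassical (Wigner) measures, but in the operator-valued symbol calculus of \cite{GerardMDM91}, applied to the family $v_h(t):=T_\Lambda U_V(t)u_h$. Since $T_\Lambda$ is an isometry and $U_V(t)$ is unitary, $v_h(t)$ is bounded in $L^2(\mathbb{T}_{\Lambda^\perp};L^2(\mathbb{T}_\Lambda))$, in fact $\|v_h(t)\|=\|u_h\|\le1$ for all $t$. First I would establish existence of $\tilde\rho_\Lambda$: by the operator-valued Calder\'on--Vaillancourt theorem, $\|K(s,hD_s)\|\le C_K$, where $C_K$ is a finite sum of sup-norms of the $\mathcal{K}(L^2(\mathbb{T}_\Lambda))$-valued derivatives $\partial^\alpha K$; hence $|\langle n_h^\Lambda(t),K\rangle|\le C_K$ uniformly in $h$ and $t$. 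Since $C_c^\infty(T^\ast\mathbb{T}_{\Lambda^\perp};\mathcal{K}(L^2(\mathbb{T}_\Lambda)))$ is separable, a diagonal extraction plus Banach--Alaoglu gives a subsequence along which $\varphi\mapsto\int\varphi(t)\langle n_h^\Lambda(t),K\rangle\,dt$ converges for all $\varphi\in L^1(\mathbb{R})$ and all $K$; for fixed $K$ the limit is an $L^1$-bounded functional of $\varphi$, hence an $L^\infty(\mathbb{R})$-function, and letting $K$ vary the representation theorem for $\mathcal{L}^1$-valued measures (\cite{GerardMDM91}) produces $\tilde\rho_\Lambda\in L^\infty(\mathbb{R};\mathcal{D}'(T^\ast\mathbb{T}_{\Lambda^\perp};\mathcal{L}^1(L^2(\mathbb{T}_\Lambda))))$ with, for a.e.\ $t$, $K\mapsto\Tr\int K\,d\tilde\rho_\Lambda(t)$ a distribution of order zero.

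Next I would address positivity and total mass. If $K(s,\sigma)\ge0$ for every $(s,\sigma)$, the operator-valued sharp G{\aa}rding inequality gives $\langle v,K(s,hD_s)v\rangle\ge-C_Kh\|v\|^2$, so $\langle n_h^\Lambda(t),K\rangle\ge-C_Kh$; integrating against $\varphi\ge0$ and letting $h\to0$ yields $\Tr\int K\,d\tilde\rho_\Lambda(t)\ge0$ for a.e.\ $t$, and running over a countable dense family of such $K$ shows $\tilde\rho_\Lambda(t,\cdot)\ge0$ for a.e.\ $t$; in particular $\Tr\,\tilde\rho_\Lambda(t,\cdot)$ is a genuine positive scalar measure on $T^\ast\mathbb{T}_{\Lambda^\perp}$. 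For the mass bound, apply the same estimate to $K=\chi(s,\sigma)P_N$, where $0\le\chi\le1$ is a scalar cutoff and $P_N$ is the spectral projector of $-\Delta_y$ onto its $N$ lowest modes: then $K(s,hD_s)=\chi(s,hD_s)\otimes P_N$, which satisfies $\langle v_h(t),K(s,hD_s)v_h(t)\rangle\le(1+O(h))\|v_h(t)\|^2\le1+O(h)$, hence $\Tr\int\chi P_N\,d\tilde\rho_\Lambda(t)\le1$ for a.e.\ $t$; letting $\chi\nearrow1$ and $N\to\infty$ and invoking monotone convergence for the positive measure $\Tr\,\tilde\rho_\Lambda(t,\cdot)$ gives $\int_{T^\ast\mathbb{T}_{\Lambda^\perp}}\Tr\,\tilde\rho_\Lambda(t,ds,d\sigma)\le1$.

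Finally, the invariance. Since $\chi_\Lambda$ is a linear isometry, $T_\Lambda$ conjugates $-\tfrac12\Delta$ into $-\tfrac12\Delta_s-\tfrac12\Delta_y$ and $V(t,\cdot)$ into $\tilde V(t):=V(t,\cdot)\circ\pi_\Lambda$, so $v_h(t)$ solves $i\partial_t v_h=(-\tfrac12\Delta_s-\tfrac12\Delta_y+\tilde V(t))v_h$. Because $\tfrac12|\sigma|^2$ is quadratic in $\sigma$ and acts as a scalar in $y$, the Moyal expansion in the $s$-variables truncates and one has the exact identity $[-\tfrac12\Delta_s,K(s,hD_s)]=\tfrac1{ih}\Op_h^{\Lambda^\perp}(\sigma\cdot\partial_s K)$, the operator-valued analogue of \eqref{e:weylcomm}. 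Differentiating $\langle n_h^\Lambda(t),K\rangle$, multiplying by $h$ and integrating against $\varphi\in C_c^1(\mathbb{R})$ with an integration by parts in $t$ gives
\[
\int_{\mathbb{R}}\varphi(t)\langle n_h^\Lambda(t),\sigma\cdot\partial_s K\rangle\,dt=-ih\int_{\mathbb{R}}\varphi(t)\langle v_h(t),[-\tfrac12\Delta_y,K(s,hD_s)]v_h(t)\rangle\,dt+o(1),
\]
because the boundary-free left side is $O(h)$ and $[\tilde V(t),K(s,hD_s)]$ is bounded by $2\|V\|_{L^\infty}C_K$. The remaining term is $O(h)$ provided $[-\tfrac12\Delta_y,K(s,hD_s)]$ is bounded uniformly in $h$; this fails for a general $K$, but holds when $K$ lies in the linear span of the symbols $b(s,\sigma)\,|e^{ik\cdot y}\rangle\langle e^{ik'\cdot y}|$ with $b\in C_c^\infty(T^\ast\mathbb{T}_{\Lambda^\perp})$ and $k,k'\in\Lambda$, since then $[-\tfrac12\Delta_y,|e^{ik\cdot y}\rangle\langle e^{ik'\cdot y}|]=\tfrac12(|k'|^2-|k|^2)|e^{ik\cdot y}\rangle\langle e^{ik'\cdot y}|$ and the commutator is again a quantization of a symbol of the same type, bounded uniformly in $h$ by Calder\'on--Vaillancourt. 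Such symbols are dense in $C_c^\infty(T^\ast\mathbb{T}_{\Lambda^\perp};\mathcal{K}(L^2(\mathbb{T}_\Lambda)))$, so first for $K$ in this class and then, by density together with Step~1, for every $K$, one gets $\Tr\int(\sigma\cdot\partial_s K)\,d\tilde\rho_\Lambda(t)=0$ for a.e.\ $t$ (with a common null set via a countable dense subfamily). Since $K\circ\phi_\tau$ stays in $C_c^\infty$ and $\sigma\cdot\partial_s(K\circ\phi_\tau)=(\sigma\cdot\partial_s K)\circ\phi_\tau$, this forces $\tau\mapsto\Tr\int(K\circ\phi_\tau)\,d\tilde\rho_\Lambda(t)$ to be constant, i.e.\ $(\phi_\tau|_{T^\ast\mathbb{T}_{\Lambda^\perp}})_\ast\tilde\rho_\Lambda(t,\cdot)=\tilde\rho_\Lambda(t,\cdot)$ for a.e.\ $t$ and all $\tau\in\mathbb{R}$.

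The main obstacle is precisely this invariance step: the operator-valued symbol $K$ does not commute with $-\tfrac12\Delta_y$, so the transport equation for $n_h^\Lambda(t)$ drags along an extra commutator term that is not visibly negligible, in contrast with the scalar situation of Lemma \ref{Lemma Inv}. The cure is to first establish the transport identity on the dense class of ``Fourier-mode'' symbols, for which that commutator is a uniformly bounded pseudodifferential operator, and then pass to the limit by density; this is where the product structure $\mathbb{T}^d\simeq\mathbb{T}_{\Lambda^\perp}\times\mathbb{T}_\Lambda$ is used essentially. The remaining points — justifying the time differentiation (only its weak form, obtained after pairing with $\varphi\in C_c^1(\mathbb{R})$, is ever needed), and the passages to countable dense families of test symbols and cutoffs $\varphi$ — are routine.
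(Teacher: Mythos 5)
Your proposal is correct and carries out, in full detail, the adaptation that the paper merely gestures at (the paper gives no proof of Proposition~\ref{p:weakstarlimit}, only a citation of the scalar analogue, Theorems~1 and~2 of \cite{MaciaAv}, and of the operator-valued calculus of \cite{GerardMDM91}). The extraction, positivity (via operator-valued sharp G\aa rding) and mass bound are entirely routine operator-valued analogues of the scalar Wigner-measure argument, and you carry them out correctly.

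The only step that requires a new idea is the invariance under $\phi_\tau\rceil_{T^*\mathbb{T}_{\Lambda^\perp}}$, and you have correctly identified the obstruction: the commutator $[-\tfrac12\Delta_y,K(s,hD_s)]$ is not bounded uniformly in $h$ for a general compact-operator-valued symbol $K$, so the scalar transport argument of Lemma~\ref{Lemma Inv} does not transplant verbatim. Your fix --- proving the transport identity first for symbols $K(s,\sigma)=\sum b_{k,k'}(s,\sigma)\lvert e^{ik\cdot y}\rangle\langle e^{ik'\cdot y}\rvert$ with $k,k'\in\Lambda$, for which $[-\tfrac12\Delta_y,K(s,hD_s)]$ is again a quantization of a symbol of the same type and hence $\cO(1)$ uniformly in $h$ by Calder\'on--Vaillancourt, and then using that $\tilde\rho_\Lambda(t,\cdot)$ is an a priori bounded positive operator-valued measure to extend the relation $\Tr\int(\sigma\cdot\partial_sK)\,d\tilde\rho_\Lambda(t)=0$ to all $K$ by density and a countable exhaustion of test symbols --- is valid and, to my mind, exactly the argument the authors must have had in mind. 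It is equivalent to truncating $K$ by finite-rank spectral projectors of $-\Delta_y$ and passing to the limit using compactness, which is the standard device in \cite{GerardMDM91}. The remaining ingredients you invoke (the exact Weyl commutator identity for the quadratic symbol $|\sigma|^2/2$, the uniform bound on $[\tilde V,K(s,hD_s)]$ by $2\|V\|_{L^\infty}C_K$, and the identity $\sigma\cdot\partial_s(K\circ\phi_\tau)=(\sigma\cdot\partial_sK)\circ\phi_\tau$ to integrate the infinitesimal relation) are all used correctly. One trivial slip: the eigenvalue of the commutator with $-\tfrac12\Delta_y$ on $\lvert e^{ik\cdot y}\rangle\langle e^{ik'\cdot y}\rvert$ is $\tfrac12(|k|^2-|k'|^2)$, not $\tfrac12(|k'|^2-|k|^2)$; this has no bearing on the argument.
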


This result is the analogue of Theorems 1 and 2 of \cite{MaciaAv} in the
context of operator-valued measures. Its proof follows the lines of those
results, after the adaptation of the symbolic calculus to operator valued
symbols as developed for instance in \cite{GerardMDM91}.

When taking the limits $h\To 0$ and $R\To +\infty$ one should have in mind the
following facts. For any $a\in\mathcal{S}_{\Lambda}^{1}$, we have for fixed
$R$
\[
\Op_{1}^{\Lambda}\left(  a_{R,\Lambda}^{h}\left(  \sigma,\cdot\right)
\right)  =\Op_{1}^{\Lambda}\left(  a_{R,\Lambda}^{0}\left(  \sigma
,\cdot\right)  \right)  +\cO(h)
\]
where the remainder $\cO(h)$ is estimated in the operator norm (using the
Calder\'{o}n-Vaillancourt theorem). In addition, the following limit takes
place in the strong topology of $C_{c}^{\infty}\left(  T^{\ast}\mathbb{T}%
_{\Lambda^{\perp}};\mathcal{L}\left(  L^{2}\left(  \mathbb{T}_{\Lambda
}\right)  \right)  \right)  $\/:%
\[
\lim_{R\rightarrow\infty} \Op_{1}^{\Lambda}\left(  a_{R,\Lambda}^{0}\left(
\sigma,\cdot\right)  \right)  =\Op_{1}^{\Lambda}\left(  a_{\Lambda}^{0}\left(
\sigma,\cdot\right)  \right)  ,
\]
where $a_{\Lambda}^{0}$ is defined by setting $h=0$ and $R=\infty$ in the
definition of $a_{R,\Lambda}^{h}$. In other words, $a_{\Lambda}^{0}(\sigma, y,
\eta)=a(\tilde\pi_{\Lambda}(\sigma, y, 0), \eta)=a(y, \sigma, \eta)$.

Combining what we have done so far, we find

\begin{corollary}
Let $\tilde{\rho}_{\Lambda}\in L^{\infty}\left(  \mathbb{R};\mathcal{M}%
_{+}\left(  T^{\ast}\mathbb{T}_{\Lambda^{\perp}};\mathcal{L}^{1}\left(
L^{2}\left(  \mathbb{T}_{\Lambda}\right)  \right)  \right)  \right)  $ be a
weak-$\ast$ limit of $\left(  n_{h}^{\Lambda}\right)  $. Let $\tilde{\mu
}_{\Lambda}$ be defined by \eqref{subL} and \eqref{doublelim}. Then, for every
$a\in\mathcal{S}_{\Lambda}^{1}$ and a.e. $t\in\mathbb{R}$ we have:%
\begin{align*}
&  \int_{T^*\mathbb{T}^{d} \times\left\langle \Lambda
\right\rangle }a\left(  x,\xi,\eta\right)  \tilde{\mu}_{\Lambda}\left(
t,dx,d\xi,d\eta\right) \\
&  = \Tr\left(  \int_{T^{\ast}\mathbb{T}_{\Lambda^{\perp}}} \Op_{1}^{\Lambda
}\left(  a_{\Lambda}^{0}\left(  \sigma,\cdot\right)  \right)  \tilde{\rho
}_{\Lambda}\left(  t,ds,d\sigma\right)  \right)  .
\end{align*}

\end{corollary}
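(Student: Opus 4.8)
The plan is to unwind the definitions \eqref{subL}--\eqref{doublelim}, recognise each $\langle w_{\Lambda,h,R}(t),a\rangle$ as a pairing of $n_h^\Lambda(t)$ with a fixed compact-operator-valued symbol, and then pass to the limit in $h$ (via Proposition~\ref{p:weakstarlimit}) and afterwards in $R$. Throughout one works along a single subsequence realizing simultaneously the weak-$\ast$ limits of $w_h$, of $n_h^\Lambda$ (to $\tilde\rho_\Lambda$), and hence, through \eqref{doublelim}, of $\tilde\mu_\Lambda$.

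Fix $a\in\mathcal{S}_\Lambda^1$, $R>0$ and $\varphi\in L^1(\mathbb{R})$. The function $a_{R,\Lambda}^0(\sigma,y,\eta)=\chi(\eta/R)\,a(y,\sigma,\eta)$ appearing in Remark~\ref{rmkCompact} is smooth, constant in $s$, compactly supported in $\sigma$ (because $a$ is compactly supported in $\xi$) and in $\eta$; hence $K_R(\sigma):=\Op_1^\Lambda(a_{R,\Lambda}^0(\sigma,\cdot))$ is a smoothing, a fortiori compact, operator on $L^2(\mathbb{T}_\Lambda)$ depending smoothly on $\sigma$ with compact support, so that $K_R\in C_c^\infty(T^\ast\mathbb{T}_{\Lambda^\perp};\mathcal{K}(L^2(\mathbb{T}_\Lambda)))$. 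By Remark~\ref{rmkCompact}, $\langle w_{\Lambda,h,R}(t),a\rangle$ equals the pairing of $T_\Lambda U_V(t)u_h$ with $\Op_1^\Lambda(a_{R,\Lambda}^h(hD_s,\cdot))$; using the expansion $\Op_1^\Lambda(a_{R,\Lambda}^h(\sigma,\cdot))=\Op_1^\Lambda(a_{R,\Lambda}^0(\sigma,\cdot))+\mathcal{O}_R(h)$ in operator norm recalled before the statement, the Calderón--Vaillancourt theorem for operator-valued symbols, and the unitarity of $U_V(t)$ together with the boundedness of $(u_h)$, one gets
\[
\langle w_{\Lambda,h,R}(t),a\rangle=\langle n_h^\Lambda(t),K_R\rangle+\mathcal{O}_R(h),
\]
uniformly in $t$. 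Integrating against $\varphi$ kills the remainder as $h\to0^+$, and Proposition~\ref{p:weakstarlimit} gives
\[
\lim_{h\to0^+}\int_{\mathbb{R}}\varphi(t)\,\langle w_{\Lambda,h,R}(t),a\rangle\,dt=\int_{\mathbb{R}}\varphi(t)\,\Tr\!\left(\int_{T^\ast\mathbb{T}_{\Lambda^\perp}}\Op_1^\Lambda(a_{R,\Lambda}^0(\sigma,\cdot))\,\tilde\rho_\Lambda(t,ds,d\sigma)\right)dt.
\]

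It remains to let $R\to\infty$. By \eqref{doublelim} the left-hand side tends to $\int_{\mathbb{R}}\varphi(t)\langle\tilde\mu_\Lambda(t,\cdot),a\rangle\,dt$. On the right-hand side, $\Op_1^\Lambda(a_{R,\Lambda}^0(\sigma,\cdot))$ is bounded in $\mathcal{L}(L^2(\mathbb{T}_\Lambda))$ uniformly in $R$ and $\sigma$ (Calderón--Vaillancourt) and converges strongly, as $R\to\infty$, to $\Op_1^\Lambda(a_\Lambda^0(\sigma,\cdot))$, as recalled before the statement. Disintegrating the positive $\mathcal{L}^1$-valued measure $\tilde\rho_\Lambda(t,\cdot)$ against its total trace measure (finite, of mass $\le1$) and expanding the resulting operator density spectrally, strong convergence and the uniform bound yield, by dominated convergence,
\[
\lim_{R\to\infty}\Tr\!\left(\int_{T^\ast\mathbb{T}_{\Lambda^\perp}}\Op_1^\Lambda(a_{R,\Lambda}^0(\sigma,\cdot))\,\tilde\rho_\Lambda(t,ds,d\sigma)\right)=\Tr\!\left(\int_{T^\ast\mathbb{T}_{\Lambda^\perp}}\Op_1^\Lambda(a_\Lambda^0(\sigma,\cdot))\,\tilde\rho_\Lambda(t,ds,d\sigma)\right)
\]
for a.e. $t$, with a bound uniform in $t$ that permits a further dominated convergence under $\int\varphi(t)\,dt$. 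Equality of the two sides for every $\varphi\in L^1(\mathbb{R})$ gives, for the fixed $a$, the announced identity for a.e. $t$.

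I expect the only genuinely delicate step to be this last passage $R\to\infty$: it cannot be carried out in operator norm, since $a_{R,\Lambda}^0$ does not converge to $a_\Lambda^0$ in any symbol class (the cutoff $\chi(\cdot/R)$ does not tend to $1$ in $S^0$) and $\Op_1^\Lambda(a_\Lambda^0(\sigma,\cdot))$ is not even compact; it is precisely the trace-class nature of $\tilde\rho_\Lambda$, combined with the uniform $\mathcal{L}(L^2)$-bound, that legitimises interchanging the $R$-limit with the trace integral. Everything else — the reduction to $n_h^\Lambda$ and the $h\to0$ limit — is a routine unwinding of Remark~\ref{rmkCompact} and Proposition~\ref{p:weakstarlimit}.
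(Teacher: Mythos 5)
Your proposal is correct and follows the same route the paper itself sketches: Remark~\ref{rmkCompact} to rewrite $\langle w_{\Lambda,h,R}(t),a\rangle$ as a pairing of $n_h^\Lambda(t)$ with $\Op_1^\Lambda(a_{R,\Lambda}^h(hD_s,\cdot))$, the $\cO_R(h)$ replacement $a_{R,\Lambda}^h\rightsquigarrow a_{R,\Lambda}^0$ with Calder\'on--Vaillancourt, Proposition~\ref{p:weakstarlimit} for the $h\to0$ limit, and then $R\to\infty$ using the strong convergence $\Op_1^\Lambda(a_{R,\Lambda}^0(\sigma,\cdot))\to\Op_1^\Lambda(a_\Lambda^0(\sigma,\cdot))$ together with the trace-class nature of $\tilde\rho_\Lambda$. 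The paper compresses all this into ``Combining what we have done so far, we find,'' so your unwinding of the argument, and in particular your correct identification of why the last limit cannot be taken in operator norm and needs the $\mathcal{L}^1$-valued structure plus a uniform $\mathcal{L}(L^2)$ bound, matches the intended proof.
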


\begin{remark}
If $a\in\mathcal{S}_{\Lambda}^{1}$ does not depend on $\eta\in\mathbb{R}^{d}$
then the above identity can be rewritten as:%
\begin{equation}
\int_{T^*\mathbb{T}^{d} \times\left\langle \Lambda
\right\rangle }a\left(  x,\xi\right)  \tilde{\mu}_{\Lambda}\left(
t,dx,d\xi,d\eta\right)  =\Tr_{L^{2}\left(  \mathbb{T}_{\Lambda}\right)}\left(  \int_{T^{\ast}\mathbb{T}_{\Lambda^{\perp}%
}}m_{a\circ\pi_{\Lambda}}\left(  \sigma\right)  \tilde{\rho}_{\Lambda}\left(
t,ds,d\sigma\right)  \right)  , \label{e:muLtilda}%
\end{equation}
where for $\sigma\in\Lambda^{\perp}$, $m_{a}\left(  \sigma\right)  $ denotes
the operator of multiplication by $a\left( ., \sigma \right)  $ in $L^{2}\left(  \mathbb{T}_{\Lambda}\right)  $.

Since all the arguments above actually hold with $L^2 (\T_\Lambda)$ replaced by the smaller space
$T_\Lambda(L^2_\Lambda (\T^d))$, and since
$m_{a\circ\pi_{\Lambda}}(\sigma)%
=T_{\Lambda}m_{a}(\sigma)T_{\Lambda}^{\ast}$ on this space (where $m_{a}(\sigma)$ is again the multiplication operator by $a(., \sigma)$), we can
write the above identity as:%
\begin{equation}\label{e:intrinsic}
\int_{T^*\mathbb{T}^{d} \times\left\langle \Lambda
\right\rangle }a\left(  x, \xi\right)  \tilde{\mu}_{\Lambda}\left(  t,dx,d\xi
,d\eta\right)  =\Tr_{L^2_\Lambda (\T^d)}\left(  \int_{T^{\ast}\mathbb{T}_{\Lambda^{\perp}}}%
m_{a}(\sigma)T_{\Lambda}^{\ast}\tilde{\rho}_{\Lambda}\left(  t,ds,d\sigma\right)
T_{\Lambda}\right)  .
\end{equation}

And when $a=a(x)$ does not depend on $\xi$, this reduces to
\begin{equation} 
\int_{T^*\mathbb{T}^{d} \times\left\langle \Lambda
\right\rangle }a\left(  x\right)  \tilde{\mu}_{\Lambda}\left(  t,dx,d\xi
,d\eta\right)  =\Tr_{L^2_\Lambda (\T^d)}\left(  \int_{T^{\ast}\mathbb{T}_{\Lambda^{\perp}}}%
m_{a}T_{\Lambda}^{\ast}\tilde{\rho}_{\Lambda}\left(  t,ds,d\sigma\right)
T_{\Lambda}\right) 
\end{equation}
which proves the absolute continuity of the projection of $\tilde\mu_\Lambda$ to $\T^d$.
\end{remark}

\subsection{Computation and structure of $\tilde{\mu}^{\Lambda}$}

The positivity of $\tilde{\mu}^{\Lambda}\left(  t,\cdot\right)  $ can be
deduced following the lines of \cite{FermanianGerardCroisements} \S 2.1, or
those of the proof of Theorem 1 in \cite{GerardMDM91}; the idea is recalled in
Corollary \ref{CoroCV} in the appendix. Given $a\in\mathcal{S}_{\Lambda}^{1}$
there exists $R_{0}>0$ and $a_{\text{hom}}\in C_{c}^{\infty}\left(  T^{\ast
}\mathbb{T}^{d}\times\mathbb{S}_{\la\Lambda\ra}\right)  $ such that
\[
a\left(  x,\xi,\eta\right)  =a_{\text{hom}}\left(  x,\xi,\frac{\eta
}{\left\vert \eta\right\vert }\right)  ,\quad\text{for }\left\vert
\eta\right\vert \geq R_{0}.
\]
Clearly, for $R$ large enough, the value $\left\langle w_{h,R}^{\Lambda
}\left(  t\right)  ,a\right\rangle $ only depends on $a_{\text{hom}}$.
Therefore, the limiting distribution $\tilde{\mu}^{\Lambda}\left(
t,\cdot\right)  $ can be viewed as an element of the dual of $C_{c}^{\infty
}\left(  T^{\ast}\mathbb{T}^{d}\times\mathbb{S}_{\la\Lambda\ra}\right)  $. Let
us now check the invariance property (\ref{2minv}). Set
\[
a^{R}\left(  x,\xi,\eta\right)  :=\left(  1-\chi\left(  \frac{\eta}{R}\right)
\right)  a\left(  x,\xi,\eta\right)  .
\]
Notice that since $a$ has only Fourier modes in $\Lambda$:%
\[
\frac{\xi}{h}\cdot\partial_{x}a^{R}\left(  x,\xi,\frac{P_{\Lambda}\xi}%
{h}\right)  =\frac{P_{\Lambda}\xi}{h}\cdot\partial_{x}a^{R}\left(  x,\xi
,\frac{P_{\Lambda}\xi}{h}\right)  .
\]
Therefore, by equations (\ref{e:propW}) and (\ref{e:LV}), and taking into
account that $a^{R}$ vanishes near $\eta=0$, we have, for every $\varphi\in
C_{c}^{1}\left(  \mathbb{R}\right)  $:%
\begin{align}
\int_{\mathbb{R}}\varphi\left(  t\right)  \left\langle w_{h,R}^{\Lambda
}\left(  t\right)  ,\frac{\eta}{\left\vert \eta\right\vert }\cdot\partial
_{x}a^{R}\right\rangle dt  &  =-\int_{\mathbb{R}}\varphi^{\prime}\left(
t\right)  \left\langle w_{h,R}^{\Lambda}\left(  t\right)  ,\frac{1}{\left\vert
\eta\right\vert }a^{R}\right\rangle dt\label{e:wLinf}\\
&  +\int_{\mathbb{R}}\varphi\left(  t\right)  \left\langle \mathcal{L}_{V}%
^{h}\left(  t\right)  ,\frac{1}{\left\vert \eta\right\vert }a^{R}\right\rangle
dt. \label{e:wLinf2}%
\end{align}
Writing $\eta=r\omega$ with $r>0$ and $\omega\in\mathbb{S}_{\la\Lambda\ra}$ we
find, for $R$ large enough:%
\[
b^{R}\left(  x,\xi,\eta\right)  :=\frac{1}{\left\vert \eta\right\vert }%
a^{R}\left(  x,\xi,\eta\right)  =\frac{1}{r}\left(  1-\chi\left(  \frac{r}%
{R}\omega\right)  \right)  a_{\text{hom}}\left(  x,\xi,\omega\right)  ;
\]
moreover, since $b^{R}$ is homogeneous of degree $-1$ in the variable $\eta$,
the Calder\'{o}n-Vaillancourt theorem implies that the operator:%
\[
B_{h,R}^{\Lambda}:=\operatorname*{Op}\nolimits_{h}\left(  b^{R}\left(
x,\xi,\frac{P_{\Lambda}\xi}{h}\right)  \right)
\]
satisfies:%
\[
\limsup_{h\rightarrow0^{+}}\left\Vert B_{h,R}^{\Lambda}\right\Vert
_{\mathcal{L}\left(  L^{2}\left(  \mathbb{T}^{d}\right)  \right)  }\leq
\frac{C}{R}.
\]
Therefore,
\[
\lim_{R\rightarrow\infty}\lim_{h\rightarrow0^{+}}\int_{\mathbb{R}}%
\varphi^{\prime}\left(  t\right)  \left\langle w_{h,R}^{\Lambda}\left(
t\right)  ,\frac{1}{\left\vert \eta\right\vert }a^{R}\right\rangle dt=0,
\]
and%
\[
\limsup_{h\rightarrow0^{+}}\left\langle \mathcal{L}_{V}^{h}\left(  t\right)
,\frac{1}{\left\vert \eta\right\vert }a^{R}\right\rangle \leq C\limsup
_{h\rightarrow0^{+}}\left\Vert \left[  V,B_{h,R}^{\Lambda}\right]  \right\Vert
_{\mathcal{L}\left(  L^{2}\left(  \mathbb{T}^{d}\right)  \right)  }\leq
\frac{C^{\prime}}{R}\left\Vert V\right\Vert _{L^{\infty}\left(  \mathbb{T}%
^{d}\right)  }.
\]
After letting $h\rightarrow0^{+}$ and $R\rightarrow\infty$ in (\ref{e:wLinf}),
(\ref{e:wLinf2}), we conclude that for almost every $t\in\mathbb{R}$:%
\[
\omega\cdot\nabla_{x}\tilde{\mu}^{\Lambda}\left(  t,x,\xi,\omega\right)  =0.
\]
This is equivalent to (\ref{2minv}).

\section{Successive second microlocalizations corresponding to a sequence of
lattices\label{s:successive}}

Let us summarize what we have done in the previous section. The
semiclassical measure $\mu(t,.)$ has been decomposed as a sum
\[
\mu(t,.)=\sum_{\Lambda}\mu_{\Lambda}(t,.)+\sum_{\Lambda}\mu^{\Lambda}(t,.),
\]
where $\Lambda$ runs over the set of primitive submodules of
$\IZ^{d}$, and where
\[
\mu_{\Lambda}(t,.)=\int_{\la \Lambda\ra}\tilde{\mu}_{\Lambda}(t,.,d\eta
)\rceil_{\IT^{d}\times R_{\Lambda}},\qquad\mu^{\Lambda}(t,.)=\int_{\la \Lambda\ra%
}\tilde{\mu}^{\Lambda}(t,.,d\eta)\rceil_{\IT^{d}\times
R_{\Lambda}}.
\]
The ``distributions'' $\tilde{\mu}_{\Lambda}$ and
$\tilde{\mu}^{\Lambda}$ have the following properties~:\smallskip

\begin{itemize}
\item $\tilde{\mu}_{\Lambda}(t,dx,d\xi,d\eta)$ is in
$L^{\infty}\left(  \mathbb{R};\left(  \mathcal{S}_{\Lambda}%
^{1}\right)  ^{\prime}\right)  $;\smallskip

\item $\int_{\la \Lambda\ra}\tilde{\mu}_{\Lambda}(t,.,d\eta)$ is in
$L^{\infty }(\IR,\cM_{+}(T^{\ast}\IT^{d}))$;\smallskip

\item
 for $a\in \cS_\Lambda^1$, we have
$$ \int_{T^*\mathbb{T}^{d} \times\left\langle \Lambda
\right\rangle }a\left(  x,\xi,\eta\right)  \tilde{\mu}_{\Lambda}\left(
t,dx,d\xi,d\eta\right) \\
  = \Tr\left(  \int_{T^{\ast}\mathbb{T}_{\Lambda^{\perp}}} \Op_{1}^{\Lambda
}\left(  a\left( \cdot , \sigma,\cdot\right)  \right)  \tilde{\rho
}_{\Lambda}\left(  t,ds,d\sigma\right)  \right) $$
where $\tilde{\rho
}_{\Lambda}\left(  t\right)$ is a positive measure on $T^{\ast}\mathbb{T}_{\Lambda^{\perp}}$,
taking values in $\cL^1(T_\Lambda(L^2_\Lambda(\T^d)))$, invariant under the geodesic flow $(s, \sigma)\mapsto (s+\tau\sigma, \sigma)$ ($\tau\in\R$).
\end{itemize}

In addition,\smallskip

\begin{itemize}
\item for $a\in\cS_{\Lambda}^{1}$,
$\la\tilde{\mu}^{\Lambda}(t,dx,d\xi ,d\eta),a(x,\xi,\eta)\ra$ is
obtained as the limit of
\[
\left\langle w_{h,R}^{\Lambda}\left(  t\right)  ,a\right\rangle
:=\int _{T^{\ast}\mathbb{T}^{d}}\left(  1-\chi\left(
\frac{P_{\Lambda}\left( \xi\right)  }{Rh}\right)  \right)  a\left(
x,\xi,\frac{P_{\Lambda}\left( \xi\right)  }{h}\right)  w_{h}\left(
t\right)  \left(  dx,d\xi\right)  ,
\]
where the weak-$\ast$ limit holds in
$L^{\infty}(\IR,\cS_{\Lambda}^{1^{\prime}})$, as $h\To0$ then
$R\To+\infty$ (along subsequences);\smallskip

\item $\tilde{\mu}^{\Lambda}(t,dx,d\xi,d\eta)$ is in $L^{\infty}%
(\IR,\cM_{+}(T^{\ast}\mathbb{T}^{d}\times
\mathbb{S}_{\la\Lambda\ra} ))$;\smallskip

\item $\tilde{\mu}^{\Lambda}$ is invariant by the two flows, $\phi_{\tau}%
^{0}:(x,\xi,\eta)\mapsto(x+\tau\xi,\xi,\eta)$, and
$\phi_{\tau}^{1}:(x,\xi ,\eta)\mapsto(x+\tau\frac{\eta}{|\eta|},\xi,\eta)$ ($\tau\in\R$).
\end{itemize}

This can be considered as the first step of an induction
procedure, the $k$-th step of which will read as follows~:
\medskip

\noindent\textbf{Step $k$ of the induction~:} At step $k$, we have
decomposed $\mu(t,.)$ as a sum
\[
\mu(t,.)=\sum_{1\leq l\leq k}\:\sum_{\Lambda_{1}\supset\Lambda_{2}%
\supset\ldots\supset\Lambda_{l}}\mu_{\Lambda_{l}}^{\Lambda_{1}\Lambda
_{2}\ldots\Lambda_{l-1}}(t,.)+\sum_{\Lambda_{1}\supset\Lambda_{2}\supset
\ldots\supset\Lambda_{k}}\mu^{\Lambda_{1}\Lambda_{2}\ldots\Lambda_{k}}(t,.),
\]
where the sums run over the \emph{strictly decreasing} sequences
of primitive submodules of $\IZ^{d}$ (of lengths $l\leq k$ in the
first term, of length $k$ in the second term). These measures themselves are obtained as
\[
\mu_{\Lambda_{l}}^{\Lambda_{1}\Lambda_{2}\ldots\Lambda_{l-1}}(t,.)=\int
_{R_{\Lambda_{2}}(\Lambda_{1})\times\ldots\times
R_{\Lambda_{l}}(\Lambda
_{l-1})\times\la\Lambda_l\ra}\tilde{\mu}_{\Lambda_{l}}^{\Lambda_{1}\Lambda_{2}%
\ldots\Lambda_{l-1}}(t,.,d\eta_{1},\ldots,d\eta_{l})\rceil_{\IT^{d}\times
R_{\Lambda_{1}}},
\]%
\[
\mu^{\Lambda_{1}\Lambda_{2}\ldots\Lambda_{k}}(t,.)=\int_{R_{\Lambda_{2}%
}(\Lambda_{1})\times\ldots\times R_{\Lambda_{k}}(\Lambda_{k-1})\times \la\Lambda_k\ra%
}\tilde{\mu}^{\Lambda_{1}\Lambda_{2}\ldots\Lambda_{k}}(t,.,d\eta_{1}%
,\ldots,d\eta_{k})\rceil_{\IT^{d}\times R_{\Lambda_{1}}},
\]
where we denoted
$R_{\Lambda}(\Lambda^{\prime}):=\Lambda^{\bot}\cap
\la\Lambda^{\prime}\ra\cap\Omega_{\operatorname*{rk}\Lambda^{\prime
}-\operatorname*{rk}\Lambda}$, for
$\Lambda\subset\Lambda^{\prime}$.

Let us denote by $\cS^{k}_{\Lambda_{1},\ldots, \Lambda_{k}}$ the class of
smooth
functions $a(x, \xi, \eta_{1},\ldots, \eta_{k})$ on $T^{*}\IT^{d}%
\times\la\Lambda_{1}\ra\times\ldots\times \la\Lambda_{k}\ra$ that are (i) smooth and compactly supported
in $(x, \xi)\in T^{*}\IT^{d}$; (ii) homogeneous of degree $0$ at
infinity in each variable $\eta_{1}, \ldots, \eta_{k}$; (iii) such
that their non-vanishing $x$-Fourier coefficients correspond to
frequencies in $\Lambda_{k}$.

The ``distributions''
$\tilde{\mu}_{\Lambda_{l}}^{\Lambda_{1}\Lambda_{2}\ldots
\Lambda_{l-1}}$ and
$\tilde{\mu}^{\Lambda_{1}\Lambda_{2}\ldots\Lambda_{k}}$ have the
following properties~:

\begin{itemize}
\item
$\tilde{\mu}_{\Lambda_{l}}^{\Lambda_{1}\Lambda_{2}\ldots\Lambda_{l-1}}$
is in $L^{\infty}\left(  \IR,(\cS^{l }_{\Lambda_{1},\ldots, \Lambda_{l}})^\prime\right)    $. With respect to the variables $\eta_j\in\la\Lambda_{j}\ra$, $j=1,\ldots, l-1$, it is $0$-homogeneous and supported at infinity. Thus, (as in footnote \ref{f:homogen}) we may identify it with a distribution on the unit sphere $\mathbb{S}_{\la{\Lambda_{1}}\ra}\times\ldots\times \mathbb{S}_{\la\Lambda_{l-1}\ra}$ ;\smallskip

\item $\int_{\la\Lambda_l\ra}\tilde{\mu}_{\Lambda_{l}}^{\Lambda_{1}\Lambda_{2}%
\ldots\Lambda_{l-1}}(t,.,d\eta_{l})$ is in
$L^{\infty}(\IR,\cM_{+}(T^{\ast
}\IT^{d}\times\mathbb{S}_{\la{\Lambda_{1}}\ra}\times\ldots\times \mathbb{S}_{\la\Lambda_{l-1}\ra}))$;\smallskip

\item  for $a\in \cS^{l }_{\Lambda_{1},\ldots, \Lambda_{l}}$, we have
\begin{align}
&  \int_{T^*\mathbb{T}^{d}\times  {\la\Lambda_{1}%
\ra} \times\ldots\times {\la\Lambda_{l-1}\ra} }a\left(  x,\xi, \eta_1,\ldots, \eta_l\right) \tilde{\mu}_{\Lambda_{l}}^{\Lambda_{1}\Lambda_{2}\ldots\Lambda_{l-1}}\left(
t,dx,d\xi,d\eta_1,\ldots, d\eta_l\right) \\
& \label{e:fullintrinsic} = \Tr\left(  \int_{T^{\ast}\mathbb{T}_{\Lambda_l^{\perp}}\times  \mathbb{S}_{\la\Lambda_{1}%
\ra} \times\ldots\times \mathbb{S}_{\la\Lambda_{l-1}\ra}} \Op_{1}^{\Lambda_l
}\left(  a\left( \cdot , \sigma,  \infty\eta_1,\ldots, \infty \eta_{l-1}, \cdot\right)  \right)  \tilde\rho_{\Lambda_{l}}^{\Lambda_{1}\Lambda_{2}\ldots\Lambda_{l-1}}\left(  t,ds,d\sigma, d\eta_1, \ldots, d\eta_{l-1}\right)  \right) \end{align}
where
$\tilde\rho_{\Lambda_{l}}^{\Lambda_{1}\Lambda_{2}\ldots\Lambda_{l-1}}(t)$
is a positive measure on $T^*\T_{\Lambda_{l}^{\bot}}\times  \mathbb{S}_{\la\Lambda_{1}%
\ra} \times\ldots\times \mathbb{S}_{\la\Lambda_{l-1}\ra} $, taking values in
$\cL^{1}(T_{\Lambda_l}L^{2}_{\Lambda_l}(\IT^d))$. It is invariant under the flows
$(s, \sigma, \eta_1,\ldots, \eta_{l-1})\mapsto (s+\tau\sigma, \sigma, \eta_1,\ldots, \eta_{l-1})$ and 
$(s, \sigma, \eta_1,\ldots, \eta_{l-1})\mapsto (s+\tau\frac{\eta_j}{|\eta_j|}, \sigma, \eta_1,\ldots, \eta_{l-1})$ ($\tau\in\R$, $j=1,\ldots, l-1$).
Equation \eqref{e:fullintrinsic} implies that the projection of $ \tilde{\mu}_{\Lambda_{l}}^{\Lambda_{1}\Lambda_{2}\ldots\Lambda_{l-1}}$ on $\T^d$ is absolutely continuous.
\end{itemize}

\begin{itemize}
\item For $a\in\cS^{k}_{\Lambda_{1},\ldots, \Lambda_{k}}$,
$\la\tilde{\mu}^{\Lambda_{1}\Lambda
_{2}\ldots\Lambda_{k}}(t,dx,d\xi,d\eta_{1},\ldots,d\eta_{k}),a(x,\xi,\eta
_{1},\ldots,\eta_{k})\ra$ is obtained as the limit of
\[
\left\langle w_{h}(t,dx,d\xi),a\left(  x,\xi,\frac{P_{\Lambda_{1}}\xi}%
{h},\cdots,\frac{P_{\Lambda_{k}}\xi}{h}\right)  \left(
1-\chi\left( \frac{P_{\Lambda_{1}}\xi}{R_{1}h}\right)  \right)
\ldots\left(  1-\chi\left(
\frac{P_{\Lambda_{k}}\xi}{R_{k}h}\right)  \right)  \right\rangle .
\]
The weak limit holds in
$L^{\infty}(\IR,(\cS^{k}_{\Lambda_{1},\ldots, \Lambda_{k}})^{\prime})$, as $h\To0$
then $R_{1}\To+\infty$,..., $R_{k}\To+\infty$ (along
subsequences);\smallskip

\item $\tilde{\mu}^{\Lambda_{1}\Lambda_{2}\ldots\Lambda_{k}}$ is
in $L^{\infty}(\IR,\cM_{+}(T^{\ast}\IT^{d}\times \mathbb{S}_{\la\Lambda_1\ra}\times\ldots \mathbb{S}_{\la\Lambda_k\ra} )) $;\smallskip

\item $\tilde{\mu}^{\Lambda_{1}\Lambda_{2}\ldots\Lambda_{k}}$ is
invariant by
the $k+1$ flows, $\phi_{\tau}^{0}:(x,\xi,\eta)\mapsto(x+\tau\xi,\xi,\eta_{1}%
,\ldots,\eta_{k})$, and $\phi_{\tau}^{j}:(x,\xi,\eta_{1},\ldots,\eta_{k}%
)\mapsto(x+\tau\frac{\eta_{j}}{|\eta_j|},\xi,\eta_{1},\ldots,\eta_{k})$ (where
$j=1,\ldots,k$, $\tau\in\R$).
\end{itemize}

\medskip

\noindent\textbf{How to go from step $k$ to step
$k+1$.} 

The term $\sum_{1\leq l\leq k}%
\sum_{\Lambda_{1}\supset\Lambda_{2}\supset\ldots\supset\Lambda_{l}}%
\mu_{\Lambda_{l}}^{\Lambda_{1}\Lambda_{2}\ldots\Lambda_{l-1}}$
remains untouched after step $k$. 

To decompose further the term
$\sum_{\Lambda
_{1}\supset\Lambda_{2}\supset\ldots\supset\Lambda_{k}}\mu^{\Lambda_{1}%
\Lambda_{2}\ldots\Lambda_{k}}$, we proceed as follows. Using the
positivity of
$\tilde{\mu}^{\Lambda_{1}\Lambda_{2}\ldots\Lambda_{k}}$, we use
the procedure described in Section \ref{s:decompo} to write
\[
\tilde{\mu}^{\Lambda_{1}\Lambda_{2}\ldots\Lambda_{k}}=\sum_{\Lambda
_{k+1}\subset\Lambda_{k}}\tilde{\mu}^{\Lambda_{1}\Lambda_{2}\ldots\Lambda_{k}%
}\rceil_{\eta_{k}\in R_{\Lambda_{k+1}}(\Lambda_{k})},
\]
where the sum runs over all primitive submodules $\Lambda_{k+1}$
of $\Lambda_{k}$. Moreover, by the proof of Lemma
\ref{LemmaFourierInvariant}, all the $x$-Fourier modes of
$\tilde{\mu}^{\Lambda_{1}\Lambda_{2}\ldots
\Lambda_{k}}\rceil_{\eta_{k}\in R_{\Lambda_{k+1}}(\Lambda_{k})}$
are in $\Lambda_{k+1}$. To generalize the analysis of Section
\ref{s:second}, we consider test functions
$a\in\cS_{\Lambda_1,\ldots\Lambda_{k+1}}^{k+1}$. For such a function $a$, we let
\begin{multline*}
\left\langle
w_{h,R_{1},\ldots,R_{k}}^{\Lambda_{1}\Lambda_{2}\ldots
\Lambda_{k+1}}\left(  t\right)  ,a\right\rangle \\
:=\int_{T^{\ast}\mathbb{T}^{d}}\left(  1-\chi\left(  \frac{P_{\Lambda_{1}%
}\left(  \xi\right)  }{R_{1}h}\right)  \right)  \ldots\left(
1-\chi\left( \frac{P_{\Lambda_{k}}\left(  \xi\right)
}{R_{k}h}\right)  \right)  \left( 1-\chi\left(
\frac{P_{\Lambda_{k+1}}\left(  \xi\right)  }{R_{k+1}h}\right)
\right) \\
a\left(  x,\xi,\frac{P_{\Lambda_{1}}\left(  \xi\right)
}{h},\cdots ,\frac{P_{\Lambda_{k+1}}\left(  \xi\right)
}{h}\right)  w_{h}\left( t\right)  \left(  dx,d\xi\right)  ,
\end{multline*}
and%
\begin{multline*}
\left\langle
w_{\Lambda_{k+1},h,R_{1},\ldots,R_{k}}^{\Lambda_{1}\Lambda
_{2}\ldots\Lambda_{k}}\left(  t\right)  ,a\right\rangle \\
:=\int_{T^{\ast}\mathbb{T}^{d}}\left(  1-\chi\left(  \frac{P_{\Lambda_{1}%
}\left(  \xi\right)  }{R_{1}h}\right)  \right)  \ldots\left(
1-\chi\left( \frac{P_{\Lambda_{k}}\left(  \xi\right)
}{R_{k}h}\right)  \right)
\chi\left(  \frac{P_{\Lambda_{k+1}}\left(  \xi\right)  }{R_{k+1}h}\right) \\
a\left(  x,\xi,\frac{P_{\Lambda_{1}}\left(  \xi\right)
}{h},\cdots ,\frac{P_{\Lambda_{k+1}}\left(  \xi\right)
}{h}\right)  w_{h}\left( t\right)  \left(  dx,d\xi\right)  .
\end{multline*}
By the Calder\'{o}n-Vaillancourt theorem, both $w_{\Lambda_{k+1}%
,h,R_{1},\ldots,R_{k}}^{\Lambda_{1}\Lambda_{2}\ldots\Lambda_{k}}$
and
$w_{h,R_{1},\ldots,R_{k}}^{\Lambda_{1}\Lambda_{2}\ldots\Lambda_{k+1}}$
are bounded in
$L^{\infty}(\IR,(\cS_{\Lambda_1,\ldots,\Lambda_{k+1}}^{k+1})^{\prime}).$ After
extracting subsequences, we can take the following
limits~:
\[
\lim_{R_{k+1}\To+\infty}\cdots\lim_{R_{1}\To+\infty}\lim_{h\To0}\left\langle
w_{h,R_{1},\ldots,R_{k}}^{\Lambda_{1}\Lambda_{2}\ldots\Lambda_{k+1}}\left(
t\right)  ,a\right\rangle =:\left\langle
\tilde{\mu}^{\Lambda_{1}\Lambda
_{2}\ldots\Lambda_{k+1}},a\right\rangle ,
\]
and
\[
\lim_{R_{k+1}\To+\infty}\cdots\lim_{R_{1}\To+\infty}\lim_{h\To0}\left\langle
w_{\Lambda_{k+1},h,R_{1},\ldots,R_{k}}^{\Lambda_{1}\Lambda_{2}\ldots
\Lambda_{k}}\left(  t\right)  ,a\right\rangle =:\left\langle
\tilde{\mu
}_{\Lambda_{k+1}}^{\Lambda_{1}\Lambda_{2}\ldots\Lambda_{k}},a\right\rangle
.
\]

By the arguments of \S \ref{s:second}, one then shows that $\tilde{\mu}^{\Lambda_{1}\Lambda
_{2}\ldots\Lambda_{k+1}}$ and $\tilde{\mu
}_{\Lambda_{k+1}}^{\Lambda_{1}\Lambda_{2}\ldots\Lambda_{k}}$ satisfy all of the induction hypotheses at step $k+1$. In particular, we obtain the following analogues of Theorems \ref{thm 1st2micro} and \ref{Thm Properties}.

\begin{theorem}
(i)  $\tilde\mu^{\Lambda_{1}\Lambda_{2}\ldots\Lambda_{k+1}}\left(
t,\cdot\right) $ is positive, zero-homogeneous in the
variables $\eta_{1} \in\left\langle \Lambda_{1}\right\rangle ,
\ldots, \eta_{k+1} \in\left\langle
\Lambda_{k+1}\right\rangle $, and supported at infinity. It can thus be identified with a positive measure on $T^*\T^d\times \mathbb{S}_{\la\Lambda_1\ra}\times\ldots\times \mathbb{S}_{\la\Lambda_{k+1}\ra}$.

 $\tilde\mu_{\Lambda_{k+1}%
}^{\Lambda_{1}\Lambda_{2}\ldots\Lambda_{k}}\left(  t,\cdot\right)
$ is zero-homogeneous in the
variables $\eta_{1} \in\left\langle \Lambda_{1}\right\rangle ,
\ldots, \eta_{k} \in\left\langle
\Lambda_{k}\right\rangle $, and supported at infinity. It can thus be identified with a distribution on $T^*\T^d\times \mathbb{S}_{\la\Lambda_1\ra}\times\ldots\times \mathbb{S}_{\la\Lambda_{k}\ra}\times \la \Lambda_{k+1}\ra$.

 The projection of $\tilde\mu_{\Lambda_{k+1}%
}^{\Lambda_{1}\Lambda_{2}\ldots\Lambda_{k}}\left(  t,\cdot\right)
$ on $T^*\T^d\times \mathbb{S}_{\la\Lambda_1\ra}\times\ldots\times \mathbb{S}_{\la\Lambda_{k}\ra} $ is positive.

(ii) For a.e. $t\in\mathbb{R}$,  
$\tilde\mu^{\Lambda_{1}\Lambda _{2}\ldots\Lambda_{k+1}}\left(
t,\cdot\right)  $ and $\tilde\mu_{\Lambda_{k+1}}%
^{\Lambda_{1}\Lambda_{2}\ldots\Lambda_{k}}(t, .)$ satisfy the invariance
properties:

\[
\left(  \phi_{\tau}^{j}\right)  _{\ast} \tilde\mu_{\Lambda_{k+1}}%
^{\Lambda_{1}\Lambda_{2}\ldots\Lambda_{k}}(t, .)  =\tilde\mu_{\Lambda_{k+1}}%
^{\Lambda_{1}\Lambda_{2}\ldots\Lambda_{k}}(t, .)  ,
\]
\[
\left(  \phi_{\tau}^{j}\right)  _{\ast} \tilde\mu^{\Lambda_{1}\Lambda_{2}%
\ldots\Lambda_{k+1}}\left(  t,\cdot\right)  = \tilde\mu^{\Lambda_{1}%
\Lambda_{2}\ldots\Lambda_{k+1}}\left(  t,\cdot\right)  ,
\]

for $j=0, \ldots, k$, $\tau\in\R$.

(iii) Let
\[
\mu_{\Lambda_{k+1}}^{\Lambda_{1}\Lambda_{2}\ldots\Lambda_{k}}(t,
.)=\int_{R_{\Lambda_{2}}(\Lambda_{1})\times\ldots\times R_{\Lambda_{k+1}%
}(\Lambda_{k})\times\la \Lambda_{k+1}\ra}\tilde\mu_{\Lambda_{k+1}}^{\Lambda_{1}\Lambda
_{2}\ldots\Lambda_{k}}(t, ., d\eta_{1}, \ldots, d\eta_{k+1})\rceil
_{(x, \xi)\in \IT^{d}\times R_{\Lambda_{1}}},
\]
\[
\mu^{\Lambda_{1}\Lambda_{2}\ldots\Lambda_{k+1}}(t, .)=\int_{R_{\Lambda_{2}%
}(\Lambda_{1})\times\ldots\times R_{\Lambda_{k+1}}(\Lambda_{k})\times\la \Lambda_{k+1}\ra%
}\tilde\mu^{\Lambda_{1}\Lambda_{2}\ldots\Lambda_{k+1}}(t, .,
d\eta_{1}, \ldots, d\eta_{k+1})\rceil_{(x, \xi)\in \IT^{d}\times
R_{\Lambda_{1}}}.
\]

Then both
$\mu_{\Lambda_{k+1}}^{\Lambda_{1}\Lambda_{2}\ldots\Lambda_{k}}(t,
.)$ and $\mu^{\Lambda_{1}\Lambda_{2}\ldots\Lambda_{k+1}}(t, .) $
are positive measures on $T^{\ast}\mathbb{T}^{d}$, invariant by
the geodesic flow, and
satisfy:%
\begin{equation}
\mu^{\Lambda_{1}\Lambda_{2}\ldots\Lambda_{k}}\rceil_{\eta_{k}\in
R_{\Lambda_{k+1}}(\Lambda_{k})}(t, .)= \mu_{\Lambda_{k+1}}^{\Lambda_{1}%
\Lambda_{2}\ldots\Lambda_{k}}(t,
.)+\mu^{\Lambda_{1}\Lambda_{2}\ldots \Lambda_{k+1}}(t, .).
\end{equation}

\end{theorem}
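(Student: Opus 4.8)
\medskip
\noindent\textbf{Proof plan.} The statement is the step-$(k{+}1)$ analogue of Theorems~\ref{thm 1st2micro} and~\ref{Thm Properties}, and the plan is simply to reproduce the analysis of Section~\ref{s:second} one level down. The crucial observation is that the step-$k$ induction hypotheses make $\tilde{\mu}^{\Lambda_{1}\Lambda_{2}\ldots\Lambda_{k}}(t,\cdot)$ behave exactly as the semiclassical measure $\mu(t,\cdot)$ did at the start of Section~\ref{s:second}: it is positive, all its $x$-Fourier modes lie in $\Lambda_{k}$, and it is invariant under the lifted geodesic flow $\phi_{\tau}^{0}$. So first I would use the partition $\la\Lambda_{k}\ra=\bigsqcup_{\Lambda_{k+1}}R_{\Lambda_{k+1}}(\Lambda_{k})$ --- the analogue inside $\la\Lambda_{k}\ra$ of~\eqref{part}, supplied by Lemma~\ref{Lemma decomposition} --- to split $\tilde{\mu}^{\Lambda_{1}\ldots\Lambda_{k}}$ into the pieces $\tilde{\mu}^{\Lambda_{1}\ldots\Lambda_{k}}\rceil_{\eta_{k}\in R_{\Lambda_{k+1}}(\Lambda_{k})}$; then, exactly as in~\eqref{subL}, for each primitive $\Lambda_{k+1}\subset\Lambda_{k}$ I would insert the further cut-off $\chi(P_{\Lambda_{k+1}}\xi/(R_{k+1}h))$ (resp.\ $1-\chi$) to define $w^{\Lambda_{1}\ldots\Lambda_{k}}_{\Lambda_{k+1},h,R_{1},\ldots,R_{k}}$ and $w^{\Lambda_{1}\ldots\Lambda_{k+1}}_{h,R_{1},\ldots,R_{k}}$, whose iterated limits ($h\to0$, then $R_{1},\ldots,R_{k+1}\to+\infty$, along subsequences) are $\tilde{\mu}_{\Lambda_{k+1}}^{\Lambda_{1}\ldots\Lambda_{k}}$ and $\tilde{\mu}^{\Lambda_{1}\ldots\Lambda_{k+1}}$.

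For part~(i), positivity of $\tilde{\mu}^{\Lambda_{1}\ldots\Lambda_{k+1}}$ and of the projection of $\tilde{\mu}_{\Lambda_{k+1}}^{\Lambda_{1}\ldots\Lambda_{k}}$ follows from the Calder\'{o}n--Vaillancourt theorem together with the sharp G\r{a}rding-type argument of Corollary~\ref{CoroCV}, since all the cut-offs $1-\chi(\cdot/R_{j}h)$ and $\chi(\cdot/R_{k+1}h)$ are nonnegative; this is verbatim the proof of Theorem~\ref{thm 1st2micro}~(i). Zero-homogeneity and concentration at infinity in $\eta_{k+1}$ come from the factor $1-\chi(P_{\Lambda_{k+1}}\xi/(R_{k+1}h))$ and the limit $R_{k+1}\to+\infty$ (as in the discussion following Theorem~\ref{thm 1st2micro}), while the corresponding properties in $\eta_{1},\ldots,\eta_{k}$ are inherited from step $k$. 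That the $x$-Fourier modes of $\tilde{\mu}^{\Lambda_{1}\ldots\Lambda_{k}}\rceil_{\eta_{k}\in R_{\Lambda_{k+1}}(\Lambda_{k})}$ (hence of $\tilde{\mu}_{\Lambda_{k+1}}^{\Lambda_{1}\ldots\Lambda_{k}}$ and $\tilde{\mu}^{\Lambda_{1}\ldots\Lambda_{k+1}}$) all lie in $\Lambda_{k+1}$ is the computation in the proof of Lemma~\ref{LemmaFourierInvariant}, applied inside $\la\Lambda_{k}\ra$: $R_{\Lambda_{k+1}}(\Lambda_{k})\cap\{\eta:\,\ell\cdot\eta=0\}\neq\emptyset$ forces $\ell\in\Lambda_{k+1}$.

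For part~(ii), invariance under $\phi_{\tau}^{0}$ comes from the transport identity~\eqref{e:propW}--\eqref{e:LV}, exactly as in Lemma~\ref{Lemma Inv}: inserting the test symbol $a(x,\xi,P_{\Lambda_{1}}\xi/h,\ldots,P_{\Lambda_{k+1}}\xi/h)$ times the product of the cut-offs, and using that $a$ has $x$-Fourier modes only in $\Lambda_{k+1}$, the $h^{-1}\xi\cdot\partial_{x}$ term is dominant and passes in the limit to $\la\tilde{\mu}^{\Lambda_{1}\ldots\Lambda_{k+1}},\xi\cdot\partial_{x}a\ra=0$, and likewise for $\tilde{\mu}_{\Lambda_{k+1}}^{\Lambda_{1}\ldots\Lambda_{k}}$. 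Invariance under $\phi_{\tau}^{j}$ for $j=1,\ldots,k$ passes to the limit from step $k$, since the new cut-offs and rescalings in $\eta_{k+1}$ commute with translations along $\eta_{j}/|\eta_{j}|$ (here one uses $\Lambda_{k+1}\subset\Lambda_{j}$); the additional invariance of $\tilde{\mu}^{\Lambda_{1}\ldots\Lambda_{k+1}}$ under $\phi_{\tau}^{k+1}$, needed for the induction to continue, is proved as in the subsection on $\tilde{\mu}^{\Lambda}$, multiplying by $1-\chi(\eta_{k+1}/R)$ and using that the operator with symbol homogeneous of degree $-1$ has norm $O(1/R)$ by Calder\'{o}n--Vaillancourt. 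The operator-valued trace formula~\eqref{e:fullintrinsic} is then obtained by repeating~\S\ref{s:sy}: the change of variables $T_{\Lambda_{k+1}}$ on $\T_{\Lambda_{k+1}^{\perp}}\times\T_{\Lambda_{k+1}}$, identity~\eqref{ChangeCoord}, and Proposition~\ref{p:weakstarlimit} applied to the analogue of~\eqref{e:nh} give the representation with an operator-valued measure $\tilde{\rho}_{\Lambda_{k+1}}^{\Lambda_{1}\ldots\Lambda_{k}}$; specializing to symbols independent of the last $\eta$-variable yields, as in~\eqref{e:intrinsic}, the absolute continuity of the projection of $\tilde{\mu}_{\Lambda_{k+1}}^{\Lambda_{1}\ldots\Lambda_{k}}$ on $\T^{d}$.

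Part~(iii) then follows by integrating the positive objects of~(i)--(ii) in $\eta_{1},\ldots,\eta_{k+1}$ and restricting $(x,\xi)$ to $\T^{d}\times R_{\Lambda_{1}}$: positivity survives, $\phi_{\tau}^{0}$-invariance becomes invariance under the geodesic flow on $T^{\ast}\T^{d}$, and the decomposition identity
\[
\mu^{\Lambda_{1}\Lambda_{2}\ldots\Lambda_{k}}\rceil_{\eta_{k}\in R_{\Lambda_{k+1}}(\Lambda_{k})}(t,\cdot)=\mu_{\Lambda_{k+1}}^{\Lambda_{1}\Lambda_{2}\ldots\Lambda_{k}}(t,\cdot)+\mu^{\Lambda_{1}\Lambda_{2}\ldots\Lambda_{k+1}}(t,\cdot)
\]
is the analogue of Remark~\ref{Rmk Sum}, since $\chi(\cdot/R_{k+1}h)+(1-\chi(\cdot/R_{k+1}h))=1$. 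I expect the main obstacle to be bookkeeping rather than any single estimate: one must check that the reduction to $x$-Fourier modes in $\Lambda_{k+1}$ is exact after restriction to $\eta_{k}\in R_{\Lambda_{k+1}}(\Lambda_{k})$, and one must justify taking the now $(k{+}1)$-fold iterated limit in $h,R_{1},\ldots,R_{k+1}$ in the prescribed order, with all remainders controlled uniformly by the Calder\'{o}n--Vaillancourt theorem and the operator-valued symbolic calculus of~\cite{GerardMDM91}.
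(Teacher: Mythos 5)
Your proposal is correct and follows exactly the approach the paper intends: the paper itself states that "the ideas are identical to those of Sections \ref{s:decompo} and \ref{s:second}, and detailed proofs will be omitted," and your proposal is a faithful, accurate expansion of precisely those omitted details (the partition of $\la\Lambda_k\ra$ via $R_{\Lambda_{k+1}}(\Lambda_k)$, the new cut-off in $P_{\Lambda_{k+1}}\xi/h$, the Calder\'on--Vaillancourt/sharp-G\aa rding positivity argument, the Fourier-mode reduction to $\Lambda_{k+1}$ via Lemma \ref{LemmaFourierInvariant}, and the transport argument for $\phi^0_\tau$-invariance). Note only that the operator-valued trace formula, absolute continuity of the projection, and $\phi_\tau^{k+1}$-invariance that you also sketch belong to the companion theorem (the step-$(k{+}1)$ analogue of Theorem \ref{Thm Properties}) rather than to the statement under review, but this does not affect correctness.
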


 \begin{theorem}
(i) For a.e. $t\in\mathbb{R}$,  $\tilde\mu_{\Lambda_{k+1}}%
^{\Lambda_{1}\Lambda_{2}\ldots\Lambda_{k}}(t, .)$ is supported on
$\T^d\times \Lambda_{k+1}^\bot\times \mathbb{S}_{\la\Lambda_1\ra}\times\ldots\times \mathbb{S}_{\la\Lambda_{k}\ra}\times \la \Lambda_{k+1}\ra$
and its projection on $\mathbb{T}^{d}$ is absolutely continuous with
respect to the Lebesgue measure.

(ii) The measure
$\tilde\mu^{\Lambda_{1}\Lambda _{2}\ldots\Lambda_{k+1}}\left(
t,\cdot\right)  $ satisfies the additional invariance
properties:%
\[
\left(  \phi_{\tau}^{k+1}\right)  _{\ast} \tilde\mu^{\Lambda_{1}\Lambda_{2}%
\ldots\Lambda_{k+1}}\left(  t,\cdot\right)  = \tilde\mu^{\Lambda_{1}%
\Lambda_{2}\ldots\Lambda_{k+1}}\left(  t,\cdot\right)  ,
\]
for $\tau\in\R$.
\end{theorem}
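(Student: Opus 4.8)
The plan is to repeat, essentially verbatim, the proofs of Theorems~\ref{thm 1st2micro} and~\ref{Thm Properties} from Section~\ref{s:second}, now with $\Lambda_{k+1}$ playing the role of $\Lambda$ and with the two-microlocal variables $\eta_1,\dots,\eta_k$ treated as inert parameters ranging over the spheres $\mathbb{S}_{\la\Lambda_1\ra},\dots,\mathbb{S}_{\la\Lambda_k\ra}$. Two structural facts make the induction close: the nesting $\Lambda_1\supset\dots\supset\Lambda_{k+1}$ gives $\la\Lambda_{k+1}\ra\subseteq\la\Lambda_j\ra$ for every $j\le k$, so $P_{\Lambda_{k+1}}$ and the $P_{\Lambda_j}$ are compatible; and, as recalled just above (the proof of Lemma~\ref{LemmaFourierInvariant}), every $x$-Fourier mode of $\tilde{\mu}^{\Lambda_1\dots\Lambda_k}\rceil_{\eta_k\in R_{\Lambda_{k+1}}(\Lambda_k)}$ lies in $\Lambda_{k+1}$, which is why $\cS^{k+1}_{\Lambda_1,\dots,\Lambda_{k+1}}$ is the natural test class. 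As in Section~\ref{s:second}, hypothesis~(R) on $V$ will be used at exactly one point, the estimate of the commutator term $\mathcal{L}_V^h$.

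For part~(i), I would first redo the change of variables of~\S\ref{s:sy} with $\Lambda_{k+1}$ in place of $\Lambda$ --- applying the isomorphism $T_{\Lambda_{k+1}}$ and the intertwining identity~\eqref{ChangeCoord}, the variables $\eta_1,\dots,\eta_k$ being frozen --- and then run the operator-valued weak-$\ast$ limit of Proposition~\ref{p:weakstarlimit}. This produces a positive measure $\tilde{\rho}_{\Lambda_{k+1}}^{\Lambda_1\dots\Lambda_k}(t,\cdot)$ on $T^\ast\T_{\Lambda_{k+1}^\perp}\times\mathbb{S}_{\la\Lambda_1\ra}\times\dots\times\mathbb{S}_{\la\Lambda_k\ra}$, taking values in $\cL^1\bigl(T_{\Lambda_{k+1}}L^2_{\Lambda_{k+1}}(\T^d)\bigr)$, for which~\eqref{e:fullintrinsic} holds at level $l=k+1$ (the analogue of~\eqref{e:intrinsic}). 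The support statement then has two halves. The cut-off $\chi\bigl(P_{\Lambda_{k+1}}(\xi)/(R_{k+1}h)\bigr)$ built into $\tilde{\mu}_{\Lambda_{k+1}}^{\Lambda_1\dots\Lambda_k}$ confines the underlying Wigner distributions to $|P_{\Lambda_{k+1}}(\xi)|\le CR_{k+1}h$, so letting $h\to0^+$ forces the $(x,\xi)$-marginal of $\tilde{\mu}_{\Lambda_{k+1}}^{\Lambda_1\dots\Lambda_k}$ to live over $\xi\in\Lambda_{k+1}^\perp$; the $0$-homogeneity and support at infinity in $\eta_1,\dots,\eta_k$ is exactly the content of the preceding theorem. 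For the absolute continuity, take $a=a(x)$ in~\eqref{e:fullintrinsic}; as in the passage from~\eqref{e:muLtilda} to~\eqref{e:intrinsic}, the right-hand side becomes $\Tr_{L^2_{\Lambda_{k+1}}(\T^d)}\bigl(m_a\,\Sigma(t)\bigr)$ with
\[
\Sigma(t):=\int T_{\Lambda_{k+1}}^\ast\,\tilde{\rho}_{\Lambda_{k+1}}^{\Lambda_1\dots\Lambda_k}(t,ds,d\sigma,d\eta_1,\dots,d\eta_k)\,T_{\Lambda_{k+1}}
\]
a fixed nonnegative trace-class operator; diagonalising $\Sigma(t)=\sum_j\lambda_j(t)\,|e_j(t)\rangle\langle e_j(t)|$ with $\lambda_j(t)\ge0$, $\sum_j\lambda_j(t)<\infty$, exhibits the density of the $\T^d$-marginal of $\tilde{\mu}_{\Lambda_{k+1}}^{\Lambda_1\dots\Lambda_k}$ as $\sum_j\lambda_j(t)\,|e_j(t,x)|^2\in L^1(\T^d)$.

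For part~(ii), I would repeat verbatim the computation that gives~\eqref{2minv} in Section~\ref{s:second}. Given $a\in\cS^{k+1}_{\Lambda_1,\dots,\Lambda_{k+1}}$, set $a^{R_{k+1}}:=\bigl(1-\chi(\eta_{k+1}/R_{k+1})\bigr)a$, which vanishes near $\eta_{k+1}=0$; since $a$ has only $x$-Fourier modes in $\Lambda_{k+1}$ one has $\xi\cdot\partial_x a^{R_{k+1}}=P_{\Lambda_{k+1}}(\xi)\cdot\partial_x a^{R_{k+1}}$. Inserting $\eta_j=P_{\Lambda_j}(\xi)/h$ and applying the propagation identity~\eqref{e:propW}--\eqref{e:LV} for $w_h$ to the symbol $|\eta_{k+1}|^{-1}a^{R_{k+1}}$ (carrying in addition the earlier escape cut-offs) expresses the paired quantity $\bigl\langle\cdot,\tfrac{\eta_{k+1}}{|\eta_{k+1}|}\cdot\partial_x a\bigr\rangle$ in terms of a $\varphi'$-term and a commutator term, each paired with $|\eta_{k+1}|^{-1}a^{R_{k+1}}$. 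Since that symbol is homogeneous of degree $-1$ in $\eta_{k+1}$ and supported in $|\eta_{k+1}|\gtrsim R_{k+1}$, the Calder\'on--Vaillancourt theorem (appendix) gives that the associated operator --- and hence also its commutator with $V$, after using~(R) to replace $V$ by a continuous approximant --- has operator norm $O(1/R_{k+1})$, uniformly in $h$ and in $\eta_1,\dots,\eta_k$. Letting $h\to0^+$, then $R_1\to\infty,\dots,R_k\to\infty$, then $R_{k+1}\to\infty$, the right-hand side drops out and one is left with $\tfrac{\eta_{k+1}}{|\eta_{k+1}|}\cdot\nabla_x\tilde{\mu}^{\Lambda_1\dots\Lambda_{k+1}}(t,\cdot)=0$ for a.e.\ $t$, which is precisely $(\phi_\tau^{k+1})_\ast\tilde{\mu}^{\Lambda_1\dots\Lambda_{k+1}}(t,\cdot)=\tilde{\mu}^{\Lambda_1\dots\Lambda_{k+1}}(t,\cdot)$.

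The step I expect to be the main obstacle is purely organisational: one must take the iterated limits $h\to0^+$, then $R_1\to\infty,\dots,R_{k+1}\to\infty$ in the prescribed order, checking at each stage that the cut-off $1-\chi\bigl(P_{\Lambda_j}(\xi)/(R_jh)\bigr)$ manufactures a genuine new variable on $\mathbb{S}_{\la\Lambda_j\ra}$ while those already produced stay inert, and that the operator-valued symbolic calculus of Proposition~\ref{p:weakstarlimit} and Remark~\ref{rmkCompact} carries over with $\eta_1,\dots,\eta_k$ as frozen parameters. Because $\la\Lambda_{k+1}\ra\subseteq\la\Lambda_j\ra$ and the relevant integral kernels are constant along $\Lambda_{k+1}^\perp$, this needs no analytic input beyond Section~\ref{s:second} and reduces to careful bookkeeping.
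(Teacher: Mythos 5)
Your proposal is correct and follows essentially the same approach as the paper, which at this point merely asserts that the ideas of Sections~\ref{s:decompo} and~\ref{s:second} apply verbatim and omits the details. Your account supplies exactly the bookkeeping the authors have in mind: the change of variables via $T_{\Lambda_{k+1}}$ with $\eta_1,\ldots,\eta_k$ frozen, the trace-class operator $\Sigma(t)$ and its diagonalisation producing the $L^1$ density for part~(i), and the observation that $x$-Fourier modes in $\Lambda_{k+1}$ reduce $\xi\cdot\partial_x$ to $P_{\Lambda_{k+1}}(\xi)\cdot\partial_x$ together with the degree-$(-1)$ homogeneity and Calder\'on--Vaillancourt estimate for part~(ii).
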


 The ideas are identical to those of Sections
\ref{s:decompo} and \ref{s:second},
and detailed proofs will be omitted. 

\begin{remark}
By construction, if $\Lambda_{k+1}=\{0\}$, we have
$\tilde\mu^{\Lambda _{1}\Lambda_{2}\ldots\Lambda_{k+1}}=0$, and
the induction stops. The measure $\mu^{\Lambda _{1}\Lambda_{2}\ldots\Lambda_{k}}_{\Lambda_{k+1}}$ is then constant in $x$.

Similarly to Remark \ref{r:nice}, one can
also see that if $\operatorname*{rk}
\Lambda_{k+1}=1$, the invariance properties of $\tilde\mu^{\Lambda_{1}%
\Lambda_{2}\ldots\Lambda_{k+1}}$ imply that it is constant in $x$.
\end{remark}

\vspace{.5cm}

{\bf Proof of 
Theorem \ref{t:main} (i) and of Corollary \ref{t:example}.} We write 
\[
\mu(t,.)=\sum_{1\leq l\leq d+1}\:\sum_{\Lambda_{1}\supset\Lambda_{2}%
\supset\ldots\supset\Lambda_{l}}\mu_{\Lambda_{l}}^{\Lambda_{1}\Lambda
_{2}\ldots\Lambda_{l-1}}(t,.) ,
\]
and we know that each term is a positive measure on $T^*\T^d$, whose projection on $\T^d$ is absolutely continuous.
This proves Theorem \ref{t:main} (i).

 Corollary \ref{t:example} is a direct consequence of Theorem \ref{t:main} (i) and
of the identity \eqref{e:proj}, with one little subtlety. Because $T^{\ast
}\mathbb{T}^{d}$ is not compact, if $w_{h}$ converges weakly-$\ast$ to $\mu$
and $\left(  \int_{0}^{1}|U_{V}(t)u_{h}(x)|^{2}dt\right)  dx$ converges
weakly-$\ast$ to a probability measure $\nu$ on $\mathbb{T}^{d}$, it does not
follow automatically that
\[
\nu=\int_{0}^{1}\int_{\R^{d}}\mu(t,\cdot,d\xi)dt.
\]
This is only true if we know a priori that $\int_{\mathbb{T}^{d}\times\R^{d}%
}\mu(t,dx,d\xi)=1$ for almost all $t$, which means that there is no escape of
mass to infinity. To check that Theorem \ref{t:main} implies Corollary
\ref{t:example}, we must explain why, for any normalized sequence $(u_{n})\in
L^{2}(\mathbb{T}^{d})$, we can find a sequence of parameters $h_{n}\To0$ such
that the sequence $w_{u_{n}}^{h_{n}}$ does not escape to infinity. Let us
choose $h_{n}$ such that
\begin{equation}
\sum_{k\in\Z^{d},\norm{k}\leq h_{n}^{-1}}|\hat{u}_{n}(k)|^{2}\Lim_{n\To+\infty
}1, \label{e:trunc}%
\end{equation}
which is always possible. If we let $\tilde{u}_{n}(x)=\sum_{k\in
\Z^{d},\norm{k}\leq h_{n}^{-1}}\hat{u}_{n}(k)\frac{e^{ik.x}}{\left(
2\pi\right)  ^{d/2}}$, equation \eqref{e:trunc} implies that $w_{\tilde{u}%
_{n}}^{h_{n}}$ has the same limit as $w_{u_{n}}^{h_{n}}$. On the other hand
$w_{\tilde{u}_{n}}^{h_{n}}$ is supported in the compact set $\mathbb{T}%
^{d}\times B(0,1)\subset\mathbb{T}^{d}\times\R^{d}$. Thus $w_{\tilde{u}_{n}%
}^{h_{n}}$ cannot escape to infinity. Let us point out that with this choice
of scale $(h_{n})$, the sequence $(u_{n})$ becomes $h_{n}$-oscillating, in the
terminology introduced in \cite{GerardMesuresSemi91, GerLeich93}.

\section{Propagation law for $\tilde\rho_\Lambda$\label{s:propagation}}

We now study how $\tilde\rho_\Lambda(t, \cdot)$ (defined in
Proposition \ref{p:weakstarlimit} \eqref{e:rholambda}) depends on
$t$. This will allow us to complete the proof of Theorem
\ref{t:precise} and will be crucial in the proof of the
observability inequality, Theorem \ref{t:obs}. We use the notation
of \S \ref{s:sy}. In particular, $s$ will always be a variable in
$\mathbb{T}_{\Lambda^{\perp}}$, and $y$ a variable in
$\mathbb{T}_{\Lambda }$.

In order to state our main result, let us introduce some notation.
Let $\widehat{V}_{k}\left(  t\right)  $, $k\in\mathbb{Z}$, denote
the Fourier coefficients of the potential $V\left(  t,\cdot\right)
$. We denote by $\left\langle V\right\rangle _{\Lambda}\left(
t,\cdot\right)  $ the average of $V\left(  t,\cdot\right)  $ along
$ \Lambda^\bot $,
in other words~:%
\[
\left\langle V\right\rangle _{\Lambda}\left(  t,\cdot\right)
:=\sum _{k\in\Lambda}\widehat{V}_{k}\left(  t\right)
\frac{e^{ik\cdot x}}{\left( 2\pi\right)  ^{d/2}}.
\]
We put $H_{\left\langle V\right\rangle _{\Lambda}}^{\Lambda}\left(
t\right) :=-\frac{1}{2}\Delta_{\Lambda}+\left\langle
V\right\rangle _{\Lambda}\left(  t,\cdot\right)  $ where $\Delta_{\Lambda}$ is the Laplacian on
$\la\Lambda\ra$, and denote by
$U_{\left\langle V\right\rangle _{\Lambda}}^{\Lambda}\left(
t\right)  $ the unitary evolution in $L^{2}\left(
\mathbb{T}_{\Lambda}\right)  $, starting at $t=0$, generated by
$H_{\left\langle V\right\rangle _{\Lambda}}^{\Lambda}\left(
t\right)  $.

\begin{proposition}\label{p:average} Let $\tilde{\rho}_{\Lambda}\in L^{\infty}\left(  \mathbb{R};\mathcal{M}%
_{+}\left(
T^{\ast}\mathbb{T}_{\Lambda^{\perp}};\mathcal{L}^{1}\left(
L^{2}\left(  \mathbb{T}_{\Lambda}\right)  \right)  \right)
\right)  $ be a limit of $\left(  n_{h}^{\Lambda}\right)  $ as in
Proposition \ref{p:weakstarlimit}.

Let  $(s, \sigma)\mapsto K( \sigma)$  be a function in $ C_{c}^{\infty}\left(  T^{\ast}\mathbb{T}_{\Lambda^{\perp}%
};\mathcal{K}\left(  L^{2}\left(  \mathbb{T}_{\Lambda}\right)
\right) \right)  $ that does not depend on $s$.

Then
\[\frac{d}{dt}
  \Tr\int_{\mathbb{T}%
_{\Lambda^{\perp}}\times R_{\Lambda}}K\left(  \sigma\right)
\tilde{\rho}_{\Lambda}\left( t,ds,d\sigma\right)
= i \Tr\int_{\mathbb{T}%
_{\Lambda^{\perp}}\times R_{\Lambda}}%
\left[  H_{\la V\ra_{\Lambda}}%
^{\Lambda}\left(  t,\cdot\right), K\left(  \sigma\right) \right]
\tilde{\rho}_{\Lambda}\left(  t,ds,d\sigma\right).
\]
\end{proposition}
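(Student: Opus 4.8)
Here is a plan for proving Proposition \ref{p:average}.

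\medskip

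The plan is to differentiate the defining quantity \eqref{e:nh} in $t$ by means of the Schr\"odinger equation, to expand the resulting commutator, and then to let $h\to0^{+}$; the delicate point will be that, once the integration is restricted to $R_{\Lambda}$, only the average $\left\langle V\right\rangle _{\Lambda}$ of the potential survives. First I would pass to the covering torus. Put $v_{h}(t):=T_{\Lambda}U_{V}(t)u_{h}$. Since $\tilde{\chi}_{\Lambda}$ is linear and orthogonal, $T_{\Lambda}$ intertwines $-\frac12\Delta+V(t,\cdot)$ with $-\frac12\Delta_{\Lambda^{\perp}}-\frac12\Delta_{\Lambda}+V\circ\pi_{\Lambda}(t,s,y)$ on $L^{2}(\mathbb{T}_{\Lambda^{\perp}}\times\mathbb{T}_{\Lambda})$, where $\Delta_{\Lambda^{\perp}}$ is the Laplacian in $s\in\mathbb{T}_{\Lambda^{\perp}}$ and $\Delta_{\Lambda}$ the one in $y\in\mathbb{T}_{\Lambda}$; hence $v_{h}$ solves $i\partial_{t}v_{h}=\bigl(-\frac12\Delta_{\Lambda^{\perp}}-\frac12\Delta_{\Lambda}+V\circ\pi_{\Lambda}(t)\bigr)v_{h}$. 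For $K=K(\sigma)$ independent of $s$ the operator $K(hD_{s})$ is the operator-valued Fourier multiplier $\sigma\mapsto K(h\sigma)$, and, interpreting the identity weakly in $t$ against $\varphi\in C_{c}^{1}(\mathbb{R})$ (which is all that is needed, and is legitimate since $V\in L^{\infty}$), one has
\[
\frac{d}{dt}\langle n_{h}^{\Lambda}(t),K\rangle = i\Bigl\langle v_{h}(t),\,\Bigl[-\tfrac12\Delta_{\Lambda^{\perp}}-\tfrac12\Delta_{\Lambda}+V\circ\pi_{\Lambda}(t),\,K(hD_{s})\Bigr]v_{h}(t)\Bigr\rangle .
\]

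\medskip

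Next I would expand this commutator. Because $K$ does not depend on $s$, both $-\frac12\Delta_{\Lambda^{\perp}}$ and $K(hD_{s})$ are functions of $D_{s}$, so $[-\frac12\Delta_{\Lambda^{\perp}},K(hD_{s})]=0$; this is precisely where the $s$-independence is used, since it suppresses the transport term of order $h^{-1}$ responsible for the geodesic invariance in Proposition \ref{p:weakstarlimit}. As $-\frac12\Delta_{\Lambda}$ acts on the $y$-variable alone, $[-\frac12\Delta_{\Lambda},K(hD_{s})]=\Op_{h}^{\Lambda^{\perp}}\bigl(\sigma\mapsto[-\frac12\Delta_{\Lambda},K(\sigma)]\bigr)$, the inner bracket taken in $L^{2}(\mathbb{T}_{\Lambda})$. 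For the potential I would write $V\circ\pi_{\Lambda}(t,s,y)=\left\langle V\right\rangle _{\Lambda}(t,y)+W(t,s,y)$, where $\left\langle V\right\rangle _{\Lambda}(t,\cdot)$ is the average over $s\in\mathbb{T}_{\Lambda^{\perp}}$ — which has $x$-Fourier modes only in $\Lambda$ and, as a function of $y$, is exactly the potential defining $H_{\left\langle V\right\rangle _{\Lambda}}^{\Lambda}$ — and $W(t,s,y)=\sum_{\ell\neq0}e^{i\ell\cdot s}V_{\ell}^{\Lambda}(t,y)$ has vanishing $s$-average, the sum running over the nonzero $s$-frequencies $\ell=P_{\Lambda^{\perp}}k$, $k\in\IZ^{d}$. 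Then $[\left\langle V\right\rangle _{\Lambda}(t,\cdot),K(hD_{s})]=\Op_{h}^{\Lambda^{\perp}}\bigl(\sigma\mapsto[m_{\left\langle V\right\rangle _{\Lambda}(t,\cdot)},K(\sigma)]\bigr)$, while, writing $M_{\ell}$ for multiplication by $e^{i\ell\cdot s}$ and using $M_{\ell}^{-1}K(hD_{s})M_{\ell}=K(hD_{s}+h\ell)$,
\[
[W(t,\cdot,\cdot),K(hD_{s})]=\sum_{\ell\neq0}M_{\ell}\,\Op_{h}^{\Lambda^{\perp}}\bigl(\sigma\mapsto[m_{V_{\ell}^{\Lambda}(t,\cdot)},K(\sigma)]\bigr)+\cO_{\mathcal{L}(L^{2})}(h),
\]
since $[M_{\ell},K(hD_{s})]=M_{\ell}\bigl(K(hD_{s})-K(hD_{s}+h\ell)\bigr)$ has operator norm $\cO(h|\ell|)$; to make the series and the error summable one first replaces $V$ by a continuous approximant furnished by assumption (R) and removes the approximation at the end. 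Since $-\frac12\Delta_{\Lambda}+m_{\left\langle V\right\rangle _{\Lambda}(t,\cdot)}=H_{\left\langle V\right\rangle _{\Lambda}}^{\Lambda}(t)$, and letting $h\to0^{+}$ along the subsequence defining $\tilde{\rho}_{\Lambda}$ (the $\cO(h)$ remainders dropping out by Calder\'on--Vaillancourt together with $\|v_{h}(t)\|\le1$), I obtain, weakly in $t$,
\begin{multline*}
\frac{d}{dt}\Tr\int_{T^{\ast}\mathbb{T}_{\Lambda^{\perp}}}K(\sigma)\,\tilde{\rho}_{\Lambda}(t,ds,d\sigma)
= i\,\Tr\int_{T^{\ast}\mathbb{T}_{\Lambda^{\perp}}}\bigl[H_{\left\langle V\right\rangle _{\Lambda}}^{\Lambda}(t),K(\sigma)\bigr]\tilde{\rho}_{\Lambda}(t,ds,d\sigma)\\
+\,i\sum_{\ell\neq0}\Tr\int_{T^{\ast}\mathbb{T}_{\Lambda^{\perp}}}e^{i\ell\cdot s}\bigl[m_{V_{\ell}^{\Lambda}(t,\cdot)},K(\sigma)\bigr]\tilde{\rho}_{\Lambda}(t,ds,d\sigma).
\end{multline*}
(The commutators with the unbounded $\Delta_{\Lambda}$ are read through the cyclicity of the trace, $\Tr([H,K]\tilde{\rho})=\Tr(K[\tilde{\rho},H])$, using that $\tilde{\rho}_{\Lambda}(t,\cdot)$ is $\mathcal{L}^{1}$-valued, or one first argues on the dense subclass of $K$ with smoothing values.)

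\medskip

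Finally I would restrict the integration to $R_{\Lambda}$, which eliminates the last sum. By Proposition \ref{p:weakstarlimit}, $\tilde{\rho}_{\Lambda}(t,\cdot)$ is invariant under $(s,\sigma)\mapsto(s+\tau\sigma,\sigma)$; as in the proof of Lemma \ref{LemmaFourierInvariant} (the argument leading to \eqref{suppmuk}), its $s$-Fourier coefficients satisfy $\operatorname{supp}\widehat{\tilde{\rho}_{\Lambda}}(t,\ell,\cdot)\subset\{\sigma\in\Lambda^{\perp}:\ell\cdot\sigma=0\}$. Now for $\sigma\in R_{\Lambda}$ one has $\Lambda_{\sigma}=\Lambda$, and since $\ell=P_{\Lambda^{\perp}}k$ with $\sigma\in\Lambda^{\perp}$ gives $\ell\cdot\sigma=k\cdot\sigma$, the relation $\ell\cdot\sigma=0$ forces $k\in\Lambda$, i.e.\ $\ell=0$; hence $\{\sigma:\ell\cdot\sigma=0\}\cap R_{\Lambda}=\emptyset$ for $\ell\neq0$, so $\tilde{\rho}_{\Lambda}(t,\cdot)\rceil_{\mathbb{T}_{\Lambda^{\perp}}\times R_{\Lambda}}$ is constant in $s$ and each $\ell\neq0$ term, once integrated over $\mathbb{T}_{\Lambda^{\perp}}\times R_{\Lambda}$, pairs $e^{i\ell\cdot s}$ against the Haar measure in $s$ and vanishes. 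Restricting the displayed identity to $\mathbb{T}_{\Lambda^{\perp}}\times R_{\Lambda}$ — which is legitimate because $R_{\Lambda}$ is a flow-invariant Borel subset of $\Lambda^{\perp}$, the first term on the right is local in $\sigma$, and the $\ell\neq0$ contributions are carried by the hyperplanes $\{\ell\cdot\sigma=0\}$ disjoint from $R_{\Lambda}$ — yields exactly the asserted propagation law. The main obstacle is this middle part: separating the $s$-average of the potential and proving that, in the limit $h\to0^{+}$, the off-average piece $W$ contributes only through the operators $M_{\ell}[m_{V_{\ell}^{\Lambda}},K(\sigma)]$ up to $o(1)$ under the mere regularity (R), together with the fact that precisely these $\ell\neq0$ contributions disappear upon restriction to $R_{\Lambda}$ — the operator-valued analogue of Lemma \ref{LemmaFourierInvariant}, which is what makes the averaged Hamiltonian $H_{\left\langle V\right\rangle _{\Lambda}}^{\Lambda}$ rather than $-\frac12\Delta+V$ govern the propagation of $\tilde{\rho}_{\Lambda}\rceil_{R_{\Lambda}}$.
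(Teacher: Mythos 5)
Your plan follows the same high-level route as the paper: differentiate $\langle n_h^\Lambda(t),K\rangle$ via the Schr\"odinger equation, kill $[\Delta_{\Lambda^\perp},K(hD_s)]$ using the $s$-independence of $K$, pass to the limit, and then exploit the $s$-invariance of $\tilde\rho_\Lambda\rceil_{R_\Lambda}$ (via the argument of Lemma~\ref{LemmaFourierInvariant}) to make the $s$-average of the potential appear. This is structurally what the paper does (it states an intermediate Lemma~\ref{l:Cinfty} with the full potential $H_V^\Lambda(t,s)$ and then averages in $s$). However, there is one organizational difference and one genuine gap, and I want to flag both.

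The organizational difference: you decompose $V\circ\pi_\Lambda=\langle V\rangle_\Lambda+W$ \emph{before} taking $h\to0$, expanding $W$ in a Fourier series $\sum_{\ell\neq0}e^{i\ell\cdot s}V_\ell^\Lambda$ in $s$ and handling each mode. This introduces summability problems: the operator-norm error from $[M_\ell,K(hD_s)]$ is $\mathcal O(h|\ell|)$, so summing over $\ell$ requires decay of the $s$-Fourier coefficients, i.e.\ more regularity than $L^\infty$ or even continuity. Moreover you must justify interchanging the infinite sum with the limit $h\to0$. The paper sidesteps all of this by passing to the limit first with the \emph{full} (undecomposed) potential, establishing Lemma~\ref{l:Cinfty}, and only afterwards using the disintegration $\tilde\rho_\Lambda\rceil_{\mathbb T_{\Lambda^\perp}\times R_\Lambda}=ds\otimes(\cdots)$ to replace $\int_{\mathbb T_{\Lambda^\perp}}H_V^\Lambda(t,s)\,ds$ by $H_{\langle V\rangle_\Lambda}^\Lambda(t)$. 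This is cleaner and avoids the need for any Fourier expansion in $s$.

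The genuine gap concerns the low regularity of $V$. You write that ``to make the series and the error summable one first replaces $V$ by a continuous approximant furnished by assumption (R) and removes the approximation at the end.'' But condition~(R) does \emph{not} furnish an $L^\infty$-close continuous approximant: $V_\varepsilon$ only satisfies $|V-V_\varepsilon|\leq\varepsilon$ \emph{outside} a compact set $K_\varepsilon$ of small Lebesgue measure, and on $K_\varepsilon$ the difference is only bounded by $2\|V\|_{L^\infty}$. The term $V(1-\chi_\varepsilon)$ (supported near $K_\varepsilon$) therefore cannot be estimated in operator norm; it can only be controlled by showing that its contribution to the trace pairing against $\tilde\rho_\Lambda$ is small. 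The paper does this by writing it as $\int_0^T|\theta(t)|\,\|(1-\chi_\varepsilon)u_h(t)\|^2\,dt$ and invoking Corollary~\ref{t:example} — the \emph{absolute continuity} of the limit density on $\mathbb T^d$ — to conclude that this quantity vanishes as $\varepsilon\to0$ because $1-\chi_\varepsilon$ is supported on a set of small Lebesgue measure. That is a nontrivial structural input (it uses the full two-microlocal decomposition of \S\ref{s:successive}) that your proposal does not identify and cannot replace with a naive density argument. Without this step the proof does not close for potentials satisfying only (R).
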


\begin{corollary}\label{c:propagation} Let $\mu_{\Lambda}\left(  t,\cdot\right)  $ be the measure defined in
Theorem \ref{thm 1st2micro}.

For any $a\in C_{c}^{\infty}\left(  T^{\ast}\mathbb{T}^{d}\right)
$ with
Fourier coefficients in $\Lambda$ the following holds:%
\[
\int_{T^*\mathbb{T}^{d} }a\left(  x,\xi\right)
\mu_{\Lambda }\left(  t,dx,d\xi\right)  =\Tr\left(
\int_{\T_{\Lambda^\bot}\times R_\Lambda}U_{\left\langle
V\right\rangle _{\Lambda}}^{\Lambda}\left( t\right)
^{\ast}m_{a\circ\pi_{\Lambda}}\left(  \sigma\right)
U_{\left\langle V\right\rangle _{\Lambda}}^{\Lambda}\left(
t\right)  \tilde{\rho}_{\Lambda}\left( 0,ds,d\sigma\right) \right)
.
\]

\end{corollary}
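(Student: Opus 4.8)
The plan is to derive Corollary~\ref{c:propagation} from Proposition~\ref{p:average} in two stages: first solving the operator ODE given by Proposition~\ref{p:average} to express $\tilde\rho_\Lambda(t,\cdot)$ in terms of $\tilde\rho_\Lambda(0,\cdot)$ via conjugation by $U^\Lambda_{\la V\ra_\Lambda}(t)$, and then plugging this into the formula \eqref{e:muLtilda} (or its intrinsic version \eqref{e:intrinsic}) for $\tilde\mu_\Lambda$, integrating over $\eta$, and restricting to $\T^d\times R_\Lambda$ to reach $\mu_\Lambda(t,\cdot)$.

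First I would fix a test function $K\in C_c^\infty(T^\ast\T_{\Lambda^\perp};\mathcal K(L^2(\T_\Lambda)))$ independent of $s$ and consider the scalar quantity $F_K(t):=\Tr\int_{\T_{\Lambda^\perp}\times R_\Lambda}K(\sigma)\,\tilde\rho_\Lambda(t,ds,d\sigma)$. Proposition~\ref{p:average} says $F_K$ is (weakly) differentiable with $\frac{d}{dt}F_K(t)=i\,\Tr\int_{\T_{\Lambda^\perp}\times R_\Lambda}[H^\Lambda_{\la V\ra_\Lambda}(t),K(\sigma)]\,\tilde\rho_\Lambda(t,ds,d\sigma)$. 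The key observation is that this is exactly the weak formulation of the statement that the $\mathcal L^1$-valued measure $t\mapsto \tilde\rho_\Lambda(t,\cdot)\rceil_{R_\Lambda}$ satisfies the Heisenberg-type equation $\partial_t\tilde\rho_\Lambda=i[\tilde\rho_\Lambda,H^\Lambda_{\la V\ra_\Lambda}(t)]$ fibrewise in $\sigma$ (the operator $H^\Lambda_{\la V\ra_\Lambda}(t)$ acting on the $L^2(\T_\Lambda)$-factor, the measure in $(s,\sigma)$ being merely transported trivially since $K$ does not depend on $s$ and, by the geodesic-flow invariance from Proposition~\ref{p:weakstarlimit}, testing against $s$-independent $K$ loses no information on $R_\Lambda$). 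Since $U^\Lambda_{\la V\ra_\Lambda}(t)$ is precisely the unitary propagator generated by $H^\Lambda_{\la V\ra_\Lambda}(t)$, the unique solution with the prescribed value at $t=0$ is
\[
\tilde\rho_\Lambda(t,ds,d\sigma)\rceil_{R_\Lambda}=U^\Lambda_{\la V\ra_\Lambda}(t)\,\tilde\rho_\Lambda(0,ds,d\sigma)\rceil_{R_\Lambda}\,U^\Lambda_{\la V\ra_\Lambda}(t)^\ast,
\]
equivalently $F_K(t)=\Tr\int_{\T_{\Lambda^\perp}\times R_\Lambda}U^\Lambda_{\la V\ra_\Lambda}(t)^\ast K(\sigma)U^\Lambda_{\la V\ra_\Lambda}(t)\,\tilde\rho_\Lambda(0,ds,d\sigma)$; one checks this by differentiating the right-hand side, using the Duhamel/Leibniz rule and the cyclicity of the trace, and invoking a uniqueness argument for the resulting linear ODE in the Banach space $\mathcal L^1$ (a Gr\"onwall estimate, using that $\la V\ra_\Lambda\in L^\infty$ so $H^\Lambda_{\la V\ra_\Lambda}(t)$ generates a strongly continuous unitary flow, plus condition (R) to handle the time dependence).

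Then, to conclude, I would take $a\in C_c^\infty(T^\ast\T^d)$ with Fourier coefficients in $\Lambda$ and apply the identity \eqref{e:muLtilda}, with the multiplication operator $m_{a\circ\pi_\Lambda}(\sigma)$ playing the role of $K(\sigma)$ (note $m_{a\circ\pi_\Lambda}(\sigma)$ is bounded but not compact, so a routine density/approximation argument extends the trace formula from $\mathcal K$-valued to such multiplication operators, using that $\tilde\rho_\Lambda(t,\cdot)$ is $\mathcal L^1$-valued with total trace mass $\le 1$). Restricting the $(x,\xi)$-variable to $\T^d\times R_\Lambda$ as in the definition of $\mu_\Lambda$ in Theorem~\ref{thm 1st2micro}(iii), and substituting the solution formula for $\tilde\rho_\Lambda(t,\cdot)$ just obtained, gives
\[
\int_{T^\ast\T^d}a(x,\xi)\,\mu_\Lambda(t,dx,d\xi)=\Tr\Big(\int_{\T_{\Lambda^\perp}\times R_\Lambda}U^\Lambda_{\la V\ra_\Lambda}(t)^\ast m_{a\circ\pi_\Lambda}(\sigma)\,U^\Lambda_{\la V\ra_\Lambda}(t)\,\tilde\rho_\Lambda(0,ds,d\sigma)\Big),
\]
which is the claimed formula after moving one propagator across using cyclicity of the trace.

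The main obstacle is the passage from the \emph{weak} (integrated-against-$K$, differentiated-in-$t$) statement of Proposition~\ref{p:average} to the \emph{pointwise-in-$t$} conjugation formula for $\tilde\rho_\Lambda$: one must justify that $t\mapsto\tilde\rho_\Lambda(t,\cdot)\rceil_{R_\Lambda}$ is (a.e. equal to) a genuinely continuous $\mathcal L^1$-valued curve solving the operator ODE, so that the uniqueness/Gr\"onwall argument applies. This requires care because $\tilde\rho_\Lambda$ is a priori only an $L^\infty$-in-$t$ object and because the restriction to $R_\Lambda$ interacts with the $s$-dependence; the geodesic-flow invariance on $R_\Lambda$ (Proposition~\ref{p:weakstarlimit}) and the fact that test functions $K(\sigma)$ independent of $s$ suffice on $R_\Lambda$ — exactly the point exploited already in Lemma~\ref{LemmaFourierInvariant} — is what makes the reduction to a clean fibrewise ODE possible. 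A secondary technical point, the extension of the trace formula to multiplication operators $m_{a\circ\pi_\Lambda}(\sigma)$ which are not compact, is handled by approximating $a$ and using the uniform trace-norm bound on $\tilde\rho_\Lambda$.
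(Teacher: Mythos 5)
The paper states Corollary~\ref{c:propagation} immediately after Proposition~\ref{p:average} without a written proof, the deduction being left implicit; your argument (solve the Heisenberg-type ODE from Proposition~\ref{p:average} to express $\tilde\rho_\Lambda(t,\cdot)\rceil_{R_\Lambda}$ as a conjugate of $\tilde\rho_\Lambda(0,\cdot)\rceil_{R_\Lambda}$ by $U^\Lambda_{\la V\ra_\Lambda}(t)$, then substitute into \eqref{e:muLtilda} restricted to $R_\Lambda$) is precisely that implicit deduction, and your identification of the two technical points that need attention (weak-in-$t$ versus pointwise solution of the ODE, and passing from compact-valued $K$ to the multiplication operators already appearing in \eqref{e:muLtilda}) is apt. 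One small slip: the formula you obtain is already exactly the one claimed, so the final remark about ``moving one propagator across using cyclicity of the trace'' is superfluous.
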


Proposition \ref{p:average} will be a consequence of a more
general propagation law. For fixed $s
\in\mathbb{T}_{\Lambda^{\perp}}$, denote by
$U_{V}^{\Lambda}\left(  t,s\right)  $ ($t\in\R$) the propagator corresponding to
the unitary evolution on $L^{2}\left(  \mathbb{T}_{\Lambda
}\right)  $, starting at $t=0$, generated by
\[
H_{V}^{\Lambda}\left(  t,s\right)
:=-\frac{1}{2}\Delta_{\Lambda}+V\left( t,\pi_{\Lambda}\left(
s,y\right)  \right)  .
\]
Our main goal in this section will be to establish the following
result.
\begin{lemma}\label{l:Cinfty} For all $K$ as in Proposition \ref{p:average},
\[\frac{d}{dt}
  \Tr\int_{\mathbb{T}%
_{\Lambda^{\perp}}\times R_\Lambda}\left(  \sigma\right)
\tilde{\rho}_{\Lambda}\left( t,ds,d\sigma\right)
= i \Tr\int_{\mathbb{T}%
_{\Lambda^{\perp}}\times R_\Lambda}\left[  H_{V}%
^{\Lambda}\left(  t,s\right), K\left(  \sigma\right) \right]
\tilde{\rho}_{\Lambda}\left(  t,ds,d\sigma\right)
\]
(where $\frac{d}{dt}$ is interpreted in distribution sense).


\end{lemma}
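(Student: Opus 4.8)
The plan is to compute the time-derivative of the quantity $\Tr\int_{\mathbb{T}_{\Lambda^{\perp}}\times R_\Lambda}K(\sigma)\,\tilde{\rho}_{\Lambda}(t,ds,d\sigma)$ by going back to the definition of $\tilde{\rho}_{\Lambda}$ as a weak-$\ast$ limit of the operators $n_h^{\Lambda}(t)$ in \eqref{e:nh}, and by differentiating at the level of $n_h^\Lambda$ before passing to the limit $h\to0$. First I would fix a test function $\varphi\in C_c^1(\mathbb{R})$ and a function $s\mapsto K(\sigma)$ as in the statement, and write out $\int_\mathbb{R}\varphi(t)\langle n_h^\Lambda(t),K\rangle\,dt$; here it is convenient to conjugate everything by $T_\Lambda$, so that $T_\Lambda U_V(t)u_h$ evolves according to the operator $T_\Lambda\,\big(-\tfrac12\Delta+V(t,\cdot)\big)\,T_\Lambda^\ast$ acting on $L^2(\mathbb{T}_{\Lambda^\perp};L^2(\mathbb{T}_\Lambda))$. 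Since $T_\Lambda$ intertwines $-\tfrac12\Delta$ on $\mathbb{T}^d$ with $-\tfrac12\Delta_{\Lambda^\perp}-\tfrac12\Delta_\Lambda$ (in the $(s,y)$ coordinates), and $V(t,x)$ becomes the multiplication operator by $V(t,\pi_\Lambda(s,y))$, the Heisenberg equation for $\langle n_h^\Lambda(t),K\rangle$ produces a commutator term $\tfrac{i}{h}\langle \cdots,[\,-\tfrac12\Delta_{\Lambda^\perp},K(hD_s)]\,\rangle$ coming from the $s$-Laplacian, a term with $[\,-\tfrac12\Delta_\Lambda,K(hD_s)]=[\,-\tfrac12\Delta_\Lambda,K\,](hD_s)$ coming from the $y$-Laplacian (recall $K(\sigma)$ acts on $L^2(\mathbb{T}_\Lambda)$), and a potential term $i\langle\cdots,[V(t,\pi_\Lambda(s,y)),K(hD_s)]\,\rangle$.

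The next step is to take $h\to0^+$ in each of these three contributions. The term involving $-\tfrac12\Delta_{\Lambda^\perp}$ is a transport term in the $(s,\sigma)$ variables with the prefactor $1/h$; because $K$ does not depend on $s$, the symbol $K(\sigma)$ Poisson-commutes with the symbol $\tfrac12|\sigma|^2$ of $-\tfrac12\Delta_{\Lambda^\perp}$, so this term vanishes in the limit — this is exactly the mechanism already used in Lemma \ref{Lemma Inv} and in Proposition \ref{p:weakstarlimit} to obtain the $\phi_\tau$-invariance of $\tilde{\rho}_\Lambda$. The term with $-\tfrac12\Delta_\Lambda$ converges, by the weak-$\ast$ convergence $n_h^\Lambda\to\tilde{\rho}_\Lambda$ applied to the compactly-supported (in $\sigma$) operator-valued symbol $[\,-\tfrac12\Delta_\Lambda,K\,](\sigma)$, to $\Tr\int [\,-\tfrac12\Delta_\Lambda,K(\sigma)]\,\tilde{\rho}_\Lambda(t,ds,d\sigma)$. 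The potential term is the delicate one: one must show that $\Op_h$ (in the $s$-variable) of $V(t,\pi_\Lambda(s,y))$ converges, against $\tilde{\rho}_\Lambda$, to multiplication by the symbol $V(t,\pi_\Lambda(s,y))$, so that the commutator converges to $\Tr\int [V(t,\pi_\Lambda(s,y)),K(\sigma)]\,\tilde{\rho}_\Lambda(t,ds,d\sigma)$; assembling the pieces and recalling that $H_V^\Lambda(t,s)=-\tfrac12\Delta_\Lambda+V(t,\pi_\Lambda(s,\cdot))$, we get precisely the claimed identity, interpreted distributionally in $t$ after integration against $\varphi$.

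The main obstacle is the low regularity of the potential: $V\in L^\infty$ satisfying only condition (R), so $V(t,\pi_\Lambda(s,y))$ is not continuous and one cannot directly invoke standard semiclassical symbolic calculus to pass $\Op_h$ of the multiplication operator to its symbol. The plan here is the now-standard approximation argument: given $\varepsilon>0$, use (R) to split $V=V_\varepsilon+(V-V_\varepsilon)$ with $V_\varepsilon$ continuous and $|V-V_\varepsilon|\le\varepsilon$ off a compact set $K_\varepsilon$ of measure $<\varepsilon$; handle the commutator with $V_\varepsilon$ by symbolic calculus and continuity, bound the contribution of the small-sup-norm part by $O(\varepsilon)$ uniformly in $h$ (using the Calderón–Vaillancourt bound on $\|[V-V_\varepsilon,K(hD_s)]\|$, as in the estimate of $\langle\mathcal{L}_V^h(t),\cdot\rangle$ in Lemma \ref{Lemma Inv}), and control the contribution near $K_\varepsilon$ by combining the a priori bound $\int\Tr\,\tilde\rho_\Lambda\le1$ with the fact that the Wigner transforms do not charge a set of small measure in $t$ — here one integrates in $t$ against $\varphi$ and uses that $t\mapsto\tilde\rho_\Lambda(t,\cdot)$ is an $L^\infty$ family of bounded measures, so the $K_\varepsilon$-part is $O(\varepsilon)$ as well. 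Letting $\varepsilon\to0$ then closes the argument. One should also take a little care that all the manipulations with $\frac{d}{dt}$ are justified in the sense of distributions in $t$, which is why everything is done after pairing with $\varphi\in C_c^1(\mathbb{R})$ and integrating by parts in $t$, exactly as in the proof of Lemma \ref{Lemma Inv}.
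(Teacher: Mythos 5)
Your overall strategy — differentiate at the level of $n_h^\Lambda$ via the conjugated Heisenberg equation, split the generator as $-\tfrac12\Delta_{\Lambda^\perp}-\tfrac12\Delta_\Lambda+V\circ\pi_\Lambda$, drop the $\Delta_{\Lambda^\perp}$-commutator because $K$ is independent of $s$, pass $h\to0$ in the remaining two terms, and handle the low regularity of $V$ by successive approximation (smooth $\to$ continuous $\to$ (R)) — matches the paper's proof in outline. Two comments, one minor and one substantive.

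Minor: the paper observes that $[\Delta_{\Lambda^\perp},K(hD_s)]=0$ \emph{exactly}, since both operators are Fourier multipliers in the $s$-variable. Your phrasing "Poisson-commutes$\ldots$so this term vanishes in the limit" is not wrong, but it is worth noting that there is no $O(h)$ residue to chase at all: the $1/h$-weighted term is identically zero, which is why Lemma \ref{LemmaFinalEgorov} is an exact identity for each $h$ and not an asymptotic one.

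Substantive: your proposed mechanism for controlling the contribution of the exceptional set $K_\varepsilon$ is not strong enough. You suggest combining the a priori trace bound $\int\Tr\,\tilde\rho_\Lambda\le1$ with "the fact that the Wigner transforms do not charge a set of small measure in $t$." But $K_\varepsilon$ is a set of small \emph{Lebesgue} measure in $(t,x)$, and a total mass bound on $\tilde\rho_\Lambda$ gives no control on how much of that mass sits over a Lebesgue-null (or small) set in $x$. A priori the $x$-marginal of the semiclassical measure could be a Dirac, in which case the contribution supported over $K_\varepsilon$ would be $O(1)$, not $o(1)$, regardless of the trace bound. The missing ingredient — and the one the paper makes central — is the \emph{already-proved} absolute continuity of the limit measures in $x$ (Theorem \ref{t:main}(i), Corollary \ref{t:example}, established in \S\ref{s:successive} before this lemma, so there is no circularity). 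Concretely, after Cauchy–Schwarz in $t$, the $K_\varepsilon$-error reduces to $\int_0^T|\theta(t)|\,\|(1-\chi_\varepsilon)u_h(t)\|_{L^2}^2\,dt$, whose $h\to0$ limit is $\int_0^T\int_{\T^d}|\theta(t)|\,|1-\chi_\varepsilon(t,x)|^2\,\nu_t(dx)\,dt$ with $\nu_t$ absolutely continuous, and \emph{only} absolute continuity makes this quantity small together with the Lebesgue measure of the set supporting $1-\chi_\varepsilon$. The same absolute continuity is used once more after passing to the limit, to remove the residual $\chi_\varepsilon$ inside the trace integral. Your plan as written omits this ingredient, and the substitute you propose would fail on singular $\nu_t$; you should flag explicitly that the whole point of proving Theorem \ref{t:main}(i) before this propagation lemma is to have absolute continuity available here.
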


That Proposition \ref{p:average} follows from Lemma
 \ref{l:Cinfty} is a consequence of the invariance of
$\tilde{\rho}_{\Lambda}\left( t,\cdot\right)  $ with respect to
the geodesic flow.

\begin{proof}[Proof that Lemma \ref{l:Cinfty} implies Proposition
\ref{p:average}]
Assume that Lemma \ref{l:Cinfty} holds.
 Since $\tilde{\rho}_{\Lambda}\left(  t,\cdot\right)  $ is invariant by $s\mapsto s+\tau\sigma$ ($\tau\in\R$), it follows from Lemma \ref{LemmaFourierInvariant} that
$\tilde{\rho}_{\Lambda}\left(  t,\cdot\right)  \rceil_{\mathbb{T}%
_{\Lambda^{\perp}}\times R_{\Lambda}}$ is invariant by all
translations $s\mapsto s+v$ with $v\in\Lambda^{\perp}$. Therefore,
\[
\tilde{\rho}_{\Lambda}\left(  t,\cdot\right) \rceil_{\mathbb{T}%
_{\Lambda^{\perp}}\times R_{\Lambda}}  =ds\otimes\int_{\T_{\Lambda^{\perp}}%
}\tilde{\rho}_{\Lambda}\left(  t,ds,\cdot\right)
\rceil_{R_{\Lambda}} .
\]
As
\[
\int_{\T_{\Lambda^{\perp}}}H_{V}^{\Lambda}\left(  t,s\right)  ds=-\frac{1}{2}%
\Delta_{\Lambda}+\int_{\T_{\Lambda^{\perp}}}V\left(
t,\pi_{\Lambda}\left( s,y\right)  \right)  ds=H_{\left\langle
V\right\rangle _{\Lambda}}^{\Lambda }\left(  t\right)  ,
\]
the result follows.
\end{proof}
Next we shall prove Lemma \ref{l:Cinfty}, first in the smooth
case, then for continuous potentials and finally for potentials
that satisfy assumption (R).

\subsection{The case of a $C^\infty$ potential}
Here we shall assume that $V \in
C^{\infty}(\mathbb{R}\times\mathbb{T}^{d})$. The restriction of
$n_{h}^{\Lambda}\left( t\right) $ to the class of test functions
that do not depend on $s\in\mathbb{T}_{\Lambda^{\perp}}$ satisfies
a certain propagation law, that we now describe. This generalizes
statement (ii) in Theorem 2 of \cite{MaciaAv}.

\begin{lemma}\label{LemmaFinalEgorov}
If $K\in C_{c}^{\infty}\left(  \Lambda^{\perp};\mathcal{K}\left(  L^{2}\left(
\mathbb{T}_{\Lambda}\right)  \right)  \right)  $ is a function that does not
depend on $s$ then%
\begin{equation}\label{e:derivative}\frac{d}{dt}
\left\langle n_{h}^{\Lambda}\left(  t\right)  ,K\right\rangle = i\left\langle T_{\Lambda}u_{h},\left[  H_{V}%
^{\Lambda}\left(  t,\cdot\right), K\left(  hD_{s}\right)  \right] T_{\Lambda}u_{h}\right\rangle _{L^{2}\left(  \mathbb{T}%
_{\Lambda^{\perp}};L^{2}\left(  \mathbb{T}_{\Lambda}\right)  \right)  }.
\end{equation}

\end{lemma}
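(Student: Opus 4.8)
The plan is to differentiate the quantity $\langle n_h^\Lambda(t), K\rangle = \langle T_\Lambda U_V(t) u_h, K(hD_s) T_\Lambda U_V(t)u_h\rangle$ directly in $t$, using the Schr\"odinger equation satisfied by $u_h(t) = U_V(t) u_h$. First I would record that $u_h(t)$ solves $i h \partial_t u_h(t) = (-\tfrac12 h^2 \Delta + h^2 V(t,\cdot)) u_h(t)$ — or rather, since we are working at the level of the semiclassical Wigner transform, I should be careful: the evolution here is the \emph{physical} Schr\"odinger equation $i\partial_t u_h = (-\tfrac12\Delta + V)u_h$, and the semiclassical parameter $h$ enters only through the quantization scale. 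So the relevant commutator identity is the one appearing in \eqref{e:comm}, adapted to the operator-valued symbol $K(hD_s)$ transported through the isometry $T_\Lambda$. Thus $\tfrac{d}{dt}\langle n_h^\Lambda(t), K\rangle = i\langle T_\Lambda u_h(t), [T_\Lambda(-\tfrac12\Delta + V(t,\cdot))T_\Lambda^*,\, K(hD_s)]\, T_\Lambda u_h(t)\rangle$, using that $T_\Lambda U_V(t) = \widetilde U_V(t) T_\Lambda$ where $\widetilde U_V(t)$ is the conjugated propagator, and that conjugation by $T_\Lambda$ turns $-\tfrac12\Delta + V(t,\cdot)$ into its pushforward on $\mathbb{T}_{\Lambda^\perp}\times\mathbb{T}_\Lambda$.

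The heart of the matter is then to identify this commutator. Under the coordinate change $\chi_\Lambda$, the Laplacian splits as $\Delta = \Delta_{\Lambda^\perp} + \Delta_\Lambda$ (the metric is flat and the decomposition $\mathbb{R}^d = \Lambda^\perp \oplus \langle\Lambda\rangle$ is orthogonal), so that $T_\Lambda(-\tfrac12\Delta)T_\Lambda^* = -\tfrac12\Delta_{\Lambda^\perp} - \tfrac12\Delta_\Lambda$. Now $K(hD_s)$ acts only in the $s$-variables (as a Fourier multiplier) and tensorially as the operator $K$ in the $y$-variables; crucially $K(hD_s)$ commutes with $-\tfrac12\Delta_{\Lambda^\perp}$ (both are functions of $D_s$) and with $-\tfrac12\Delta_\Lambda$ only up to the internal commutator $[-\tfrac12\Delta_\Lambda, K(hD_s)]$. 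Wait — I should be more careful: $K = K(\sigma)$ is \emph{already} an operator on $L^2(\mathbb{T}_\Lambda)$ depending on $\sigma \in \Lambda^\perp$, and $K(hD_s)$ means we substitute $\sigma = hD_s$. The term $[-\tfrac12\Delta_\Lambda, K(hD_s)]$ is then the operator $[-\tfrac12\Delta_\Lambda, K(\sigma)]$ with $\sigma = hD_s$ substituted, which is precisely the $\Delta_\Lambda$-part of $[H_V^\Lambda(t,s), K(hD_s)]$. So the Laplacian contributes exactly the $-\tfrac12\Delta_\Lambda$ piece of the claimed commutator, and the key point is that the potentially dangerous term $[-\tfrac12\Delta_{\Lambda^\perp}, K(hD_s)]$ vanishes identically.

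It remains to handle the potential term: $T_\Lambda V(t,\cdot) T_\Lambda^* = m_{V\circ\pi_\Lambda}$, multiplication by $V(t,\pi_\Lambda(s,y))$, which depends on both $s$ and $y$. Here one cannot simply commute things past $K(hD_s)$, and this is where I expect the main technical work. The cleanest route is to write $m_{V\circ\pi_\Lambda}$ via its $s$-Fourier expansion $V(t,\pi_\Lambda(s,y)) = \sum_{\ell} \widehat V(t)_\ell\, e^{i\ell\cdot s}\, (\text{something in } y)$ — more precisely expand in the $\Lambda^\perp$-Fourier modes — and observe that conjugating the Fourier multiplier $K(hD_s)$ by $e^{i\ell\cdot s}$ shifts its argument by $h\ell$, producing an $O(h)$ error by smoothness of $\sigma\mapsto K(\sigma)$ plus Calder\'on–Vaillancourt bounds on the remainder. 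Taking $h\to 0$, the leading term reproduces $[V(t,\pi_\Lambda(s,y)), K(hD_s)]$ to leading order, i.e. the potential part of $[H_V^\Lambda(t,s), K(hD_s)]$; in fact, since $V(t,\cdot)$ is a multiplication operator and at leading semiclassical order $K(hD_s)$ behaves classically in $s$, the identity holds exactly in the $h\to 0$ limit after integrating against $\varphi\in C_c^1(\mathbb{R})$, which is the sense in which $\tfrac{d}{dt}$ is interpreted. The main obstacle is thus bookkeeping the non-commutativity of $V\circ\pi_\Lambda$ (which mixes $s$ and $y$) with the $s$-multiplier $K(hD_s)$, and checking that the commutator reorganizes precisely into $[H_V^\Lambda(t,s), K(hD_s)]$ with no leftover terms; the smoothness of $V$ is used exactly here to control the symbol-calculus remainders, and the subsequent subsections will weaken this to continuous and then (R)-potentials by approximation.
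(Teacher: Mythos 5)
Your overall strategy is right, and the key observations you make are exactly the paper's: the conjugated Laplacian splits as $T_\Lambda \Delta T_\Lambda^* = \Delta_\Lambda + \Delta_{\Lambda^\perp}$, the commutator $[\Delta_{\Lambda^\perp}, K(hD_s)]$ vanishes because both operators are Fourier multipliers in $s$, and the conjugated potential is multiplication by $V\circ\pi_\Lambda$. But you have misread what the lemma claims, and this sends you off on an unnecessary tangent. The identity \eqref{e:derivative} is an \emph{exact} identity in $t$, for every fixed $h>0$; there is no semiclassical limit in Lemma \ref{LemmaFinalEgorov}. Differentiating $\langle T_\Lambda u_h(t), K(hD_s)T_\Lambda u_h(t)\rangle$, using the Schr\"odinger equation and the conjugation by $T_\Lambda$, gives $i\langle T_\Lambda u_h(t), [T_\Lambda(-\tfrac12\Delta+V)T_\Lambda^*, K(hD_s)]T_\Lambda u_h(t)\rangle$; after discarding $[\Delta_{\Lambda^\perp}, K(hD_s)]=0$, what remains is $[-\tfrac12\Delta_\Lambda + V(t,\pi_\Lambda(s,y)), K(hD_s)] = [H_V^\Lambda(t,s), K(hD_s)]$, which is what appears, verbatim and unsimplified, on the right-hand side of \eqref{e:derivative}. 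That is the whole proof.

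Consequently your entire discussion of the potential term --- expanding $V\circ\pi_\Lambda$ in its $s$-Fourier series, arguing that conjugating $K(hD_s)$ by $e^{i\ell\cdot s}$ shifts its argument by $h\ell$ with an $O(h)$ error, and concluding that ``the identity holds exactly in the $h\to0$ limit after integrating against $\varphi$'' --- is beside the point. The commutator $[V(t,\pi_\Lambda(s,y)), K(hD_s)]$ is not simplified or expanded in this lemma; it is carried as-is into the right-hand side. No symbol calculus and no smoothness of $V$ are needed for Lemma \ref{LemmaFinalEgorov} itself (indeed the paper notes in \S\ref{s:continuous} that the lemma still holds verbatim for merely continuous $V$). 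The smoothness, and subsequently continuity and assumption (R), only enter when one passes to the limit $h\to 0$ and averages over $s\in\mathbb{T}_{\Lambda^\perp}$ to go from Lemma \ref{LemmaFinalEgorov} to Lemma \ref{l:Cinfty} and Proposition \ref{p:average} --- that is where the real work, and the kind of bookkeeping you describe, actually takes place.
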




\begin{proof}
 It is simple to check that
(\ref{ChangeCoord}) gives:%
\[
T_{\Lambda}\Delta T_{\Lambda}^{\ast}=\Delta_{\Lambda}+\Delta_{\Lambda^{\perp}%
}.
\]%

Moreover, it is clear that:%
\[
\left[  \Delta_{\Lambda^{\perp}},K\left(  hD_{s}\right)   \right]  =0.
\]
Therefore, equation (\ref{e:nh}), in the case when $K$ does not depend on $s$, gives \eqref{e:derivative}.%
\end{proof}

Taking limits in equation \eqref{e:derivative} and taking into
account that we can restrict  $\tilde\rho_\Lambda$ to $(s,
\sigma)\in\mathbb{T}_{\Lambda^{\perp}}\times R_\Lambda$ (since
 it is a positive measure), concludes the proof of Lemma
\ref{l:Cinfty} in this case.

\subsection{The case of a continuous potential\label{s:continuous}}
In this section, we assume that $V\in C(\R\times\T^d)$. In this
case, Lemma \ref{LemmaFinalEgorov} still holds, but we cannot
obtain \ref{l:Cinfty} by simply taking limits. Instead, we
shall use an elementary approximation argument.

We introduce a sequence $V_n$ of $C^\infty$ potentials, such that
$$\norm{V-V_n}_{L^\infty}\leq \frac1n.$$
We rewrite equation \eqref{e:derivative},

\begin{multline*} \frac{d}{dt}
\left\langle n_{h}^{\Lambda}\left(  t\right)  ,K\right\rangle =
i\left\langle T_{\Lambda}u_{h}(t),\left[  H_{V_n}%
^{\Lambda}\left(  t,\cdot\right), K\left(  hD_{s}\right)  \right]
T_{\Lambda}u_{h}(t)\right\rangle  \\+ i\left\langle
T_{\Lambda}u_{h}(t),\left[  V-V_n, K\left(  hD_{s}\right)  \right]
T_{\Lambda}u_{h}(t)\right\rangle.
\end{multline*}
We use the inequality
\[
\left|\left\langle T_{\Lambda}u_{h},\left[  V-V_n, K\left(  hD_{s}\right)  \right] T_{\Lambda}u_{h}\right\rangle\right|\leq 2\norm{V-V_n}_{L^\infty}\sup_{\sigma\in \Lambda^\bot}\norm{K(\sigma)}
\]
to estimate the error when replacing $V$ by $V_n$.

In the limit $h\To 0$,
\[
\left\langle T_{\Lambda}u_{h},\left[  H_{V_n}%
^{\Lambda}\left(  t,\cdot\right), K\left(  hD_{s}\right)  \right] T_{\Lambda}u_{h}\right\rangle
\To
\Tr\int_{T^{\ast}\mathbb{T}_{\Lambda^{\perp}}%
}\left[  H_{ V_n}%
^{\Lambda}\left(  t,\cdot\right), K\left(  \sigma\right) \right] \tilde{\rho}_{\Lambda}\left(  t,ds,d\sigma\right)
\]
since $V_n$ is smooth.
We use again the inequality
\[\left|
\Tr\int_{T^{\ast}\mathbb{T}_{\Lambda^{\perp}}%
}\left[ V-V_n
 , K\left(  \sigma\right) \right] \tilde{\rho}_{\Lambda}\left(  t,ds,d\sigma\right)
\right|\leq 2
\norm{V-V_n}_{L^\infty}\sup_{\sigma\in \Lambda^\bot}\norm{K(\sigma)}
\]
to estimate the error when replacing $V_n$ by $V$.

Letting $h\To 0$ and then $n\To +\infty$, we find that
\[\frac{d}{dt}
  \Tr\int_{T^{\ast}\mathbb{T}_{\Lambda^{\perp}}%
}K\left(  \sigma\right)  \tilde{\rho}_{\Lambda}\left(  t,ds,d\sigma\right)
= i \Tr\int_{T^{\ast}\mathbb{T}_{\Lambda^{\perp}}%
}\left[  H_{V}%
^{\Lambda}\left(  t,s\right), K\left(  \sigma\right) \right]
\tilde{\rho}_{\Lambda}\left(  t,ds,d\sigma\right)
\]
where $\frac{d}{dt}$ is meant in the distribution sense.

Again, we can restrict $\tilde\rho_\Lambda$ to $(s,
\sigma)\in\mathbb{T}_{\Lambda^{\perp}}\times R_\Lambda$ since
 it is a positive measure.
This concludes the proof of Lemma \ref{l:Cinfty} in the continuous
case.

\subsection{Case of an $L^\infty$ potential\label{s:linfty}}
Let us turn to the case of a potential $V$ that satisfies
condition (R) of the introduction. We use again an approximation
argument, but we have to use the fact that we already know that
the limit measures are absolutely continuous.

It is enough to consider the restriction of $n^\Lambda_h(t)$ to $t\in[0, T]$, for any arbitrary $T$.
For any $\epsilon >0$, we then consider the set $K_\epsilon$ and the function $V_\epsilon$
described in Assumption (R).
Consider an open set $W_{2\eps}$ of Lebesgue measure $<2\eps$ such that $K_\eps\subset W_{2\eps}$. Let us introduce a continuous function $\chi_\eps$ taking values in $[0, 1]$, and which takes the value $1$ on the complement of $W_{2\eps}$ and $0$ on $K_\eps$ (this is where we use the fact that $K_\eps$ is closed).

Lemma \ref{LemmaFinalEgorov} still holds.
We use it to write
\begin{multline}\label{e:deco} \frac{d}{dt}
\left\langle n_{h}^{\Lambda}\left(  t\right)  ,K\right\rangle =
i\left\langle T_{\Lambda}u_{h}(t),\left[  H_{\chi_\eps V_\eps}%
^{\Lambda}\left(  t,\cdot\right), K\left(  hD_{s}\right)  \right] T_{\Lambda}u_{h}(t)\right\rangle
\\+ i\left\langle T_{\Lambda}u_{h}(t),\left[ \chi_\eps(t)\left(V(t)-V_\eps(t)\right), K\left(  hD_{s}\right)  \right] T_{\Lambda}u_{h}(t)\right\rangle
 \\+ i\left\langle T_{\Lambda}u_{h}(t),\left[  V(1-\chi_\eps)(t), K\left(  hD_{s}\right)  \right] T_{\Lambda}u_{h}(t)\right\rangle.
\end{multline}

Arguing as in \S \ref{s:continuous}, we see that
$$\left\langle T_{\Lambda}u_{h},\left[  H_{\chi_\eps V_\eps}%
^{\Lambda}\left(  t,\cdot\right), K\left(  hD_{s}\right)  \right] T_{\Lambda}u_{h}\right\rangle$$
converges to
\begin{equation}\label{e:limit}\Tr\int_{T^{\ast}\mathbb{T}_{\Lambda^{\perp}}%
}\left[  H_{\chi_\eps V_\eps}%
^{\Lambda}\left(  t,\cdot\right), K\left(  \sigma\right) \right] \tilde{\rho}_{\Lambda}\left(  t,ds,d\sigma\right)
\end{equation}
in the limit $h\To 0$, since $\chi_\eps V_\eps$ is continuous.
Note that we can replace $V_\eps$ by $V$ in this limiting term
\eqref{e:limit}, up to an error of $2\eps
\sup_{\sigma\in\Lambda^\perp}\norm{K(\sigma)}$. Analogously, we
are going to show that in the limit $h\To 0$ the remaining error
terms give a contribution that vanishes as $\epsilon$ tends to
zero. In other words, we are going to show that the following
equation holds,
\begin{equation}\label{e:limit2}
\frac{d}{dt}
  \Tr\int_{T^{\ast}\mathbb{T}_{\Lambda^{\perp}}%
}K\left(  \sigma\right)  \tilde{\rho}_{\Lambda}\left(
t,ds,d\sigma\right)
= i \Tr\int_{T^{\ast}\mathbb{T}_{\Lambda^{\perp}}%
}\left[  H_{\chi_\eps V}%
^{\Lambda}\left(  t,s\right), K\left(  \sigma\right) \right]
\tilde{\rho}_{\Lambda}\left(  t,ds,d\sigma\right)+\sup_{\sigma\in\Lambda^\perp}\norm{K(\sigma)}R_{\epsilon},
\end{equation}
where $R_{\epsilon}$ does not depend on $K$, and goes to $0$ as $\epsilon \To 0$. To do so, we
estimate the error terms involved.

The term $\left|\left\langle T_{\Lambda}u_{h}(t),\left[
\chi_\eps(V-V_\eps), K\left(  hD_{s}\right)  \right]
T_{\Lambda}u_{h}(t)\right\rangle \right|$ is easily seen to be
bounded from above by $2\eps
\sup_{\sigma\in\Lambda^\perp}\norm{K(\sigma)}$.

We now turn to the error term involving $V(1-\chi_\eps)$ in \eqref{e:deco}. We use
the fact that this function is supported on a set of small
measure, and that we know that the limit measures are absolutely
continuous. We deal with the first term in the commutator, the
second one may be treated analogously. Clearly
$$\left|\left\langle T_{\Lambda}u_{h}(t),  V(1-\chi_\eps) K\left(  hD_{s}\right)  T_{\Lambda}u_{h}(t)\right\rangle\right| \leq
\norm{V}_{L^\infty}\sup_{\sigma\in\Lambda^\bot}\norm{K(\sigma)}
\norm{ u_{h}(t)} \norm{(1-\chi_\eps) u_{h}(t)} .$$ Integrating
along an $L^1$ function $\theta(t)$,
\begin{multline*}\left|\int_0^T \theta(t)\left\langle T_{\Lambda}u_{h}(t),  V(1-\chi_\eps) K\left(  hD_{s}\right)  T_{\Lambda}u_{h}(t)\right\rangle dt\right|\\
\leq
\norm{V}_{L^\infty}\sup_{\sigma\in\Lambda^\bot}\norm{K(\sigma)}
\int_0^T |\theta(t)|\norm{ u_{h}(t)}
\norm{(1-\chi_\eps) u_{h}(t)} dt\\
\leq
\norm{V}_{L^\infty}\sup_{\sigma\in\Lambda^\bot}\norm{K(\sigma)}
\left(\int_0^T |\theta(t)|\norm{ u_{h}(t)}^2dt
\right)^{1/2}\left(\int_0^T|\theta(t)|
\norm{(1-\chi_\eps) u_{h}(t)}^2 dt\right)^{1/2}\\
=\norm{V}_{L^\infty}\sup_{\sigma\in\Lambda^\bot}\norm{K(\sigma)}
\left(\int_0^T |\theta(t)| dt
\right)^{1/2}\left(\int_0^T|\theta(t)| \norm{(1-\chi_\eps)
u_{h}(t)}^2 dt\right)^{1/2}
\end{multline*}
By Corollary \ref{t:example} we know that $\int_0^T |\theta(t)|
\norm{(1-\chi_\eps) u_{h}(t)}^2 dt$ converges as $h\To 0$ (along a
subsequence) to
$$\int_0^T \int_{\mathbb{T}^d} |\theta(t)||1-\chi_\eps(t, x)|^2 \nu_t(dx)dt$$
where $\nu_t$ is an absolutely continuous probability measure on
$\T^d$. The function $|1-\chi_\eps(t, x)|$ takes values in $[0,
1]$ and is supported in $W_{2\eps}$, of measure $<2\eps$. Thus,
$$\int_0^T \int_{\mathbb{T}^d} |\theta(t)||1-\chi_\eps(t, x)|^2 \nu_t(dx)dt\To 0$$
as $\eps\To 0$.

Equation (\ref{e:limit2}) is now proved. Restricting  $\tilde\rho_\Lambda$ to $(s,
\sigma)\in\mathbb{T}_{\Lambda^{\perp}}\times R_\Lambda$, it follows that
\begin{equation}\label{e:limit3}
\frac{d}{dt}
  \Tr\int_{\mathbb{T}_{\Lambda^{\perp}}\times R_{\Lambda}}K\left(  \sigma\right)  \tilde{\rho}_{\Lambda}\left(
t,ds,d\sigma\right)
= i \Tr\int_{\mathbb{T}_{\Lambda^{\perp}}\times R_{\Lambda}
}\left[  H_{\chi_\eps V}%
^{\Lambda}\left(  t,s\right), K\left(  \sigma\right) \right]
\tilde{\rho}_{\Lambda}\left(  t,ds,d\sigma\right)+\sup_{\sigma\in\Lambda^\perp}\norm{K(\sigma)}R_{\epsilon},
\end{equation}
There remains to show how to conclude Lemma \ref{l:Cinfty} from
equation (\ref{e:limit3}).  To do so, we prove that
\begin{equation} \Tr\int_{\mathbb{T}%
_{\Lambda^{\perp}}\times R_{\Lambda}}\left[  H_{\chi_\eps V}%
^{\Lambda}\left(  t,\cdot\right), K\left(  \sigma\right) \right]
\tilde{\rho}_{\Lambda}\left(  t,ds,d\sigma\right)
\end{equation}
is the same as
\begin{equation} \Tr\int_{\mathbb{T}_{\Lambda^{\perp}}\times R_{\Lambda}}%
\left[  H_{ V}%
^{\Lambda}\left(  t,\cdot\right), K\left(  \sigma\right) \right]
\tilde{\rho}_{\Lambda}\left(  t,ds,d\sigma\right)
\end{equation}
up to an error which goes to $0$ with $\epsilon$.
The difference between both is
\begin{multline*} \Tr\int_{\mathbb{T}%
_{\Lambda^{\perp}}\times R_{\Lambda}}%
\left[ V(1-\chi_\eps)(t), K\left(  \sigma\right) \right]
\tilde{\rho}_{\Lambda}\left(  t,ds,d\sigma\right) =
 \Tr\int_{\mathbb{T}_{\Lambda^{\perp}}\times R_{\Lambda}}  V(1-\chi_\eps) (t)K\left(  \sigma\right)  \tilde{\rho}_{\Lambda}\left(  t,ds,d\sigma\right) \\ -
 \Tr\int_{\mathbb{T}%
_{\Lambda^{\perp}}\times R_{\Lambda}}%
  K\left(  \sigma\right) V(1-\chi_\eps)
(t)\tilde{\rho}_{\Lambda}\left(  t,ds,d\sigma\right).
\end{multline*}
 Let us consider for instance
 \begin{equation}\label{e:traceepsilon} \Tr\int_{\mathbb{T}%
_{\Lambda^{\perp}}\times R_{\Lambda} } V(1-\chi_\eps) (t)K\left(
\sigma\right) \tilde{\rho}_{\Lambda}\left( t,ds,d\sigma\right).
\end{equation}
For any $\theta\in L^1(\R)$, the measure
$$a\in C([0, T]\times \T^d)\mapsto \int_0^T\theta(t)\Tr\int_{\mathbb{T}%
_{\Lambda^{\perp}}\times R_{\Lambda} }  m_a K\left(  \sigma\right)
\tilde{\rho}_{\Lambda}\left( t,ds,d\sigma\right)dt $$ is absolutely continuous, therefore
$$\int_0^T\theta(t) \Tr\int_{\mathbb{T}%
_{\Lambda^{\perp}}\times R_{\Lambda} } V(1-\chi_\eps) (t)K\left(
\sigma\right) \tilde{\rho}_{\Lambda}\left(
t,ds,d\sigma\right)dt$$ goes to $0$ when $\epsilon\To 0$.

This finishes the proof of Lemma \ref{l:Cinfty}.

\begin{remark} The same argument applies to show that the operator-valued measure
$$\tilde\rho_{\Lambda_{l}}^{\Lambda_{1}\Lambda_{2}\ldots\Lambda_{l-1}}\left(t,
ds, d\sigma, d\eta_{1},\ldots, d\eta_{l}\right)$$
appearing in \eqref{e:fullintrinsic} satisfies the propagation law analogous to Proposition \ref{p:average}
\begin{align*}
&\frac{d}{dt}
  \Tr\int_{T^*\mathbb{T}%
_{{\Lambda^{\perp}_l}} \times R_{\Lambda_{2}}(\Lambda_{1})\times\ldots\times
R_{\Lambda_{l}}(\Lambda
_{l-1}) }K\left(  \sigma\right)
\tilde\rho_{\Lambda_{l}}^{\Lambda_{1}\Lambda_{2}\ldots\Lambda_{l-1}}\left( t,ds,d\sigma, d\eta_1,\ldots, d\eta_{l-1}\right)
\\& = i \Tr\int_{T^*\mathbb{T}%
_{{\Lambda^{\perp}_l}} \times R_{\Lambda_{2}}(\Lambda_{1})\times\ldots\times
R_{\Lambda_{l}}(\Lambda
_{l-1}) }%
\left[  H_{\la V\ra_{\Lambda_l}}%
^{\Lambda_l}\left(  t,\cdot\right), K\left(  \sigma\right) \right]
\tilde\rho_{\Lambda_{l}}^{\Lambda_{1}\Lambda_{2}\ldots\Lambda_{l-1}}\left( t,ds,d\sigma, d\eta_1,\ldots, d\eta_{l-1}\right).
\end{align*}

\end{remark}

\subsection{End of proof of Theorem \ref{t:precise}} To end the
proof of Theorem \ref{t:precise}, we let
\[
\nu_{\Lambda}(t,\cdot)=\sum_{0\leq k\leq
d-1}\sum_{\Lambda_{1}\supset
\Lambda_{2}\supset\cdots\supset\Lambda_{k}\supset\Lambda}\int_{\IR^{d}}%
\mu_{\Lambda}^{\Lambda_{1}\Lambda_{2}\ldots\Lambda_{k}}(t,\cdot,d\xi),
\]
where $\Lambda_{1},\ldots,\Lambda_{k}$ run over the set of
strictly decreasing sequences of submodules, such that
$\Lambda_{k}\subset\Lambda$. We also let
\[
\sigma_{\Lambda}=\sum_{0\leq k\leq
d-1}:\sum_{\Lambda_{1}\supset\Lambda
_{2}\supset\cdots\supset\Lambda_{k}\supset\Lambda}\int_{\T^d\times R_{\Lambda_1}\times R_{\Lambda_{2}}(\Lambda_{1})\times\ldots\times R_{\Lambda%
}(\Lambda_{k})\times\la \Lambda\ra}\tilde{\rho}_{\Lambda
}^{\Lambda_{1}\Lambda_{2}\ldots\Lambda_{k}}\left(0,ds, d\sigma,d\eta_{1},\ldots ,d\eta_{k}, d\eta\right)  ,
\]
where the
$\tilde{\rho}_{\Lambda}^{\Lambda_{1}\Lambda_{2}\ldots\Lambda_{k}}$
are the operator-valued measures appearing in \eqref{e:fullintrinsic}.

\section{Propagation of $\bar{\mu}$ and end of the proof of Theorem
\ref{t:main}\label{endthm1}}

We have already proved statement (i) of Theorem \ref{t:main}; we shall now
concentrate on (ii). We shall need a preliminary result, which is of
independent interest, that describes the propagation of $\bar{\mu},$ the
projection of $\mu$ onto the variable $\xi\in\mathbb{R}^{d}$.

\begin{proposition}
\label{p:muc}Suppose that $\mu_{0}\in\mathcal{M}_{+}\left(  T^{\ast}%
\mathbb{T}^{d}\right)  $ is a semiclassical measure of $\left(  u_{h}\right)
$. Then $\bar{\mu}$ is constant for a.e. $t$ and,%
\begin{equation}
\bar{\mu}=\int_{\mathbb{T}^{d}}\mu_{0}\left(  dy,\cdot\right)
.\label{e:margm0}%
\end{equation}

\end{proposition}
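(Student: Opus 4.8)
The plan is to test the Wigner distributions against symbols $a=a(\xi)$ that depend on the frequency variable alone, viewed as $x$-independent elements of $C_c^\infty(T^\ast\mathbb{T}^d)$, i.e. $a\in C_c^\infty(\mathbb{R}^d)$. The point of this choice is that $\Op_h(a)$ is then a Fourier multiplier, so $[-\tfrac12\Delta,\Op_h(a)]=0$ (this is \eqref{e:weylcomm} with $\partial_x a=0$), and the propagation identity \eqref{e:comm} reduces to
\[
\frac{d}{dt}\langle w_h(t),a\rangle=i\big\langle u_h(t,\cdot),[V(t,\cdot),\Op_h(a)]\,u_h(t,\cdot)\big\rangle .
\]
Integrating from $0$ to $t$, pairing with an arbitrary $\varphi\in C_c(\mathbb{R})$ and applying Fubini gives the exact identity
\[
\int_{\mathbb{R}}\varphi(t)\langle w_h(t),a\rangle\,dt-\Big(\int_{\mathbb{R}}\varphi\Big)\langle w_{u_h}^h,a\rangle= i\int_{\mathbb{R}}\Psi_\varphi(s)\,\big\langle u_h(s,\cdot),[V(s,\cdot),\Op_h(a)]\,u_h(s,\cdot)\big\rangle\,ds,
\]
where $\Psi_\varphi$ is bounded and compactly supported. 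The whole proposition then reduces to the vanishing of the right-hand side as $h\to0$: granting this and passing to a subsequence (refining the one defining $\mu$) along which also $w_{u_h}^h\to\mu_0$, one lets $h\to0$ and obtains
\[
\int_{\mathbb{R}}\varphi(t)\Big(\int_{\mathbb{R}^d}a(\xi)\,\bar\mu(t,d\xi)\Big)dt=\Big(\int_{\mathbb{R}}\varphi\Big)\int_{\mathbb{R}^d}a(\xi)\Big(\int_{\mathbb{T}^d}\mu_0(dx,d\xi)\Big);
\]
letting $\varphi$ range over $C_c(\mathbb{R})$ and $a$ over a countable subset of $C_c(\mathbb{R}^d)$ that determines finite measures, this says precisely that $\bar\mu(t,\cdot)=\int_{\mathbb{T}^d}\mu_0(dy,\cdot)$ for a.e.\ $t$, i.e. the constancy of $\bar\mu$ together with \eqref{e:margm0}.

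It remains to estimate the commutator term, and here I would follow the three-step scheme of \S\ref{s:continuous}--\S\ref{s:linfty}. If $V\in C^\infty(\mathbb{R}\times\mathbb{T}^d)$, then on the $x$-Fourier side $\widehat{[V(s),\Op_h(a)]u}(k)$ is a universal constant times $\sum_j\widehat V_{k-j}(s)\big(a(hj)-a(hk)\big)\widehat u(j)$, and the elementary bound $|a(hj)-a(hk)|\le h\|\nabla a\|_{L^\infty}|j-k|$ combined with the rapid decay of $\widehat V_m(s)$, locally uniformly in $s$, yields by a Schur test that $\|[V(s),\Op_h(a)]\|_{\mathcal{L}(L^2(\mathbb{T}^d))}\le C_{a,V}\,h$ on $\operatorname{supp}\Psi_\varphi$; since $\|u_h(s)\|_{L^2}=1$, the term is $O(h)$. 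For a potential $V$ satisfying (R), I would fix $\eps>0$, take $K_\eps$, $V_\eps$ and the continuous cut-off $\chi_\eps$ as in \S\ref{s:linfty}, and decompose $V=\chi_\eps V_\eps+\chi_\eps(V-V_\eps)+(1-\chi_\eps)V$. The term $\chi_\eps V_\eps$ is continuous, hence reduces to the smooth case after a further regularization treated as in \S\ref{s:continuous}; the term $\chi_\eps(V-V_\eps)$ contributes at most $2\eps\|\Op_h(a)\|$ in operator norm; and for $B_\eps(s):=m_{(1-\chi_\eps(s))V(s)}$ one has $|\langle u_h(s),[B_\eps(s),\Op_h(a)]u_h(s)\rangle|\le 2\|\Op_h(a)\|\,\|V\|_{L^\infty}\,\|(1-\chi_\eps(s))u_h(s)\|$, whence by Cauchy--Schwarz in $s$
\[
\Big|\int_{\mathbb{R}}\Psi_\varphi(s)\langle u_h(s),[B_\eps(s),\Op_h(a)]u_h(s)\rangle\,ds\Big|\le 2\|\Op_h(a)\|\,\|V\|_{L^\infty}\Big(\int|\Psi_\varphi|\Big)^{1/2}\Big(\int|\Psi_\varphi(s)|\,\|(1-\chi_\eps(s))u_h(s)\|^2\,ds\Big)^{1/2}.
\]
By \eqref{e:proj} the last factor converges, as $h\to0$ along a subsequence, to $\iint|\Psi_\varphi(s)|\,|1-\chi_\eps(s,x)|^2\,\nu_s(dx)\,ds$, where (exactly as in \S\ref{s:linfty}, via Corollary \ref{t:example}) $\nu_s$ is for a.e.\ $s$ an absolutely continuous probability measure on $\mathbb{T}^d$; since $s\mapsto|\Psi_\varphi(s)|$ is integrable and $\{\chi_\eps\neq1\}$ has Lebesgue measure $<2\eps$, this tends to $0$ as $\eps\to0$. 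Letting $h\to0$ and then $\eps\to0$ makes the commutator term vanish, completing the argument.

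The step I expect to be the real obstacle is precisely this last one, the treatment of the ``bad'' part $(1-\chi_\eps)V$ of the potential: it is here, and only here, that one must invoke the already-established absolute continuity (Theorem \ref{t:main}(i), equivalently Corollary \ref{t:example}), so that Proposition \ref{p:muc} for $L^\infty$ potentials sits logically downstream of it. The remaining ingredients --- the exact commutation $[-\tfrac12\Delta,\Op_h(a)]=0$, the $C^\infty$ symbolic estimate, and the approximation bookkeeping --- are routine.
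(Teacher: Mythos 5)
Your proposal is correct and follows essentially the same route as the paper's proof: test the Wigner distribution against $\xi$-only symbols so that $[-\tfrac12\Delta,\Op_h(a)]=0$, show the potential commutator vanishes in the limit via $O(h)$ pseudodifferential estimates for smooth $V$, approximation for continuous $V$, and (for $V$ satisfying (R)) a three-term decomposition whose ``bad'' piece is controlled by Cauchy--Schwarz and the already-established absolute continuity of Corollary~\ref{t:example}. The only cosmetic difference is that you pair the propagation identity against a time test function $\varphi\in C_c(\mathbb{R})$ and pass through $\Psi_\varphi$, whereas the paper integrates the exact identity over $[0,T]$ and establishes the limit at each fixed $T$; both yield the a.e.-in-$t$ conclusion \eqref{e:margm0}.
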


\begin{proof}
We write for $a\in C_{c}^{\infty}(\R^{d})$ and $T\in\mathbb{R}$:
\begin{multline*}
\la U_{V}(T)u_{h},a\left(  hD_{x}\right)  U_{V}(T)u_{h}\ra-\la u_{h},a\left(
hD_{x}\right)  u_{h}\ra\\
=-i\int_{0}^{T}\la U_{V}(t)u_{h},\left[  a\left(  hD_{x}\right)  ,-\frac
{\Delta}{2}+V\right]  U_{V}(t)u_{h}\ra dt=-i\int_{0}^{T}\la U_{V}%
(t)u_{h},\left[  a\left(  hD_{x}\right)  ,V\right]  U_{V}(t)u_{h}\ra dt.
\end{multline*}
If $V\in C^{\infty}(\R\times\T^{d})$, we have the estimate coming from
pseudodifferential calculus,
\[
\norm{\left[a(hD_{x}),  V\right]}_{L^{2}(\T^{d})\To L^{2}(\T^{d})}=\cO(h).
\]
This implies that, for every $T\in\mathbb{R}$:%
\begin{equation}
\lim_{h\rightarrow0^{+}}\la U_{V}(T)u_{h},a\left(  hD_{x}\right)
U_{V}(T)u_{h}\ra=\int_{T^{\ast}\mathbb{T}^{d}}a\left(  \xi\right)  \mu
_{0}\left(  dx,d\xi\right)  , \label{e:limm}%
\end{equation}
which in turn shows (\ref{e:margm0}).

When $V\in C(\R\times\T^{d})$, we establish (\ref{e:limm}) by showing that
\[
\norm{\left[a(hD_{x}),  V\right]}_{L^{2}(\T^{d})\To L^{2}(\T^{d})}%
\Lim_{h\To0}0.
\]
This can be proved by an approximation argument as in \S \ref{s:continuous}~:
\[
\lbrack a\left(  hD_{x}\right)  ,V]=[a\left(  hD_{x}\right)  ,V_{n}]+[a\left(
hD_{x}\right)  ,V-V_{n}],
\]
with $[a\left(  hD_{x}\right)  ,V_{n}]\Lim_{h\To0}0$ if $V_{n}\in C^{\infty
}(\R\times\T^{d})$, and
\[
\norm{[a(hD_{x}), V- V_n]}_{L^{2}\To L^{2}}\leq2\norm{a(hD_{x})}_{L^{2}\To
L^{2}}\norm{V-V_n}_{L^{\infty}}.
\]
If $V$ satisfies Assumption (R), we write with the same notation as in
\S \ref{s:linfty},
\begin{multline*}\int_{0}^{T}\la U_{V}(t)u_{h},[a\left(  hD_{x}\right)  ,V]U_{V}%
(t)u_{h}\ra dt\\
=\int_{0}^{T}\la U_{V}(t)u_{h},[a\left(  hD_{x}\right)  ,V_{\epsilon}%
\chi_{\epsilon}]U_{V}(t)u_{h}\ra dt\\
+\int_{0}^{T}\la U_{V}(t)u_{h},[a\left(  hD_{x}\right)  ,(V-V_{\epsilon}%
)\chi_{\epsilon}]U_{V}(t)u_{h}\ra dt\\
+\int_{0}^{T}\la U_{V}(t)u_{h},[a\left(  hD_{x}\right)
,V(1-\chi_{\epsilon })]U_{V}(t)u_{h}\ra dt.
\end{multline*}
For fixed $\epsilon$, the term $\int_{0}^{T}\la U_{V}(t)u_{h},[a\left(
hD_{x}\right)  ,V_{\epsilon}\chi_{\epsilon}]U_{V}(t)u_{h}\ra dt$ goes to $0$
as $h\To0$. The term $|\int_{0}^{T}\la U_{V}(t)u_{h},[a\left(  hD_{x}\right)
,(V-V_{\epsilon})\chi_{\epsilon}]U_{V}(t)u_{h}\ra dt|$ is less than
$2\epsilon\norm{a\left(  hD_{x}\right)}$. Finally,
\begin{multline*}
\left\vert \int_{0}^{T}\la U_{V}(t)u_{h},[a\left(  hD_{x}\right)
,V(1-\chi_{\epsilon})]U_{V}(t)u_{h}\ra dt\right\vert \\
\leq2\norm{V}_{\infty}\int_{0}^{T}\norm{a(hD_{x})U_V(t)u_h}_{L^{2}(\T^{d}%
)}\norm{(1- \chi_{\epsilon})U_V(t)u_h}_{L^{2}(\T^{d})}dt\\
\leq2\norm{V}_{\infty}\left(  \int_{0}^{T}\norm{a(hD_{x})U_V(t)u_h}_{L^{2}%
(\T^{d})}^{2}dt\right)  ^{1/2}\left(  \int_{0}^{T}%
\norm{(1- \chi_{\epsilon})U_V(t)u_h}_{L^{2}(\T^{d})}^{2}dt\right)  ^{1/2}%
\end{multline*}
and this goes to $0$ at the limits $h\To0$ and $\epsilon\To0$, by the same
argument as in \S \ref{s:linfty}. Again, we conclude that (\ref{e:limm}) holds
in this case. This concludes the proof of the proposition.
\end{proof}

\begin{corollary}
\label{c:vuLzero}Let $\Lambda$ be a primitive submodule of $\mathbb{Z}^{d}$.
If $\mu_{0}\left(  \mathbb{T}^{d}\times \Lambda^\bot\right)  =0$ then
$\sigma_{\Lambda}=0$, where $\sigma_{\Lambda}$ is the operator appearing in
Theorem \ref{t:precise}.
\end{corollary}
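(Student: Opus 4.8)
By construction (see the end of Section~\ref{s:propagation}), $\sigma_\Lambda$ is a finite sum, over strictly decreasing chains $\Lambda_1\supsetneq\cdots\supsetneq\Lambda_k\supsetneq\Lambda$ of primitive submodules (with $k=0$ allowed, in which case the leading module is $\Lambda$ itself), of the positive trace-class-valued measures $\tilde\rho_\Lambda^{\Lambda_1\ldots\Lambda_k}(0,\cdot)$ integrated over the product of resonant sets $R_{\Lambda_1}\times R_{\Lambda_2}(\Lambda_1)\times\cdots\times R_\Lambda(\Lambda_k)$. Since a positive operator-valued measure whose total trace vanishes is identically $0$, the plan is to show that the total trace of each such term is $0$, by dominating it by the mass of $\mu_0$ over $\mathbb{T}^d\times\Lambda^\perp$.

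First I would record the geometric fact that for every primitive $\Lambda'\supseteq\Lambda$ one has $\langle\Lambda\rangle\subseteq\langle\Lambda'\rangle$, hence $R_{\Lambda'}\subseteq(\Lambda')^\perp\subseteq\Lambda^\perp$; combined with the hypothesis $\mu_0(\mathbb{T}^d\times\Lambda^\perp)=0$ this yields $\mu_0(\mathbb{T}^d\times R_{\Lambda'})=0$ for all such $\Lambda'$, in particular for the leading module $\Lambda_1$ of any chain. Then I would run the successive decompositions of Sections~\ref{s:second}--\ref{s:successive} at time $t=0$: one has $\mu_0\rceil_{\mathbb{T}^d\times R_{\Lambda_1}}=\mu^{\Lambda_1}(0,\cdot)+\mu_{\Lambda_1}(0,\cdot)$ with both summands positive, and each further step splits $\mu^{\Lambda_1\ldots\Lambda_j}(0,\cdot)$ (after restriction to $\eta_j\in R_{\Lambda_{j+1}}(\Lambda_j)$) into the positive pieces $\mu_{\Lambda_{j+1}}^{\Lambda_1\ldots\Lambda_j}(0,\cdot)$ and $\mu^{\Lambda_1\ldots\Lambda_{j+1}}(0,\cdot)$. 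Chaining the resulting inequalities between total masses shows that every $\mu^{\Lambda_1\ldots\Lambda_j}(0,\cdot)$ and every $\mu_\Lambda^{\Lambda_1\ldots\Lambda_k}(0,\cdot)$ has total mass at most $\mu_0(\mathbb{T}^d\times R_{\Lambda_1})=0$; being positive, they all vanish identically.

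Finally I would feed this back through the trace identity~\eqref{e:fullintrinsic}. Applied with a symbol $a=a(x,\xi)$ that does not depend on the blow-up variables (so that $\Op_1^\Lambda$ reduces to the multiplication operator $m_{a(\cdot,\sigma)}$), and recalling that $\mu_\Lambda^{\Lambda_1\ldots\Lambda_k}(0,\cdot)$ is exactly $\tilde\mu_\Lambda^{\Lambda_1\ldots\Lambda_k}(0,\cdot)$ integrated over the $R_{\Lambda_{j+1}}(\Lambda_j)$'s and restricted to $\mathbb{T}^d\times R_{\Lambda_1}$, it gives $\Tr\bigl(\int_{R_{\Lambda_1}\times\cdots\times R_\Lambda(\Lambda_k)} m_{a(\cdot,\sigma)}\,\tilde\rho_\Lambda^{\Lambda_1\ldots\Lambda_k}(0,\cdots)\bigr)=\int a\,d\mu_\Lambda^{\Lambda_1\ldots\Lambda_k}(0,\cdot)=0$ for every admissible $a$. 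Letting $a$ run over symbols of the form $a(x)\chi_n(\xi)$ with $\chi_n$ increasing to $1$, and using that $\tilde\rho_\Lambda^{\Lambda_1\ldots\Lambda_k}(0,\cdot)$ is positive with total trace $\le1$, we conclude that the corresponding term $\int_{R_{\Lambda_1}\times\cdots\times R_\Lambda(\Lambda_k)}\tilde\rho_\Lambda^{\Lambda_1\ldots\Lambda_k}(0,\cdots)$ of $\sigma_\Lambda$ is the zero operator. Summing over all chains gives $\sigma_\Lambda=0$.

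The step I expect to require the most care is this last one: the measures $\tilde\mu_\Lambda^{\Lambda_1\ldots\Lambda_k}$ are supported at infinity in the $\eta$-variables while $\tilde\rho_\Lambda^{\Lambda_1\ldots\Lambda_k}$ lives at finite $\eta$, so one has to justify that passing to symbols $a\equiv1$ (which are not compactly supported in $\xi$, hence not in the symbol class) loses no mass --- which is precisely why one argues with the trace, a genuine finite positive measure on the product of resonant sets of total variation $\le1$, rather than with $\Op_1$ of an unbounded symbol. Everything else is bookkeeping of the positivity built into the constructions of Sections~\ref{s:decompo}--\ref{s:successive}.
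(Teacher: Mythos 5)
Your overall strategy --- dominate the total trace of each term in $\sigma_\Lambda$ by the mass of a semiclassical measure on $\mathbb{T}^d\times\Lambda^\perp$, using the geometric inclusion $R_{\Lambda_1}\subset\Lambda_1^\perp\subset\Lambda^\perp$ and the positivity built into the successive decompositions --- is the right one, and the final ``positivity plus zero total trace'' step is fine. But there is a genuine gap at the point where you write
$\mu_{0}\rceil_{\mathbb{T}^d\times R_{\Lambda_1}}=\mu^{\Lambda_1}(0,\cdot)+\mu_{\Lambda_1}(0,\cdot)$.
The decomposition of Theorem \ref{thm 1st2micro}(iii) holds for $\mu(t,\cdot)$ for \emph{a.e.} $t$, where $\mu(t,\cdot)$ is the weak-$\ast$ limit in $L^\infty(\mathbb{R};\mathcal{D}')$ of $w_{U_V(t)u_h}^h$, a merely a.e.-defined object. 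It is \emph{not} true that this object evaluated at $t=0$ equals $\mu_0:=\lim w_{u_h}^h$; for instance $\mu(t,\cdot)$ is geodesic-flow invariant for a.e.\ $t$ while $\mu_0$ need not be. So you cannot ``run the decomposition at $t=0$'' and land on $\mu_0$; and if instead you mean to apply the two-microlocal construction to the initial data $u_h$ alone, you then owe the reader a separate argument identifying the resulting fixed-time object with the propagated $\tilde\rho_\Lambda^{\Lambda_1\ldots\Lambda_k}(0,\cdot)$ appearing in the definition of $\sigma_\Lambda$, which is not free.

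The missing ingredient is precisely Proposition \ref{p:muc} (which, not accidentally, immediately precedes the corollary): it gives $\bar\mu=\int_{\mathbb{T}^d}\mu_0(dy,\cdot)$ and that $\bar\mu$ is constant in $t$, hence $\mu(t,\mathbb{T}^d\times\Lambda^\perp)=\mu_0(\mathbb{T}^d\times\Lambda^\perp)=0$ for a.e.\ $t$, and therefore $\mu(t,\mathbb{T}^d\times R_{\Lambda_1})=0$ for a.e.\ $t$ and every $\Lambda_1\supseteq\Lambda$. With this in hand, your chaining of positivity and the trace identity \eqref{e:fullintrinsic} shows that $\operatorname{Tr}\int\tilde\rho_\Lambda^{\Lambda_1\ldots\Lambda_k}(t,\cdot)=0$ for a.e.\ $t$; since this trace is conserved in $t$ by the propagation law (Proposition \ref{p:average} and the remark following Lemma \ref{l:Cinfty}, with $a\equiv1$), it vanishes at $t=0$ as well, so each $\tilde\rho_\Lambda^{\Lambda_1\ldots\Lambda_k}(0,\cdot)$ is zero on the relevant region and $\sigma_\Lambda=0$. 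In short: replace ``$\mu_0$ and $(0,\cdot)$'' in your chain of inequalities by ``$\mu(t,\cdot)$ for a.e.\ $t$'' (justified by Proposition \ref{p:muc}), and then pass from a.e.\ $t$ back to $t=0$ via trace conservation. The rest of your write-up then goes through.
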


\subsection{End of proof of Theorem \ref{t:main}\label{s:martingale}}

Let us turn to the proof of the last assertion of Theorem \ref{t:main}. Let us
consider the disintegration of the limit measure $\mu$ with respect to $\xi$.
Here, to simplify the discussion, after normalizing $\mu$ we may
assume that it is a probability measure (this is no loss of generality, since
the result is trivially true when $\mu=0$). We call $\bar{\mu}$ the
probability measure on $\mathbb{R}^{d}$, image of $\mu(t,\cdot)$ under the
projection map $(x,\xi)\mapsto\xi$. We know that it does not depend on $t$. We
denote by $\mu_{\xi}(t,\cdot)$ the conditional law of $x$ knowing $\xi$, when
the pair $(x,\xi)$ is distributed according to $\mu(t,\cdot)$. Starting from Theorem \ref{t:main} (i), we now show that, for $\bar{\mu}$-almost every $\xi$, the probability measure
$\mu_{\xi}(t,\cdot)$ is absolutely continuous.

We consider a filtration, that is to say, a sequence $\mathcal{F}_{n}%
\subset\mathcal{F}_{n+1}$ of Borel $\sigma$-fields of $\mathbb{R}^{d}$, such
that $\cup_{n} \mathcal{F}_{n}$ generates the whole $\sigma$-field of Borel
sets. We will choose $\mathcal{F}_{n}$ generated by a finite partition made of
hypercubes (that is, a family of disjoint sets of the form $[a_{1},
b_{1})\times\ldots\times[a_{d}, b_{d})$, where $a_{d}<b_{d}$ can be finite or
infinite). For every $\xi$, there is a unique such hypercube containing $\xi$,
and we denote this hypercube by $\mathcal{F}_{n}(\xi)$. Finally, we choose
$\mathcal{F}_{n}$ such that $\bar\mu$ does not put any weight on the boundary
of each hypercube.

We know (by the martingale convergence theorem) that, for $\bar\mu$-almost
every $\xi$, for every continuous compactly supported function $f$ and every
non-negative integrable $\theta$,
\begin{equation}
\int_{\mathbb{T}^{d}}f(x, \xi)\mu_{\xi}(t, dx)\theta(t)dt=\lim_{n} \frac
{\int_{\mathbb{T}^{d}\times\mathcal{F}_{n}(\xi)}f(x, \eta)\mu(t,
dx,d\eta)\theta(t)dt}{\int\mu(t, \mathbb{T}^{d}\times\mathcal{F}_{n}%
(\xi))\theta(t)dt}. \label{martingale}%
\end{equation}
Fix $\xi$ such that \eqref{martingale} holds. Since $\mu(t,\cdot)$ is itself
the limit of the Wigner distributions $w_{h}(t,\cdot)$ and since it does not
put any weight on the boundary of $\mathcal{F}_{n}(\xi)$, we can
choose\newline-- a sequence of smooth compactly supported functions $\chi_{n}$
(obtained by convolution of the characteristic function of $\mathcal{F}%
_{n}(\xi)$ by a smooth kernel), and\newline-- a sequence $h_{n}$, going to
zero as fast as we wish,\newline such that
\begin{equation}
\int_{\mathbb{T}^{d}}f(x, \xi)\mu_{\xi}(t, dx)\theta(t)dt=\lim_{n} \frac
{\int_{\mathbb{T}^{d}\times\mathbb{R}^{d}}\chi_{n}^{2}(\eta)f(x, \eta
)w_{h_{n}}(t, dx,d\eta)\theta(t)dt}{\int_{\mathbb{T}^{d}\times\mathbb{R}^{d}}
\chi_{n}^{2}(\eta) w_{h_{n}}(t, dx,d\eta)\theta(t)dt}%
\end{equation}
for all smooth compactly supported $f$ and every $\theta$.

The absolute continuity of $\mu_{\xi}$ now follows from Theorem \ref{t:main} (i), applied to the sequence of functions
\[
v_{h_{n}}=\frac{\Op_{h_{n}}(\chi_{n})u_{h_{n}}}{|| \Op_{h_{n}}(\chi
_{n})u_{h_{n}}||}.
\]

\section{Observability estimates\label{s:obs}}

We now turn to the proof of Theorem \ref{t:obs}. Using the
uniqueness-compactness argument of Bardos, Lebeau and Rauch \cite{BLR} and a
Littlewood-Payley decomposition, one can reduce the proof of Theorem
\ref{t:obs} to the following Proposition \ref{p:obssemiclassic}. This is
clearly detailed in \cite{BurqZworski11}, from which we borrow the notation.
This reduction requires the potential to be {\em time-independent} and this is why
we make this assumption in Theorem \ref{t:obs}.

Let $\chi\in C_{c}^{\infty}\left(  \left(  -1/2,2\right)  \right)  $ be a
cut-off function equal to $1$ close to $1$ and define, for $h>0$:%
\[
\Pi_{h}u_{0}:=\chi\left(  h^{2}\left(  -\frac{1}{2}\Delta+V\right)  \right)
\]

\begin{proposition}
\label{p:obssemiclassic}Given any $T>0$ and any open set $\omega
\subset\mathbb{T}^{d}$, there exist $C,h_{0}>0$ such that:%
\begin{equation}
\left\Vert \Pi_{h}u_{0}\right\Vert _{L^{2}\left(  \mathbb{T}^{d}\right)  }%
^{2}\leq C\int_{0}^{T}\left\Vert U_{V}\left(  t\right)  \Pi_{h}u_{0}%
\right\Vert _{L^{2}\left(  \omega\right)  }^{2}dt, \label{e:obssc}%
\end{equation}
for every $0<h<h_{0}$ and every $u_{0}\in L^{2}\left(  \mathbb{T}^{d}\right)
$.
\end{proposition}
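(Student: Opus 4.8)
\textbf{Proof strategy for Proposition \ref{p:obssemiclassic}.}
The plan is to argue by contradiction, reading off a semiclassical measure and applying the structure and propagation result of Theorem \ref{t:precise}. Suppose \eqref{e:obssc} fails: then there are $h_n\to0^+$ and data which, after normalization, give $v_n:=\Pi_{h_n}u_{0,n}$ with $\norm{v_n}_{L^2(\mathbb{T}^d)}=1$ and $\int_0^T\norm{U_V(t)v_n}_{L^2(\omega)}^2\,dt\to0$. Extract a subsequence along which the Wigner distributions $w_{U_V(t)v_n}^{h_n}$ converge to a semiclassical measure $\mu(t,\cdot)$ as in Theorem \ref{t:precise}. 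Since $\Pi_{h_n}$ commutes with $U_V(t)$ and localizes $v_n$ (hence $U_V(t)v_n$) to a fixed compact frequency window, the family is $h_n$-oscillating with no escape of mass (exactly as in the proof of Corollary \ref{t:example}): for a.e.\ $t$, $\mu(t,\cdot)$ is supported in a fixed compact subset of $T^*\mathbb{T}^d$ and $\int_{T^*\mathbb{T}^d}\mu(t,dx,d\xi)=1$. Using \eqref{e:proj} and the openness of $\omega$ (approximate $\mathbf{1}_\omega$ from below by $a\in C_c^\infty$), the vanishing hypothesis passes to the limit and yields $\int_0^T\int_{T^*\mathbb{T}^d}\mathbf{1}_\omega(x)\,\mu(t,dx,d\xi)\,dt=0$; hence, for a.e.\ $t$, the measure $\bar\nu(t,\cdot):=\int_{\mathbb{R}^d}\mu(t,\cdot,d\xi)$ on $\mathbb{T}^d$ gives no mass to $\omega$.

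Now invoke Theorem \ref{t:precise}: $\bar\nu(t,\cdot)=\sum_\Lambda\nu_\Lambda(t,\cdot)$ as a sum of non-negative measures, so $\nu_\Lambda(t,\omega)=0$ for a.e.\ $t$ and every primitive $\Lambda$. The claim is that this forces $\nu_\Lambda\equiv0$ for each $\Lambda$; granting this, $\bar\nu\equiv0$, contradicting $\int_{T^*\mathbb{T}^d}\mu(t,\cdot)=1$, and the proposition follows. To prove the claim, fix $\Lambda$ and recall the trace formula $\int_{\mathbb{T}^d}b(x)\,\nu_\Lambda(t,dx)=\Tr\bigl(m_{\la b\ra_\Lambda}\,U_{\la V\ra_\Lambda}(t)\,\sigma_\Lambda\,U_{\la V\ra_\Lambda}(t)^*\bigr)$. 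Diagonalize the non-negative trace-class operator $\sigma_\Lambda=\sum_j\lambda_j\,\la\psi_j,\cdot\ra\psi_j$, with $\lambda_j>0$ and $\{\psi_j\}$ orthonormal in $L^2_\Lambda(\mathbb{T}^d)$. Because $U_{\la V\ra_\Lambda}(t)$ preserves $L^2_\Lambda(\mathbb{T}^d)$, the density $|U_{\la V\ra_\Lambda}(t)\psi_j|^2$ has $x$-Fourier modes in $\Lambda-\Lambda=\Lambda$, hence equals its own $\la\cdot\ra_\Lambda$-average; using self-adjointness of $\la\cdot\ra_\Lambda$ this gives $\nu_\Lambda(t,dx)=\bigl(\sum_j\lambda_j\,|U_{\la V\ra_\Lambda}(t)\psi_j(x)|^2\bigr)\,dx$ (convergent in $L^1$). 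All terms being non-negative, $\nu_\Lambda(t,\omega)=0$ for a.e.\ $t$ forces $\int_0^T\norm{U_{\la V\ra_\Lambda}(t)\psi_j}_{L^2(\omega)}^2\,dt=0$ for every $j$, i.e.\ $U_{\la V\ra_\Lambda}(t)\psi_j$ vanishes on $\omega$ for a.e.\ $t\in(0,T)$.

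It remains to conclude $\psi_j=0$. Since $\psi_j\in L^2_\Lambda(\mathbb{T}^d)$ is constant along $\Lambda^\perp$ and $U_{\la V\ra_\Lambda}$ is the Schr\"odinger propagator on the subtorus $\mathbb{T}_\Lambda$ for the potential $\la V\ra_\Lambda\in L^\infty(\mathbb{T}_\Lambda)$, the vanishing of $U_{\la V\ra_\Lambda}(t)\psi_j$ on $\omega$ for a.e.\ $t$ is equivalent to its vanishing on the image in $\mathbb{T}_\Lambda$ of the $\Lambda^\perp$-saturation of $\omega$, which is a non-empty open subset of $\mathbb{T}_\Lambda$. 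Expanding $\psi_j$ in an eigenbasis of $-\tfrac12\Delta_\Lambda+\la V\ra_\Lambda$ and grouping equal eigenvalues, each spectral component of $\psi_j$ is then an eigenfunction vanishing on a non-empty open set, hence is $0$ by the classical weak unique continuation property for Schr\"odinger operators with bounded potentials; therefore $\psi_j=0$, contradicting $\lambda_j>0$. So $\sigma_\Lambda=0$ and $\nu_\Lambda=0$. (Equivalently, one may run an induction on $\dim\mathbb{T}^d$, using Theorem \ref{t:obs} on $\mathbb{T}_\Lambda$ with $\operatorname{rk}\Lambda<d$.) The extreme strata are covered the same way: the $\Lambda=\{0\}$ term is a constant multiple of Lebesgue measure, which vanishes on $\omega$ only if it is $0$; and the $\xi=0$ stratum $\Lambda=\mathbb{Z}^d$, if present (it is absent if $\chi$ is taken with $0\notin\supp\chi$, as one may after the Littlewood--Paley reduction), is handled with $\la V\ra_{\mathbb{Z}^d}=V$, $U_{\la V\ra_{\mathbb{Z}^d}}=U_V$, invoking unique continuation for $-\tfrac12\Delta+V$ on $\mathbb{T}^d$ itself.

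\textbf{Main obstacle.} The measure extraction and the operator diagonalization are routine once Theorem \ref{t:precise} is available; the two delicate points are (a) justifying that no mass escapes to infinity, so that $\bar\nu$ has full mass $1$ and the contradiction actually closes --- this is where the spectral cutoff $\Pi_h$ is used in an essential way --- and (b) the reduction of ``$\nu_\Lambda$ vanishes on $\omega$'' to a unique continuation statement on $\mathbb{T}_\Lambda$: one must verify that the $\Lambda^\perp$-saturation of $\omega$ descends to a non-empty open subset of $\mathbb{T}_\Lambda$, and then apply weak unique continuation for $-\tfrac12\Delta_\Lambda+\la V\ra_\Lambda$ with the merely bounded potential $\la V\ra_\Lambda$. (One should also note, in passing, that averaging along $\Lambda^\perp$ preserves condition (R), which is needed if one prefers to argue by induction rather than by unique continuation.)
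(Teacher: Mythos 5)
Your overall strategy is the same as the paper's up to the final step: argue by contradiction, extract a semiclassical measure of full mass (no escape to infinity thanks to the spectral cutoff $\Pi_h$, so that $\mu(t,T^*\mathbb{T}^d)=1$ and $\mu(t,\mathbb{T}^d\times\{0\})=0$), pass the vanishing of the $L^2(\omega)$ norms to the limit, invoke the decomposition $\sum_\Lambda\nu_\Lambda$ of Theorem \ref{t:precise}, diagonalize each $\sigma_\Lambda$, and reduce to showing that each eigenvector $\psi_j$ must vanish. The paper then closes the argument by a \emph{coupled induction} on the dimension of the torus: Theorem \ref{t:obs} in dimension $<d$ is assumed, and this inductive observability inequality on $\mathbb{T}_{\Lambda}$ is applied to each $\psi_j$ to get $\sigma_\Lambda=0$; Theorem \ref{t:obs} in dimension $d$ then follows from Proposition \ref{p:obssemiclassic} in dimension $d$ via Bardos--Lebeau--Rauch. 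The base case $d=1$ is handled directly by the invariance of $\mu$ under the geodesic flow (an invariant measure giving no mass to $\xi=0$ has $x$-marginal equal to Lebesgue, which cannot vanish on $\omega$).

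Your shortcut via spectral decomposition and eigenfunction unique continuation has a gap at the last step. From $\int_0^T\norm{U_{\la V\ra_\Lambda}(t)\psi_j}^2_{L^2(\la\omega\ra_\Lambda)}\,dt=0$ you do obtain, by strong continuity, $U_{\la V\ra_\Lambda}(t)\psi_j=0$ on $\la\omega\ra_\Lambda$ for every $t\in[0,T]$; but you cannot conclude from this that each spectral projection $P_E\psi_j$ vanishes on $\la\omega\ra_\Lambda$. Extracting $P_E\psi_j$ by the time average $\frac{1}{S}\int_0^S e^{iEt}U_{\la V\ra_\Lambda}(t)\psi_j\,dt$ requires $S\to\infty$, i.e.\ vanishing over an unbounded time range; over a finite window $[0,T]$ the series $\sum_E e^{-itE}P_E\psi_j$ in $L^2(\la\omega\ra_\Lambda)$ is only $\ell^2$-summable in $E$, and the eigenvalues of $-\tfrac12\Delta_{\Lambda}+\la V\ra_\Lambda$ (bounded perturbations of the highly degenerate eigenvalues of $-\tfrac12\Delta_{\Lambda}$) can cluster with arbitrarily small gaps, so no Ingham-type uniqueness theorem for exponential sums on a short interval applies. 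What is actually needed at this point is precisely the observability inequality on $\mathbb{T}_{\Lambda}$, i.e.\ the induction hypothesis. Your parenthetical remark suggesting induction on $\operatorname{rk}\Lambda$ is the correct route and is what the paper does; the eigenfunction-UCP shortcut should be dropped, or else replaced by a genuine finite-time unique continuation theorem for the time-dependent Schr\"odinger evolution with merely bounded potential, duly cited and verified to apply to $\la V\ra_\Lambda$.
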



\begin{proof}
We argue by contradiction; if (\ref{e:obssc}) were false, then there would
exist a sequence $\left(  h_{n}\right)  $ tending to zero and $\left(
u_{0,n}\right)  $ in $L^{2}\left(  \mathbb{T}^{d}\right)  $ such that
$\Pi_{h_{n}}u_{0,n}=u_{0,n}$,%
\[
\left\Vert u_{0,n}\right\Vert _{L^{2}\left(  \mathbb{T}^{d}\right)
}=1\text{,\quad}\lim_{n\rightarrow\infty}\int_{0}^{T}\left\Vert U_{V}\left(
t\right)  u_{0,n}\right\Vert _{L^{2}\left(  \omega\right)  }^{2}dt=0.
\]
After eventually extracting a subsequence, we can assume that $\left(
u_{0,n}\right)  $ has a semiclassical measure $\mu_{0}$ and that the Wigner
distributions of $\left(  U_{V}\left(  t\right)  u_{0,n}\right)  $ converge
weak-$\ast$ to some $\mu\in L^{\infty}\left(  \mathbb{R};\mathcal{M}%
_{+}\left(  T^{\ast}\mathbb{T}^{d}\right)  \right)  $. By construction, we
have that:%
\[
\mu_{0}\left(  T^{\ast}\mathbb{T}^{d}\right)  =1,\quad\mu_{0}\left(
\mathbb{T}^{d}\times\left\{  0\right\}  \right)  =0;
\]
and therefore, by Proposition \ref{p:muc}, the same holds for $\mu\left(
t,\cdot\right)  $ for a.e. $t\in\mathbb{R}$. Moreover,%
\begin{equation}
\int_{0}^{T}\mu\left(  t,\omega\times\mathbb{R}^{d}\right)  dt=0\text{.}%
\label{e:obsmeas}%
\end{equation}

Now, we shall use Theorem \ref{t:precise} to obtain a contradiction. We first
establish the inequality for $d=1$ and then use an induction on the dimension.

\emph{Case }$d=1$. Since $\mu\left(  t,\mathbb{T}\times\left\{  0\right\}
\right)  =0$ and $\mu\left(  t,\mathbb{\cdot}\right)  $ is invariant by the
geodesic flow, it turns out that $\mu\left(  t,\cdot\right)  $ is constant.
Since (\ref{e:obsmeas}) holds, necessarily $\mu\left(  t,\mathbb{\cdot
}\right)  =0$, which contradicts the fact that $\mu\left(  t,T^{\ast
}\mathbb{T}\right)  =1$. This establishes Proposition \ref{p:obssemiclassic},
and therefore, Theorem \ref{t:obs} for $d=1$.

\emph{Case }$d\geq2$. We make the induction hypothesis that Proposition
\ref{p:obssemiclassic} holds for all tori $\R^{n}/2\pi\Gamma$ with $n\leq
d-1$, and $\Gamma$ a lattice in $\R^{n}$ such that $\left[ \la x, y\ra\in\IQ\:
\forall y\in\IQ\Gamma\Leftrightarrow x\in\IQ\Gamma\right] $.

Now, as shown in Theorem \ref{t:precise}, for $b\in L^{\infty}\left(
\mathbb{T}^{d}\right)  $ we have:%
\[
\int_{T^{\ast}\mathbb{T}^{d}}b\left(  x\right)  \mu\left(  t,dx,d\xi\right)
=\sum_{\Lambda}\int_{\T^{d}}b\left(  x\right)  \nu_{\Lambda}(t, dx)
=\sum_{\Lambda}\operatorname*{Tr}\left(  m_{\la b\ra_{\Lambda}}\,U_{\la V\ra_{\Lambda}%
}(t)\,\sigma_{\Lambda}\,{U_{\la V\ra_{\Lambda}}(t)}^{\ast}\right)  ,
\]
where $m_{\la b\ra_{\Lambda}}$ denotes multiplication by $ \la b\ra_{\Lambda}$ and $\sigma_{\Lambda}$ is a
trace-class positive operator on $L^{2}\left(  \mathbb{T}_\Lambda\right)  $,
where recall, $\mathbb{T}_{\Lambda}=\left\langle \Lambda\right\rangle
/2\pi\Lambda$.

For $\Lambda=0$, the measure $\nu_{\Lambda}(t)$ is constant in $x$, and since
$\nu_{\Lambda}(t, \omega)=0$ we have $\nu_{\Lambda}(t)=0$.

The fact that $\mu\left(  t,\mathbb{T}^{d}\times\left\{  0\right\}  \right)
=0$ implies that $\sigma_{\Lambda}=0$ for $\Lambda=\mathbb{Z}^{d}$. Therefore,
it suffices to show that $\sigma_{\Lambda}=0$ for every primitive non-zero
submodule $\Lambda\subset\mathbb{Z}^{d}$ of rank $\leq d-1$.

The torus $\mathbb{T}_{\Lambda}$ has dimension $\leq d-1$ and falls into the
range of our induction hypothesis. Since (\ref{e:obsmeas}) holds, we conclude
that:%
\[
\int_{0}^{T}\operatorname*{Tr}\left(  m_{\la\mathbf{1}_{\omega}\ra_{\Lambda}}\,U_{\la
V\ra_{\Lambda}}(t)\,\sigma_{\Lambda}\,{U_{\la V\ra_{\Lambda}}(t)}^{\ast
}\right)  dt=0,
\]
and hence
\[
\int_{0}^{T}\operatorname*{Tr}\left(  m_{\mathbf{1}_{\la\omega\ra_{\Lambda}}}\,U_{\la
V\ra_{\Lambda}}(t)\,\sigma_{\Lambda}\,{U_{\la V\ra_{\Lambda}}(t)}^{\ast
}\right)  dt=0,
\]
where $\la\omega\ra_{\Lambda}$ is the open set where $\la\mathbf{1}_{\omega}\ra_{\Lambda}>0$. By our induction hypothesis we have:\footnote{To deduce
this from Theorem \ref{t:obs}, it suffices to write $\sigma_{\Lambda}$ as a
linear combination of orthogonal projectors on an orthonormal basis of
eigenfunctions of $\sigma_{\Lambda}$:%
\[
\sigma_{\Lambda}=\sum_{n\in\mathbb{N}}\lambda_{n}\left\vert \phi
_{n}\right\rangle \left\langle \phi_{n}\right\vert ;
\]
since $\lambda_{n}\geq0$ and
$\sum_{n\in\mathbb{N}}\lambda_{n}<\infty$ the observability
inequality for $\sigma_{\Lambda}$ follows from the fact that it
holds for every $\phi_{n}$.}%
\[
\operatorname*{Tr}\left(  \sigma_{\Lambda}\right)  \leq C(T,\la\omega\ra_{\Lambda}) \int_{0}%
^{T}\operatorname*{Tr}\left(  m_{\mathbf{1}_{\la\omega\ra_{\Lambda}}}\,U_{\la V\ra_{\Lambda}%
}(t)\,\sigma_{\Lambda}\,{U_{\la V\ra_{\Lambda}}(t)}^{\ast}\right)  dt,
\]
and thus $\sigma_{\Lambda}=0$ (for all $\Lambda$) and $\mu(t,T^{\ast}%
\T^{d})=0$. This contradicts the fact that $\mu\left(  t,T^{\ast}%
\mathbb{T}^{d}\right)  =1$.
\end{proof}

Coming back to the semiclassical measures of Theorem \ref{t:main}, it is now obvious that
$$\int_0^T \mu(t, \omega\times\R^d)dt\geq \frac{T}{C(T, \omega)}  \mu_0(T^*\T^d ).$$
Corollary \ref{c:positiveopen} can then be derived by the same argument as in \S \ref{s:martingale}.

\section{Appendix~: pseudodifferential calculus\label{s:PDO}}

In the paper, we use the Weyl quantization with parameter $h$,
that associates to a function $a$ on
$T^{*}\IR^{d}=\IR^{d}\times\IR^{d}$ an operator $\Op_{h}(a)$, with
kernel
\[
K_{a}^{h}(x, y)=\frac{1}{(2\pi h)^{d}}\int_{\IR^{d}} a\left(
\frac{x+y}2, \xi\right)  e^{\frac{i}h \xi.(x-y)}d\xi.
\]
If $a$ is smooth and has uniformly bounded derivatives, then this
defines a continuous operator $\cS(\IR^{d})\To \cS(\IR^{d})$, and
also $\cS^{\prime }(\IR^{d})\To \cS^{\prime}(\IR^{d})$. If $a$ is
$(2\pi\IZ)^{d}$-periodic with respect to the first variable (which
is always the case in this paper), the operator preserves the
space of $(2\pi\IZ)^{d}$-periodic distributions on $\IR^{d}$. We
note the relation $\Op_{h}(a(x, \xi))=\Op_{1}(a(x, h\xi)).$

We use two standard results of pseudodifferential calculus.

\begin{theorem}
(The Calder\'on-Vaillancourt theorem)

There exists an integer $K_{d}$, and a constant $C_{d}>0$
(depending on the dimension $d$) such that, if $a$ if a smooth
function on $T^{*}\IT^{d}$, with uniformly bounded derivatives,
then
\[
\norm{\Op_1(a)}_{L^{2}(\IT^{d})\To L^{2}(\IT^{d})}\leq
C_{d}\sum_{\alpha \in\IN^{2d},|\alpha|\leq K_{d}}
\sup_{T^{*}\IT^{d}}|\partial^{\alpha}a|.
\]

\end{theorem}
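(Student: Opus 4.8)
The plan is to diagonalize the $L^{2}(\mathbb{T}^{d})$ side in the Fourier basis and to bound the resulting infinite matrix by the Schur test. Write $e_{k}(x):=(2\pi)^{-d/2}e^{ik\cdot x}$, $k\in\mathbb{Z}^{d}$, and let $\widehat{a}_{m}(\xi)$ be the $m$-th $x$-Fourier coefficient of $a$, as in the introduction. Polarizing the explicit formula for the Wigner distribution recalled there (taken at $h=1$) gives
\[
\langle e_{j},\Op_{1}(a)e_{k}\rangle_{L^{2}(\mathbb{T}^{d})}=\frac{1}{(2\pi)^{d/2}}\,\widehat{a}_{j-k}\!\left(\frac{j+k}{2}\right),\qquad j,k\in\mathbb{Z}^{d},
\]
so that, up to the unitary Fourier transform $u\mapsto(\widehat u(k))_{k}$, the operator $\Op_{1}(a)$ is the operator on $\ell^{2}(\mathbb{Z}^{d})$ with matrix $M_{jk}:=(2\pi)^{-d/2}\widehat{a}_{j-k}\big((j+k)/2\big)$; it therefore suffices to bound the $\ell^{2}\to\ell^{2}$ norm of $M$ (first on finitely supported sequences, i.e.\ on trigonometric polynomials, then extend by density, which is legitimate since $\Op_{1}(a)$ is continuous on periodic distributions).

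The crucial observation is that, although the symbol is sampled at the momenta $\xi=(j+k)/2$, which run off to infinity along the matrix, each entry carries the $x$-Fourier coefficient $\widehat{a}_{j-k}$, and smoothness of $a$ in $x$ alone produces enough decay. Integrating by parts $2N$ times against $(-\Delta_{x})^{N}$ on $\mathbb{T}^{d}$, together with the trivial bound for $m=0$, yields, uniformly in $\xi\in\mathbb{R}^{d}$,
\[
|\widehat{a}_{m}(\xi)|\leq C_{N}\,(1+|m|)^{-2N}\sum_{|\alpha|\leq 2N}\sup_{T^{\ast}\mathbb{T}^{d}}|\partial_{x}^{\alpha}a|.
\]
Choosing $2N>d$ and substituting $m=j-k$, so that $\sum_{k}|M_{jk}|=(2\pi)^{-d/2}\sum_{m}|\widehat{a}_{m}(j-m/2)|$ and $\sum_{j}|M_{jk}|=(2\pi)^{-d/2}\sum_{m}|\widehat{a}_{m}(k+m/2)|$, both the row sums and the column sums are bounded, uniformly in $j$ resp.\ $k$, by $C_{d}\sum_{|\alpha|\leq K_{d}}\sup_{T^{\ast}\mathbb{T}^{d}}|\partial^{\alpha}a|$, where $K_{d}$ is the least even integer exceeding $d$ and where we have kept only $x$-derivatives (which only strengthens the bound). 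The Schur test then bounds $\|\Op_{1}(a)\|_{L^{2}(\mathbb{T}^{d})\to L^{2}(\mathbb{T}^{d})}$ by the geometric mean of the two, which is exactly the asserted estimate.

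I do not expect a genuine obstacle here: the only real point is recognizing that on the compact torus the decay feeding the Schur test comes entirely from the regularity of $a$ in the configuration variable, the sampled momenta being harmless — which is precisely why the periodic case is strictly easier than the Euclidean one. If one wanted sharper control with fewer derivatives, or needed the operator-valued and two-microlocal symbol classes used elsewhere in the paper, the natural alternative is a Cotlar--Stein almost-orthogonality argument after a dyadic partition of unity in $\xi$; one could also simply deduce the periodic statement from the classical Euclidean Calder\'{o}n-Vaillancourt theorem by lifting to $\mathbb{R}^{d}$ and periodizing, at the cost of invoking that harder result.
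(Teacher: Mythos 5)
Your proof is correct, and it takes a genuinely different route from the paper's. The paper does not give a proof at all: it cites the Euclidean Calder\'on--Vaillancourt theorem from Dimassi--Sj\"ostrand and remarks that it can be adapted to a compact manifold by working in coordinate charts. You instead give a self-contained argument tailored to the torus, passing to the $\ell^{2}(\mathbb{Z}^{d})$ matrix $M_{jk}=(2\pi)^{-d/2}\widehat{a}_{j-k}\bigl((j+k)/2\bigr)$ (which is consistent with the Wigner formula stated in the introduction), obtaining uniform-in-$\xi$ decay in $|j-k|$ by integrating by parts in $x$ on $\mathbb{T}^{d}$, and closing with the Schur test once $2N>d$. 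This is both simpler and sharper than what a localize-and-patch argument would deliver here: your version uses only $x$-derivatives of the symbol (the stated estimate allows all of $\partial^{\alpha}$ with $\alpha\in\mathbb{N}^{2d}$, so your bound is strictly stronger), and it avoids the bookkeeping of a partition of unity. The Euclidean route, or the Cotlar--Stein variant you mention, is what one would reach for in settings where periodicity does not hand you the off-diagonal decay for free, e.g.\ for the operator-valued or two-microlocal symbol classes used elsewhere in the paper; for the plain scalar torus case, your Fourier-side Schur argument is the natural proof.

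One small expository point worth making explicit: the passage from the quadratic form $\langle u,\Op_{1}(a)u\rangle$ (the Wigner pairing) to the bilinear matrix entries $\langle e_{j},\Op_{1}(a)e_{k}\rangle$ by polarization is legitimate because $\Op_{1}(a)$ is a genuine (densely defined, continuous on periodic distributions) operator, not merely a quadratic form, so you are simply reading off its matrix in the Fourier orthonormal basis; stating this avoids any appearance of circularity with the boundedness you are trying to prove.
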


A proof in the case of $L^{2}(\IR^{d})$ can be found in
\cite{DimassiSjostrand}. It can be adapted to the case of a
compact manifold by working locally, in coordinate charts.

We also recall the following formula for the product of two
pseudodifferential
operators (see for instance \cite{DimassiSjostrand}, p. 79)~: $\Op_{1}%
(a)\circ\Op_{1}(b)=\Op_{1}(a\sharp b),$ where
\[
a\sharp b(x, \xi)=\frac1{(2\pi)^{4d}}\int_{\IR^{4d}}
e^{\frac{i}2\sigma(u_{1}, u_{2})} (\cF a_{z})(u_{1})(\cF
b_{z})(u_{2}) du_{1}du_{2},
\]
where we let $z=(x, \xi)\in\IR^{2d}$, $a_{z}$ is the function
$\omega\mapsto a(z+\omega)$, and $\cF$ is the Fourier transform.
We can deduce from this formula and from the
Calder\'on-Vaillancourt theorem the following estimate~:

\begin{proposition}
Let $a$ and $b$ be two smooth functions on $T^{*}\IT^{d}$, with
uniformly bounded derivatives.
\[
\norm{\Op_1(a)\circ \Op_1(b)-\Op_1(ab)}_{L^{2}(\IT^{d})\To
L^{2}(\IT^{d})}\leq C_{d} \sum_{\alpha\in\IN^{2d},|\alpha|\leq
K_{d}} \sup_{T^{*}\IT^{d}} |\partial^{\alpha}D(a, b)|,
\]
where we denote $D(a, b)$ the function $D(a, b)(x, \xi)=\left(
\partial _{x}\partial_{\eta}-\partial_{y}\partial_{\xi}\right)
(a(x, \xi)b(y, \eta))\rceil_{x=y, \eta=\xi}.$
\end{proposition}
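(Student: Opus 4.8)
The starting point is the exact composition formula recalled just above, $\Op_1(a)\circ\Op_1(b)=\Op_1(a\sharp b)$, together with the observation that the pointwise product $ab$ is exactly the contribution of the constant term of the oscillatory phase: replacing $e^{\frac{i}{2}\sigma(u_1,u_2)}$ by $1$ in the formula for $a\sharp b$ and using Fourier inversion gives $a_z(0)\,b_z(0)=a(z)b(z)$. Hence, with $z=(x,\xi)$,
\[
a\sharp b(z)-a(z)b(z)=\frac{1}{(2\pi)^{4d}}\int_{\IR^{4d}}\bigl(e^{\frac{i}{2}\sigma(u_1,u_2)}-1\bigr)(\cF a_z)(u_1)(\cF b_z)(u_2)\,du_1\,du_2 .
\]
All the integrals are oscillatory integrals in the usual sense for symbols with bounded derivatives; since the argument is local one may assume $a,b$ compactly supported in a coordinate chart, the $2\pi\IZ^d$-periodicity in $x$ playing no role.

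Next I would use $e^{is}-1=is\int_0^1 e^{ist}\,dt$ with $s=\tfrac12\sigma(u_1,u_2)$ and move the factor $\sigma(u_1,u_2)$ — which is bilinear and linear in each $u_j$ separately — onto $a$ and $b$: a monomial $u_{1,\bullet}(\cF a_z)(u_1)$ is a constant multiple of $\cF(\partial_\bullet a_z)(u_1)$, and similarly in $u_2$ with $b$. This rewrites the difference as a multiple of $\int_0^1 r_t(z)\,dt$, where $r_t$ is obtained from the oscillatory integral with phase $e^{\frac{it}{2}\sigma(u_1,u_2)}$ applied to the Fourier transform of the symbol $(\partial_x\partial_\eta-\partial_y\partial_\xi)\bigl(a(x,\xi)b(y,\eta)\bigr)$, viewed as a function of the two offsets and evaluated on the diagonal $y=x$, $\eta=\xi$. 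For $t=0$ the phase is trivial, so $r_0=D(a,b)$ and the $t=0$ contribution is a constant multiple of $\Op_1(D(a,b))$, whose operator norm is bounded by $C_d\sum_{|\alpha|\le K_d}\sup|\partial^\alpha D(a,b)|$ directly by the Calderón--Vaillancourt theorem.

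It remains to handle the part coming from $t\in(0,1]$. After the linear change of variable $u_2\mapsto u_2/t$, the twisted product with phase $e^{\frac{it}{2}\sigma}$ becomes, up to a dilation acting on the second symbol, an ordinary Weyl product; consequently $\Op_1$ of such a twisted product factors as a composition $\Op_1^{(t)}(f)\,\Op_1^{(t)}(g)$ of two suitably rescaled Weyl quantizations, and the composition estimate together with Calderón--Vaillancourt bounds its $L^2$-operator norm by a product of finitely many supnorms of derivatives of $f$ and of $g$, \emph{with a constant uniform in} $t\in[0,1]$. Applying this to the finitely many tensor-product terms that make up $(\partial_x\partial_\eta-\partial_y\partial_\xi)(a\otimes b)$ and integrating over $t$ closes the estimate, the Leibniz rule letting one recognise the resulting combinations of derivatives as controlled by $\sup|\partial^\alpha D(a,b)|$, $|\alpha|\le K_d$.

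The delicate point — and essentially the only one — is this last reorganisation. A careless treatment produces a bound of the form $C_d\bigl(\sum_{|\alpha|\le K_d}\sup|\partial^\alpha a|\bigr)\bigl(\sum_{|\beta|\le K_d}\sup|\partial^\beta b|\bigr)$, which is weaker than the stated one and not implied by it. To land on the sharp form one must never split the antisymmetric combination $\partial_x\partial_\eta-\partial_y\partial_\xi$ into its two pieces: one keeps $(\partial_x\partial_\eta-\partial_y\partial_\xi)(a\otimes b)$ as a single symbol on the extended phase space, applies the multi-dimensional Calderón--Vaillancourt theorem and the $t$-rescaling to \emph{that} symbol, and uses that its restriction to the diagonal, together with all its derivatives, is exactly $D(a,b)$ and its derivatives. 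Arranging the $t$-dilation so as to remain compatible with this grouped point of view, uniformly for $t\in[0,1]$, is where the real work lies; everything else is the routine symbolic calculus already used freely elsewhere in the paper.
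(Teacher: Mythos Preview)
The paper does not actually prove this proposition; it simply asserts that the estimate follows from the composition formula and the Calder\'on--Vaillancourt theorem. So there is no ``paper's proof'' to compare against beyond that one sentence.

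Your derivation through the third paragraph is the standard one and is correct: writing $e^{is}-1=is\int_0^1 e^{ist}\,dt$, converting the factor $\sigma(u_1,u_2)$ into derivatives, and bounding each $t$-slice via Calder\'on--Vaillancourt (uniformly in $t$ after rescaling) yields a bound of the shape
\[
\norm{\Op_1(a)\Op_1(b)-\Op_1(ab)}\;\le\; C_d\sum_{1\le|\alpha|,|\beta|\le K_d}\sup|\partial^\alpha a|\,\sup|\partial^\beta b|.
\]
That is the correct estimate, and it is what the two corollaries that follow actually need.

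The gap is in your last paragraph, and it cannot be fixed: the inequality exactly as printed --- with the right-hand side controlled only by $\sup_{T^*\T^d}|\partial^\alpha D(a,b)|$ for $\alpha\in\IN^{2d}$ --- is \emph{false}. Take $b=a$. Then $D(a,a)=\{a,a\}=0$, hence $\partial^\alpha D(a,a)\equiv 0$ for all $\alpha$, and the stated bound would force $\Op_1(a)^2=\Op_1(a^2)$. But for, say, $a(x,\xi)=\cos x\,\chi(\xi)$ on $T^*\T^1$ with $\chi\in C_c^\infty$ nontrivial, the second Moyal term
\[
-\tfrac18\bigl(\partial_\xi^2 a\,\partial_x^2 a-2(\partial_x\partial_\xi a)^2+\partial_x^2 a\,\partial_\xi^2 a\bigr)
\]
does not vanish, so $\Op_1(a)^2\neq\Op_1(a^2)$. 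Your attempt to ``keep the antisymmetric combination together'' cannot succeed because the obstruction is genuine: restricting $(\partial_x\partial_\eta-\partial_y\partial_\xi)(a\otimes b)$ to the diagonal kills information that the remainder still sees. The honest right-hand side is either the product form above, or seminorms of $(\partial_x\partial_\eta-\partial_y\partial_\xi)(a\otimes b)$ as a function on the \emph{doubled} phase space, before restriction. Stop your argument at the third paragraph and record that bound; it is what the paper uses downstream.
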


We finally deduce the following corollary. We use the notations of
Section \ref{s:second}.

\begin{corollary}
Let $a\in C^{\infty}(\IT^{d}\times\IR^{d})$ have uniformly bounded
derivatives, and let $\chi\in C_{c}^{\infty}(\IR^{d})$ be a
nonnegative cut-off function such that $\sqrt{\chi}$ is smooth.
Let $0<h<1$ and $R>1$. Denote
\[
a_{R}(x,\xi)=a(x,\xi)\chi\left(  \frac{P_{\Lambda}\xi}{hR}\right)
.
\]
Assume that $a>0$, and denote $b_{R}=\sqrt{a_{R}},$ Then
\[
\norm{\Op_h(a_R)-\Op_h(b_R)^2}_{L^{2}(\IT^{d})\To L^{2}(\IT^{d})}%
=\cO(h)+\cO(R^{-1})
\]
in the limits $h\To0$ and $R\To+\infty.$
\end{corollary}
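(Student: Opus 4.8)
The plan is to reduce the statement to a symbol estimate at scale one and then invoke the Calder\'on--Vaillancourt theorem. Using the scaling relation $\Op_{h}(c)=\Op_{1}(c(\cdot,h\cdot))$ recalled at the beginning of this appendix, and the fact that $P_{\Lambda}$ is linear, set
\[
\tilde a_{R}(x,\xi):=a_{R}(x,h\xi)=a(x,h\xi)\,\chi\!\left(\frac{P_{\Lambda}\xi}{R}\right),\qquad \tilde b_{R}(x,\xi):=b_{R}(x,h\xi)=\sqrt{a(x,h\xi)}\,\sqrt{\chi}\!\left(\frac{P_{\Lambda}\xi}{R}\right);
\]
note that the factor $h$ in the argument of the cut-off has cancelled, so in the rescaled symbols the cut-off carries only the large parameter $R$. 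Since $a>0$, the function $\sqrt{a}$ is smooth (we assume, as is the case in our applications, that it has uniformly bounded derivatives), and $\sqrt{\chi}$ is smooth by hypothesis, so $\tilde b_{R}$ is a legitimate symbol with $\tilde b_{R}^{2}=\tilde a_{R}$. Therefore $\Op_{h}(a_{R})=\Op_{1}(\tilde a_{R})$, $\Op_{h}(b_{R})^{2}=\Op_{1}(\tilde b_{R})^{2}=\Op_{1}(\tilde b_{R}\sharp\tilde b_{R})$, and
\[
\Op_{h}(a_{R})-\Op_{h}(b_{R})^{2}=\Op_{1}\!\left(\tilde b_{R}^{2}-\tilde b_{R}\sharp\tilde b_{R}\right).
\]

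Next I would estimate the symbol $e_{R}:=\tilde b_{R}\sharp\tilde b_{R}-\tilde b_{R}^{2}$ together with its derivatives up to order $K_{d}$. By the product formula and composition estimate recalled just above, the leading contribution to $e_{R}$ is a constant multiple of the Poisson bracket $\{\tilde b_{R},\tilde b_{R}\}$, which vanishes identically; consequently $e_{R}$ is governed by the second and higher order terms of the Moyal expansion, that is, by expressions built from products of \emph{two} derivatives of $\tilde b_{R}$ with \emph{two} derivatives of $\tilde b_{R}$, of the schematic form $\partial_{\xi}^{2}\tilde b_{R}\cdot\partial_{x}^{2}\tilde b_{R}$ and $\partial_{x}\partial_{\xi}\tilde b_{R}\cdot\partial_{x}\partial_{\xi}\tilde b_{R}$, the full remainder being handled via the exact integral form of the product formula rather than a formal series. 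The key observation is that a derivative $\partial_{x_{j}}$ of $\tilde b_{R}$ is $\cO(1)$, while a derivative $\partial_{\xi_{j}}$ of $\tilde b_{R}$ either falls on $\sqrt{a(x,h\xi)}$, producing a factor $h$, or on $\sqrt{\chi}(P_{\Lambda}\xi/R)$, producing a factor $R^{-1}\le1$; hence $\partial_{\xi_{j}}\tilde b_{R}=\cO(h)+\cO(R^{-1})$, and more generally any monomial in which $\sqrt{\chi}$ is differentiated $p\ge1$ times is $\cO(R^{-p})+\cO(h)$. Since each term of $e_{R}$ carries two $\xi$-differentiations split between the two factors, one gets $e_{R}=\cO(R^{-2})+\cO(h)=\cO(R^{-1})+\cO(h)$; the same bookkeeping shows that each $\partial^{\alpha}e_{R}$ with $|\alpha|\le K_{d}$ satisfies $\partial^{\alpha}e_{R}=\cO(R^{-1})+\cO(h)$ uniformly on $T^{\ast}\IT^{d}$ (extra $x$-derivatives cost nothing, and extra $\xi$-derivatives only improve the powers of $R^{-1}$ or add further powers of $h$).

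Finally, the Calder\'on--Vaillancourt theorem applied to $\Op_{1}(e_{R})$ yields
\[
\norm{\Op_{h}(a_{R})-\Op_{h}(b_{R})^{2}}_{L^{2}(\IT^{d})\To L^{2}(\IT^{d})}=\norm{\Op_{1}(e_{R})}\le C_{d}\sum_{|\alpha|\le K_{d}}\sup_{T^{\ast}\IT^{d}}|\partial^{\alpha}e_{R}|=\cO(h)+\cO(R^{-1}),
\]
as claimed, for either order of the limits $h\To0$ and $R\To+\infty$. The step that requires care is the middle one: organizing the Moyal remainder so that both the gain $R^{-2}$ (rather than merely $R^{-1}$) coming from the two cut-off differentiations and the clean $\cO(h)$ coming from the argument $h\xi$ in $a$ are extracted simultaneously, and ensuring that the \emph{entire} remainder, not just the first terms of the formal expansion, is controlled through the integral form of the product formula.
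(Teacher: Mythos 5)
Your approach — rescale to $\Op_1$ via $\Op_h(c)=\Op_1(c(\cdot,h\cdot))$, observe that the $h$ drops out of the cut-off so that $\tilde b_R(x,\xi)=\sqrt{a(x,h\xi)}\,\sqrt{\chi}(P_\Lambda\xi/R)$, note that every $\xi$-derivative of $\tilde b_R$ is $\cO(h)+\cO(R^{-1})$, and feed this into the symbolic-calculus remainder bound — is exactly what the paper intends for this corollary, since the corollary is stated immediately after the composition Proposition with no separate proof.

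Two remarks, one of substance. First, your emphasis on the vanishing of the Poisson bracket $\{\tilde b_R,\tilde b_R\}$ and on extracting a gain of $R^{-2}$ from \emph{two} $\xi$-differentiations is more than is needed (and not quite cleanly phrased, since $(\cO(h)+\cO(R^{-1}))^2$ is $\cO(h^2)+\cO(hR^{-1})+\cO(R^{-2})$, not $\cO(h)+\cO(R^{-2})$): the claimed bound $\cO(h)+\cO(R^{-1})$ already follows from the fact that \emph{every} term in the remainder $\tilde b_R\sharp\tilde b_R-\tilde b_R^2$ carries at least one $\xi$-derivative of $\tilde b_R$. This is precisely the content of the $D(a,b)$ in the Proposition preceding the corollary. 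Noting the vanishing of the bracket is actually \emph{unavoidable} if one wants to apply that Proposition literally with $a=b=\tilde b_R$ (since then $D(\tilde b_R,\tilde b_R)\equiv0$ on the diagonal, and the stated bound degenerates); so your retreat to the exact integral form of $a\sharp b$ is the right repair, but you should make explicit that the standard Calder\'on–Vaillancourt control of the integral remainder produces a sum of sup-norms of terms each containing at least one $\partial_\xi\tilde b_R$ or $\partial_\eta\tilde b_R$, which is all that is used. Second, as you flag, $a>0$ does not by itself give $\sqrt{a}\in C^\infty$ with bounded derivatives unless $a$ is bounded below on the (compact) support of the cut-off; this is indeed automatic in the paper's applications (there $a$ is built from $1-\chi$ and is bounded away from $0$ on the relevant set), but the caveat is worth keeping.
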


\begin{corollary}
\label{CoroCV} Let $a\in
C^{\infty}(\IT^{d}\times\IR^{d}\times\IR^{d})$, $0$-homogeneous in
the third variable outside a compact set, with uniformly bounded
derivatives, and let $\chi\in C_{c}^{\infty}(\IR^{d})$ be a
nonnegative cut-off function such that $\sqrt{\chi}$ is smooth.
Let $0<h<1$ and $R>1$. Denote
\[
a^{R}(x,\xi)=a\left(  x,\xi,\frac{P_{\Lambda}\xi}{h}\right)
\left( 1-\chi\left(  \frac{P_{\Lambda}\xi}{hR}\right)  \right)  .
\]
Assume that $a>0$, and denote $b^{R}=\sqrt{a^{R}}.$ Then
\[
\norm{\Op_h(a^R)-\Op_h(b^R)^2}_{L^{2}(\IT^{d})\To
L^{2}(\IT^{d})}=\cO(R^{-1})
\]
in the limits $h\To0$ and $R\To+\infty.$
\end{corollary}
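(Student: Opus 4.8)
The plan is to run exactly the argument of the preceding corollary, the one genuinely new ingredient being the $0$-homogeneity of $a$ in the variable $\eta\in\la\Lambda\ra$. The first step is to observe that, once $R$ exceeds the homogeneity threshold $R_{0}$ of $a$, the support of $1-\chi(P_{\Lambda}\xi/(hR))$ is contained in $\{\,|P_{\Lambda}\xi|>R_{0}h\,\}$, so that on that support
\[
a\Big(x,\xi,\tfrac{P_{\Lambda}\xi}{h}\Big)=a_{\mathrm{hom}}\Big(x,\xi,\tfrac{P_{\Lambda}\xi}{|P_{\Lambda}\xi|}\Big),
\]
which does not depend on $h$ at all. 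Consequently
\[
b^{R}(x,\xi)=\sqrt{a_{\mathrm{hom}}\Big(x,\xi,\tfrac{P_{\Lambda}\xi}{|P_{\Lambda}\xi|}\Big)}\;\sqrt{1-\chi\Big(\tfrac{P_{\Lambda}\xi}{hR}\Big)},
\]
and, using the hypothesis that $\sqrt{\chi}$ (and likewise $\sqrt{1-\chi}$, which one arranges by choosing $\chi$ appropriately) is smooth, together with $a>0$, both factors are smooth with uniformly bounded derivatives, so $b^{R}$ is an honest symbol. This first step --- checking that the square root is uniformly under control, and in particular that it does not reintroduce an $h$-dependent blow-up in the transition zone $|P_{\Lambda}\xi|\sim hR$ --- is the point I would treat most carefully; the factorization above is exactly what avoids the trouble.

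Next, following the appendix, I would pass to the unscaled quantization via $\Op_{h}(c(x,\xi))=\Op_{1}(c(x,h\xi))$ and set $g(x,\xi):=b^{R}(x,h\xi)$, so that
\[
\Op_{h}(a^{R})-\Op_{h}(b^{R})^{2}=\Op_{1}(g^{2})-\Op_{1}(g)^{2}.
\]
After this rescaling the conormal variable becomes $P_{\Lambda}\xi/|P_{\Lambda}\xi|$ (the $h$'s cancel) and on the support of $g$ one has $|P_{\Lambda}\xi|\gtrsim R$. A $\xi$-derivative falling on $P_{\Lambda}\xi/|P_{\Lambda}\xi|$ or on $\chi(P_{\Lambda}\xi/R)$ costs a factor $\cO(1/R)$, one falling on the (now harmless) $h\xi$-slot costs $\cO(h)$, and an untouched slot leaves an $\cO(1)$ factor; hence, uniformly in $h$ and $R$,
\[
\big|\partial_{x}^{\beta}\partial_{\xi}^{\gamma}g\big|=\cO\big((h+R^{-1})^{|\gamma|}\big)\quad(|\gamma|\ge 1),\qquad \big|\partial_{x}^{\beta}g\big|=\cO(1).
\]
This is exactly the derivative bound used in the preceding corollary, with the crucial improvement that the $\cO(h)$ term that arose there from the $\xi$-dependence of $a$ itself has been replaced by the smaller $\cO(R^{-1})$ thanks to homogeneity --- which is the structural reason no $\cO(h)$ term survives in the present statement.

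Finally I would invoke the composition formula $\Op_{1}(g)\circ\Op_{1}(g)=\Op_{1}(g\sharp g)$ recalled in the appendix. The term of order zero in $g\sharp g$ is $g^{2}$ and the term of order one is proportional to the Poisson bracket $\{g,g\}$, which vanishes identically because the two factors coincide; thus $g\sharp g-g^{2}$, and all of its derivatives up to the order required by the Calder\'on--Vaillancourt theorem, carry at least two $\xi$-derivatives distributed over the two copies of $g$, hence are $\cO\big((h+R^{-1})^{2}\big)$ by the previous step (the oscillatory-integral remainder being estimated in the standard way for symbols with uniformly bounded derivatives). The Calder\'on--Vaillancourt theorem then gives
\[
\big\|\Op_{1}(g\sharp g-g^{2})\big\|_{L^{2}(\IT^{d})\To L^{2}(\IT^{d})}=\cO\big((h+R^{-1})^{2}\big),
\]
which in the limits $h\To0$ and then $R\To+\infty$ is $\cO(R^{-1})$, as claimed. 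The bulk of the work is thus the uniform control of the square root $b^{R}$; once the factorization of the first step is in place, the rest is the routine pseudodifferential bookkeeping copied from the preceding corollary.
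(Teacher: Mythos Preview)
Your argument is correct and is precisely the proof the appendix is leaving to the reader; the paper states both this corollary and the preceding one without proof, as direct consequences of the composition estimate and the Calder\'on--Vaillancourt theorem, so there is no alternative approach to compare against. Two minor remarks: the observation $\{g,g\}=0$ is a pleasant sharpening but not strictly needed, since already one $\partial_\xi$ in the Moyal remainder gives $\cO(h+R^{-1})$, which is $\cO(R^{-1})$ after $\limsup_{h\to 0}$; and, as you correctly flag, it is the smoothness of $\sqrt{1-\chi}$ (not $\sqrt{\chi}$) that is actually used here---a harmless slip in the hypothesis as stated.
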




\begin{thebibliography}{10}

\bibitem{Aissiou}
Tayeb A{\"i}ssiou.
\newblock Semiclassical limits of eigenfunctions on flat $n$-dimensional tori.
\newblock {\em Canad. Math. Bull.}, 2011.
\newblock To appear.

\bibitem{AFM}
Nalini Anantharaman, Clotilde Fermanian-Kammerer and Fabricio
Maci\`{a}.
\newblock In preparation.

\bibitem{AnantharamanMaciaSurv}
Nalini Anantharaman and Fabricio Maci\`{a}.
\newblock The dynamics of the {S}chr\"odinger flow from the point of view of
  semiclassical measures.
\newblock 2011.
\newblock Preprint arXiv:1102.0970v1.

\bibitem{BLR}
Claude Bardos, Gilles Lebeau, and Jeffrey Rauch.
\newblock Sharp sufficient conditions for the observation, control, and
  stabilization of waves from the boundary.
\newblock {\em SIAM J. Control Optim.}, 30(5):1024--1065, 1992.

\bibitem{BourgainQL97}
Jean Bourgain.
\newblock Analysis results and problems related to lattice points on surfaces.
\newblock In {\em Harmonic analysis and nonlinear differential equations
  ({R}iverside, {CA}, 1995)}, volume 208 of {\em Contemp. Math.}, pages
  85--109. Amer. Math. Soc., Providence, RI, 1997.

\bibitem{BurqZworski03Billiards}
Nicolas Burq and Maciej Zworski.
\newblock Eigenfunctions for partially rectangular billiards.
\newblock 2003.
\newblock Unpublished, arXiv:math/0312098v1.

\bibitem{BurqZworski11}
Nicolas Burq and Maciej Zworski.
\newblock Control for schroedinger operators on tori.
\newblock 2011.
\newblock Preprint, arXiv:1106.1412v1.

\bibitem{CookeTorus2d}
Roger Cooke.
\newblock A {C}antor-{L}ebesgue theorem in two dimensions.
\newblock {\em Proc. Amer. Math. Soc.}, 30:547--550, 1971.

\bibitem{DimassiSjostrand}
Mouez Dimassi and Johannes Sj{\"o}strand.
\newblock {\em Spectral asymptotics in the semi-classical limit}, volume 268 of
  {\em London Mathematical Society Lecture Note Series}.
\newblock Cambridge University Press, Cambridge, 1999.

\bibitem{FermanianShocks}
C.~Fermanian~Kammerer.
\newblock Propagation and absorption of concentration effects near shock
  hypersurfaces for the heat equation.
\newblock {\em Asymptot. Anal.}, 24(2):107--141, 2000.

\bibitem{Fermanian2micro}
Clotilde Fermanian-Kammerer.
\newblock Mesures semi-classiques 2-microlocales.
\newblock {\em C. R. Acad. Sci. Paris S\'er. I Math.}, 331(7):515--518, 2000.

\bibitem{FermanianGerardCroisements}
Clotilde Fermanian-Kammerer and Patrick G{\'e}rard.
\newblock Mesures semi-classiques et croisement de modes.
\newblock {\em Bull. Soc. Math. France}, 130(1):123--168, 2002.

\bibitem{GerardMesuresSemi91}
P.~G{\'e}rard.
\newblock Mesures semi-classiques et ondes de {B}loch.
\newblock In {\em S\'eminaire sur les \'{E}quations aux {D}\'eriv\'ees
  {P}artielles, 1990--1991}, pages Exp.\ No.\ XVI, 19. \'Ecole Polytech.,
  Palaiseau, 1991.

\bibitem{GerardMDM91}
Patrick G{\'e}rard.
\newblock Microlocal defect measures.
\newblock {\em Comm. Partial Differential Equations}, 16(11):1761--1794, 1991.

\bibitem{GerLeich93}
Patrick G{\'e}rard and {\'E}ric Leichtnam.
\newblock Ergodic properties of eigenfunctions for the {D}irichlet problem.
\newblock {\em Duke Math. J.}, 71(2):559--607, 1993.

\bibitem{JaffardPlaques}
S.~Jaffard.
\newblock Contr\^ole interne exact des vibrations d'une plaque rectangulaire.
\newblock {\em Portugal. Math.}, 47(4):423--429, 1990.

\bibitem{JakobsonTori97}
Dmitry Jakobson.
\newblock Quantum limits on flat tori.
\newblock {\em Ann. of Math. (2)}, 145(2):235--266, 1997.

\bibitem{LionsSurvey88}
J.-L. Lions.
\newblock Exact controllability, stabilization and perturbations for
  distributed systems.
\newblock {\em SIAM Rev.}, 30(1):1--68, 1988.

\bibitem{MaciaAv}
Fabricio Maci{\`a}.
\newblock Semiclassical measures and the {S}chr\"odinger flow on {R}iemannian
  manifolds.
\newblock {\em Nonlinearity}, 22(5):1003--1020, 2009.

\bibitem{MaciaTorus}
Fabricio Maci\`{a}.
\newblock High-frequency propagation for the {S}chr\"odinger equation on the
  torus.
\newblock {\em J. Funct. Anal.}, 258(3):933--955, 2010.

\bibitem{MaciaDispersion}
Fabricio Maci\`{a}.
\newblock The {S}chr\"odinger flow on a compact manifold: High-frequency
  dynamics and dispersion.
\newblock In {\em Modern Aspects of the Theory of Partial Differential
  Equations}, volume 216 of {\em Oper. Theory Adv. Appl.}, pages 275--289.
  Springer, Basel, 2011.

\bibitem{MarzuolaBilliards}
Jeremy Marzuola.
\newblock Eigenfunctions for partially rectangular billiards.
\newblock {\em Comm. Partial Differential Equations}, 31(4-6):775--790, 2006.

\bibitem{MillerThesis}
Luc Miller.
\newblock {\em Propagation d'ondes semi-classiques \`{a} travers une interface
  et mesures 2-microlocales}.
\newblock PhD thesis, \'Ecole Polytechnique, Palaiseau, 1996.

\bibitem{Mockenhaupt96}
G.~Mockenhaupt.
\newblock {\em Bounds in Lebesgue Spaces of Oscillatory Integral Operators}.
\newblock Habilitationsschrift. Univ. Siegen, Siegen, 1996.

\bibitem{JakNadToth01}
N.~Nadirashvili, J.~Toth, and D.~Jakobson.
\newblock Geometric properties of eigenfunctions.
\newblock {\em Russian Math. Surveys}, 56(6):1085--1105, 2001.

\bibitem{NierScat}
Francis Nier.
\newblock A semi-classical picture of quantum scattering.
\newblock {\em Ann. Sci. \'Ecole Norm. Sup. (4)}, 29(2):149--183, 1996.

\bibitem{Wunsch10}
Jared Wunsch.
\newblock Non-concentration of quasimodes for integrable systems.
\newblock 2010.
\newblock Preprint, arXiv:1008.4396v2.

\bibitem{Zygmund74}
A.~Zygmund.
\newblock On {F}ourier coefficients and transforms of functions of two
  variables.
\newblock {\em Studia Math.}, 50:189--201, 1974.

\end{thebibliography}

\def\cprime{$'$}

\end{document}